\newtheorem{Example*}{Examples}
\newtheorem{note}{Note}
\newtheorem{theorem}{Theorem}
\theoremstyle{plain}
\newtheorem{conjecture}{Conjecture}
\newtheorem{corollary}{Corollary}
\newtheorem{definition}{Definition}
\newtheorem{example}{Example}
\newtheorem*{examples}{Examples}
\newtheorem{lemma}{Lemma}
\newtheorem{proposition}{Proposition}
\newtheorem{remark}{Remark}
\numberwithin{equation}{section}
\newtheorem*{Remark}{Remarks}
\begin{document}

\title{ Mock Alexander Polynomials}
\author{Neslihan G{\"u}g{\"u}mc{\"u}}
\author{Louis H.Kauffman}

\address{Neslihan G{\"u}g{\"u}mc{\"u}:  Department of Mathematics, Izmir Institute of Technology, Urla Izmir 35430, Turkey}
\address{Louis H.Kauffman:Department of Mathematics, Statistics and Computer
Science, University of Illinois at Chicago, 851 South Morgan St., Chicago
IL 60607-7045, U.S.A.
}
\email{neslihangugumcu@iyte.edu.tr} \email{kauffman@math.uic.edu; loukau@gmail.com}

\begin{abstract}
In this paper, we construct mock Alexander polynomials for starred links and linkoids in surfaces. These polynomials are defined as specific sums over states of link or linkoid diagrams that satisfy $f=n$, where $f$ denotes the number of regions and $n$ denotes the number of crossings of diagrams.

\end{abstract}

\maketitle

\section{Introduction} \label{sec:introduction}
\subsection{Background and Motivation}

The Alexander polynomial was defined by J. W. Alexander in
his 1928 paper \cite{Alex}, as the determinant of a matrix associated with 
an oriented link diagram. Later in the paper, Alexander describes how the matrix whose determinant yields the polynomial is related to the fundamental group of the complement of the knot or link.  It is sufficient here to say that the Alexander matrix is a presentation of the abelianization of the commutator subgroup of the fundamental group as a module over the group ring of the integers. Since Alexander's time, this relationship with the fundamental group has been understood very well.
For more information about this point of view, see  \cite{Fox1,Fox2, Rolfsen, OnKnots}. 

Alexander's notation for generating the matrix associated to an oriented link diagram is as follows. At each crossing two dots are placed just to the left of the undercrossing arc, one before and one after the overcrossing arc at the crossing. Here one views the crossing so that the under-crossing arc is vertical and the over-crossing arc is horizontal. See Figure~\ref{Figure 1}. 

Four regions meet locally at a given crossing. Letting the regions be labeled by A, B, C, D, as shown in Figure \ref{Figure 1}.,Alexander associates the 
equation $$xA - xB + C - D = 0$$ to that crossing. Here $A,B,C,D$ proceed cyclically around the crossing, starting at the top dot. In this way the two
regions containing the dots give rise to the two occurrences of $x$ in the equation. If some of the regions are the same at the crossing, then the equation is
simplified by that equality. For example, if $A=D$ then the equation becomes $xA - xB + C -A = 0.$ Each crossing in a diagram $K$ gives an equation 
involving the regions of the diagram. Alexander associates a matrix $M_{K}$ whose rows correspond to  crossings of the diagram, and whose columns correspond
to regions of the diagram. Each nodal equation gives rise to one row of the matrix where the entry for a given column is the coefficient of that 
column (understood as designating a region in the diagram) in the given equation. If $R$ and $R'$ are adjacent regions, let $M_{K}[R,R']$ denote the
matrix obtained by deleting the corresponding columns of $M_{K}.$ Finally, define the {\em Alexander polynomial} $\Delta_{K}(x)$ by the formula
$$\Delta_{K}(x) \dot{=} Det(M_{K}[R,R']).$$

 \begin{figure}
     \begin{center}
     \begin{tabular}{c}
     \includegraphics[width=5cm]{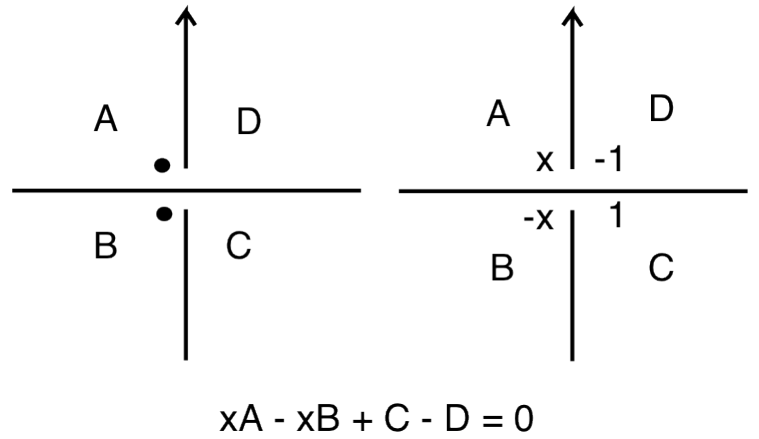}
     \end{tabular}
     \caption{Alexander Labeling.}
     \label{Figure 1}
\end{center}
\end{figure}

\noindent The notation $ A \dot{=} B$ means that $A = \pm x^{n}B$ for some integer $n.$ Alexander proves that his polynomial is well-defined, independent of
the choice of adjacent regions and invariant under the Reidemeister moves  up to $\dot{=}$. In Figure~\ref{Figure 2} we show  the calculation of the Alexander polynomial of the trefoil knot using this method.

 \begin{figure}
     \begin{center}
     \begin{tabular}{c}
     \includegraphics[width=5cm]{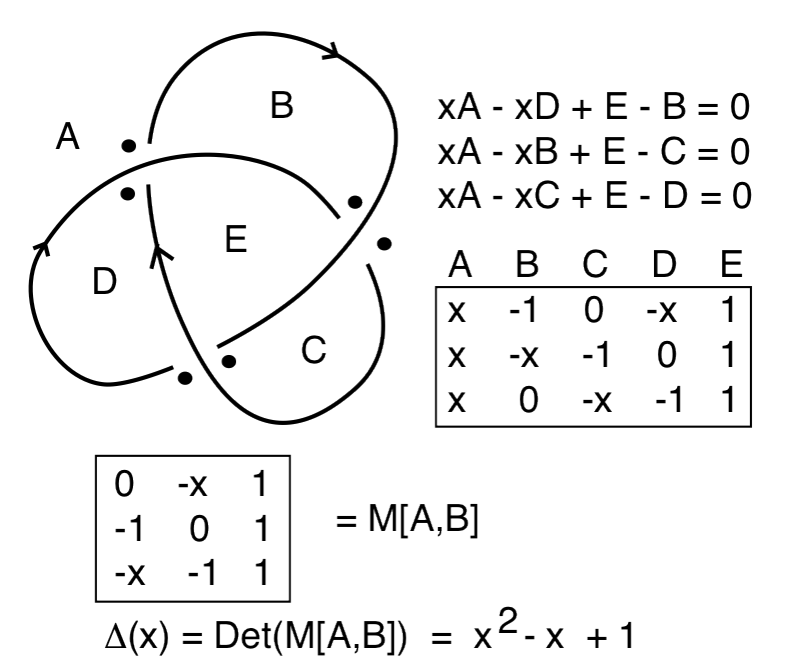}
     \end{tabular}
     \caption{The Alexander Polynomial. }
     \label{Figure 2}
\end{center}
\end{figure}

\noindent In this figure we show the diagram of the knot, the labelings and the resulting full matrix and the square matrix resulting from deleting
two columns corresponding to a choice of adjacent regions. Computing the determinant, we find that the the Alexander polynomial of the trefoil knot is given
by the equation $\Delta \dot{=} x^2 - x + 1.$ 

In 1983, the second author gave a reformulation of the Alexander polynomial as a state sum in his book titled Formal Knot Theory \cite{FKT}. The state sum reformulation  is based on combinatorial configurations on a knot or link diagram that are directly related to the expansion of the determinant that defines the Alexander 
polynomial. Here is a brief description of this reformulation.

Given an $n \times n$  square matrix $M =[M_{ij}]$, we consider the expansion formula for the determinant of $M:$
\\

$$Det(M) = \sum_{\sigma \in S_{n}}sgn(\sigma) \prod_{i=1}^{n} M_{i\sigma(i)}.$$
\\

Here the sum runs over all permutations of the index set $\{ 1,2, \ldots ,n \}$ and $sgn(\sigma)$ denotes the sign of a given permutation $\sigma.$
In terms of the matrix, each product corresponds to a {\em choice} by each column of a single row such that each row is chosen exactly once. The order of 
rows chosen by the columns (taken in standard order) gives the permutation whose sign is calculated.

Consider our description of Alexander's determinant as given above. Each crossing is labeled with Alexander's dots so that we know that 
the four local quadrants at a crossing are each labeled with $x$, $-x$, $1$ or $-1.$ The matrix has one row for each crossing and one column for each region.
Two columns corresponding to adjacent regions $A$ and $B$ are deleted from the full matrix to form the matrix $M[A,B]$, and we have the Alexander polynomial
$\Delta_{K}(X) \dot{=} Det(M[A,B]).$

In the Alexander determinant expansion the {\em choice} of a row by a column corresponds to {\em a region choosing a crossing} in the link diagram.
The only crossings that a region can choose giving a non-zero term in the determinant are the crossings in the boundary of the given region. Thus the terms in the
expansion of $Det(M[A,B])$ are in one-to-one correspondence with decorations of the flattened link diagram (i.e. we ignore the over and under crossing
structure) where each region (other than the two deleted regions corresponding to the two deleted columns in the matrix) labels one of its crossings. We call
these  labeled flat diagrams the {\em states} of the original link diagram. See Figure~\ref{Figure 3} for a list of the states of the trefoil knot.
In this figure we show the states and the corresponding matrix forms with columns choosing rows that correspond to each state.
 \begin{figure}[H]
     \begin{center}\includegraphics[width=.5\textwidth]{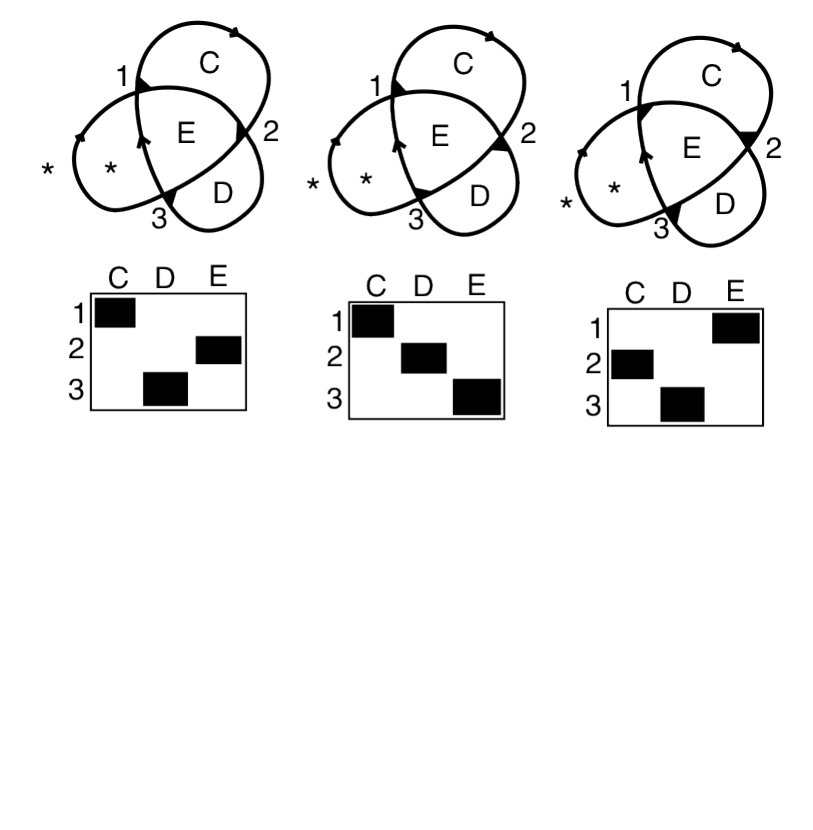}
     \vspace{-3cm}
  \caption{States with Markers}
     \label{Figure 3}
\end{center}
\end{figure}

At this point we have almost a full combinatorial description of Alexander's determinant. The only thing missing is the permutation signs.  Call a state marker a {\em black hole} if it labels a quadrant where both oriented segments point toward the vertex of the flattened link diagram. See Figure~\ref{Figure 4} for an illustration of  a black hole.
\begin{figure}[H]
     \begin{center}
     \begin{tabular}{c}
     \includegraphics[width=1.5cm]{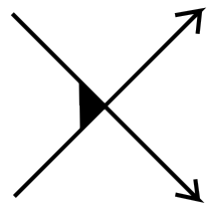}
     \end{tabular}
     \caption{A black hole.}
     \label{Figure 4}
\end{center}
\end{figure}
\bigbreak

\noindent Let $S$ be a state of the diagram K. Consider the parity $$(-1)^{b(S)}$$ where $b(S)$ is the number of black holes in
the state
$S$. Then it turns out that  up to one global sign $\epsilon$ depending on the ordering of nodes and regions, we have $$(-1)^{b(S)} = \epsilon \,\,
sgn(\sigma(S))$$ where 
$\sigma(S)$ is the permutation of crossings induced by the choice of ordering of the regions of the state. This gives a purely diagrammatic access to 
the sign of a state and allows us to write
$$\Delta_{K}(x) \dot{=} \sum_{S}<K|S>(-1)^{b(S)}$$ where $S$ runs over all states of the diagram for a given choice of deleted
adjacent regions, and $<K|S>$ denotes the product of the Alexander's crossing labels at the quadrants indicated by the state labels in the state $S.$ Here we show the contributions of
each state to a product of terms and in the polynomial we have followed the state summation by taking into account the number of black holes in each state. 
The most mysterious thing about
this state sum is the agreement of the permutation signs for the determinant with the black hole parity signs. The proof of this correspondence follows from the Clock Theorem
proved in Formal Knot Theory \cite{FKT}.
Thus we have a precise reformulation of the Alexander polynomial as a state summation.
\bigbreak

In Figure~\ref{Figure 5} we illustrate the calculation of the Alexander polynomial of the trefoil knot using this state summation.

 \begin{figure}[H]
     \begin{center}
     \begin{tabular}{c}
     \includegraphics[width=5cm]{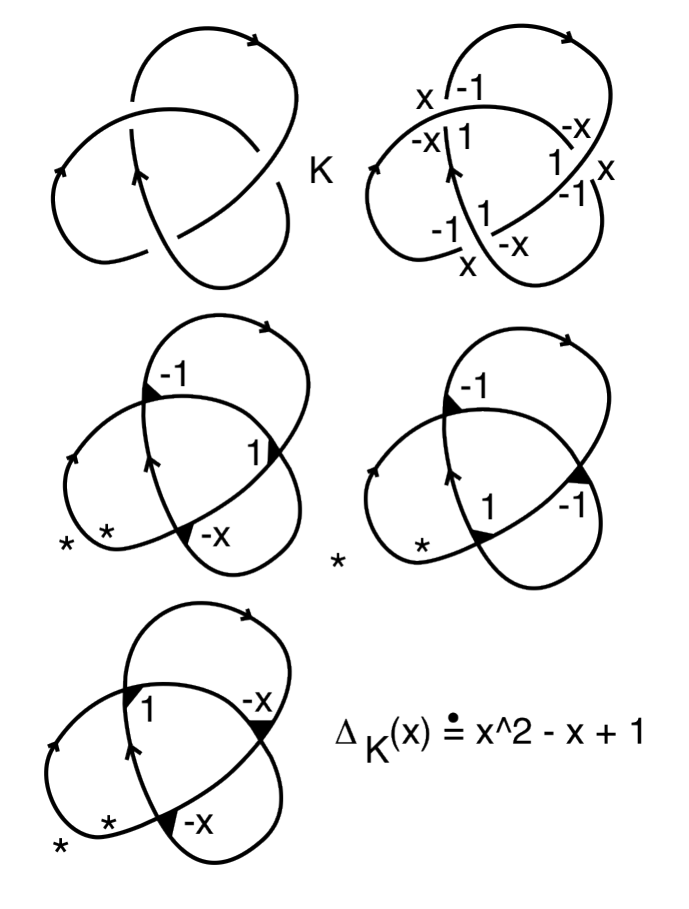}
     \end{tabular}
     \caption{State-sum calculation of Alexander polynomial}
     \label{Figure 5}
\end{center}
\end{figure}


\subsection{Alexander-Conway Polynomial}

We can change the vertex weights of the state sum to the form shown in Figure \ref{fig:WWintro}, letting $z = W - W^{-1}.$ This choice incorporates the sign for the black hole count directly into the weights and gives a direct state summation model for the Alexander-Conway polynomial of an oriented link diagram satisfying the Conway Skein relation
$\nabla_{K_{+}} - \nabla_{K_{-}} = z \nabla_{K_{0}}$ for any triple of oriented link diagrams $K_{+}, K_{-}, K_{0}$ that differ from each other at a unique crossing as shown in Figure \ref{fig:skeinintro}.

\begin{figure}[H]
\centering
\includegraphics[width=.5\textwidth]{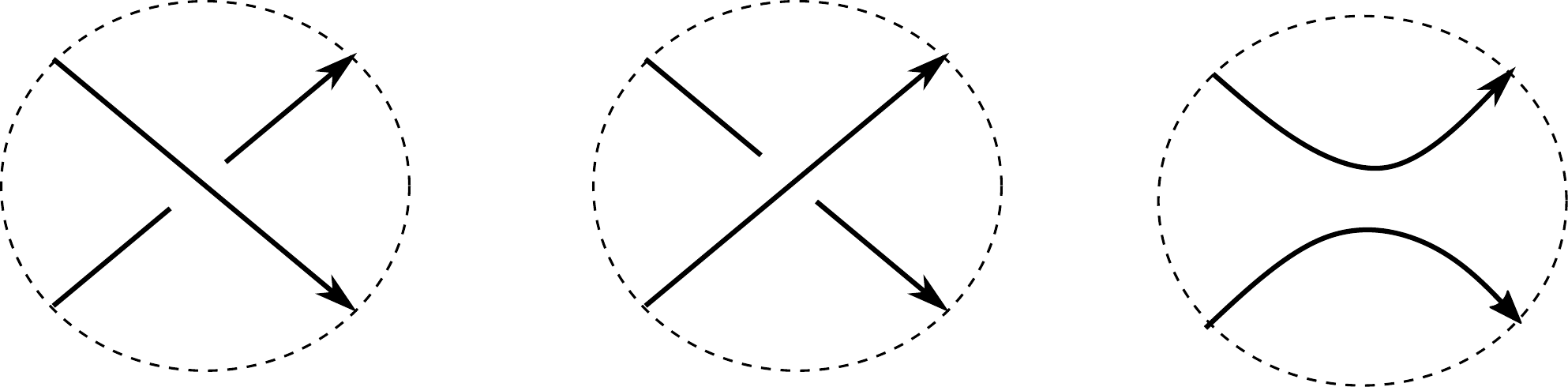}
\caption{ $K_{+}, K_{-}$   and $K_{0}$, respectively from left to right}
\label{fig:skeinintro}
\end{figure} 

 \begin{figure}[H]
     \begin{center}
     \begin{tabular}{c}
     \includegraphics[width=5cm]{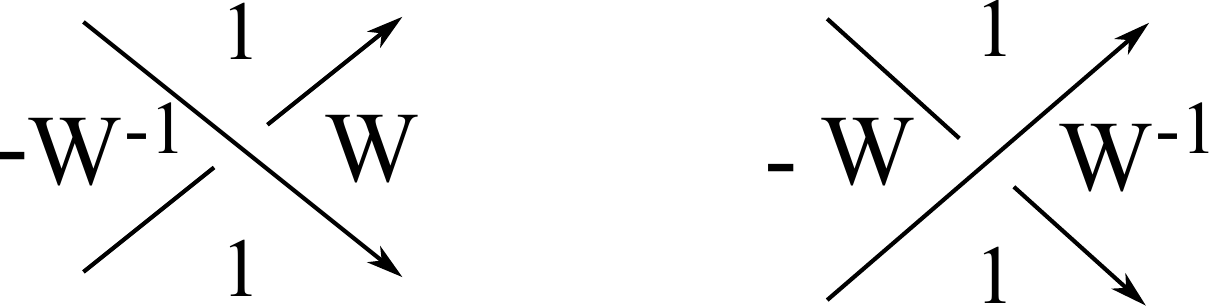}
     \end{tabular}
     \caption{New labeling at crossings}
     \label{fig:WWintro}
\end{center}
\end{figure}
 In fact, the Alexander-Conway polynomial of an oriented knot diagram is given as the permanent of the matrix associated to the knot diagram with this new labeling \cite{FKT}. The state-sum formula for the Alexander-Conway polynomial is
 
$$\nabla_{K}(W) = \sum_{S}<K|S>$$ where $S$ runs over all states of $K$.

Throughout this paper we shall (for invariant polynomials) use the vertex weights in Figure \ref{fig:WWintro} and we will discuss the many invariants that arise using the principle that there should be an equal number of regions  and crossings available for state markers so that each state is a one-to-one correspondence between available regions and crossings.
The key discovery of this paper is that this principle yields many new invariants beyond the original Alexander-Conway polynomial.

\subsection{Organization of the paper}
Now we briefly present the organization of the paper. Section \ref{sec:prem} covers preliminary notions required throughout the paper. In Section \ref{sec:decorated} we study admissable diagrams that have equal number of regions (faces in the underlying universe) and crossings (four-valent vertices in the underlying universe),  after a discussion in Section \ref{sec:Euler} on  the relationship between the number of regions and crossings of a link or a linkoid diagram that lies in a closed, connected and orientable (c.c.o.) surface.

 In Section \ref{sec:starred} we introduce starred link and linkoid diagrams that are admissable diagrams endowed with a star decoration either on their regions or crossings. Starred link and linkoid diagrams are considered up to star equivalence that is a restricted isotopy relation in the surface they lie that is defined far away from starred regions or crossings. 

In Section \ref{sec:state-sum} we generalize the ideas of Formal Knot Theory. We introduce a state-sum polynomial in variables $W, B$ for starred links and linkoids lying in a surface in Section \ref{sec:map}. In Section \ref{sec:permanent} we show that the state-sum polynomial admits a matrix permanent formulation. In Section \ref{sec:invariance} we exhibit the invariance condition on the variables  of the state-sum polynomial up to star equivalence, and obtain invariants of starred links or linkoids that we call  \textit{mock Alexander polynomials}. We also observe that the induced polynomial is an invariant of knotoids in $S^2$ when any knotoid diagram is considered to be endowed with a star at its region adjacent to its tail.  In Section \ref{sec:symmetry} we study the behavior of mock Alexander polynomials of knotoids in $S^2$ under certain symmetries such as reversion, mirror symmetry and starred region exchange. Section \ref{sec:skein} is devoted to te discussion of skein relations that mock Alexander polynomials satisfy. We show that the mock Alexander polynomial of a starred link in $S^2$ satisfies the skein relation at a non-separating crossing. We also show certain variations of the skein relation are satisfied by the mock Alexander polynomial of a starred linkoid in $S^2$.

 In Section \ref{sec:more} we  introduce  handle attaching techniques to obtain link diagrams in genus two surfaces that satisfy the equality $f= n$. We define the mock Alexander polynomial for admissable link diagrams that lie in a c c.o. surface of genus two that is obtained by the handle attaching techniques introduced. In this section, we also study the mock Alexander polynomial of  the virtual closure of a knotoid in $S^2$. 

Section \ref{sec:discussion} is a brief discussion of the states in terms of paths on the diagram, and sets the stage for a next paper, generalizing the Clock Theorem of the Formal Knot Theory. The appendix (Section \ref{sec:appendix}) derives the most general vertex weights and indicates a $2$-variable polynomial that we will explore in a subsequent paper.

\section{Linkoids and universes}\label{sec:prem}

We begin with necessary definitions.

\begin{definition}\normalfont 
A \textit{linkoid diagram} $L$ in a closed, connected, oriented surface of genus $g$  is an immersion of a number of unit intervals and circles in  the surface. The immersion $L$ has a finite number of double points, each of which is endowed with an under or over crossing information and called a \textit{crossing} of $L$. The points corresponding to $L(0)$ and  $L(1)$ are all distinct from each other and any crossing of $L$, and appear as \textit{endpoints} of long (arc) components of $L$. An endpoint  of a component that corresponds to $L(0)$ is called a \textit{tail} and to $L(1)$ is called a \textit{head} of $L$.
Each long component of $L$ is considered to be oriented from its tail to its head. 

A \textit{knotoid diagram} is a linkoid diagram that consists of only one long component and a \textit{knotoid multi-diagram} is a linkoid diagram that consists of a number of circular components and one long component. 
 \end{definition}
 Note that if every endpoint of every long component of a linkoid diagram $L$ is assumed to lie at the boundary of a $2$-disk then $L$ is a tangle. Moreover, a linkoid diagram consisting of only circular components is clearly a link. 

Throughout the paper, we assume the ambient manifolds that links and linkoids lie in as closed, connected, orientable surfaces.

\begin{definition}\normalfont
Two linkoid diagrams in a surface are \textit{equivalent} if they can be transformed to one other by a sequence of three Reidemeister moves RI, RII, RIII that take place away from the endpoints of diagrams and isotopy of the surface.  It is forbidden to pull/push an endpoint across an arc. A \textit{linkoid} is an equivalence class of linkoid diagrams given by the relation induced by Reidemeister moves. 
\end{definition}

\begin{definition}\normalfont
The \textit{universe} $U_L$ of a linkoid or link diagram $L$ in a surface is the graph that is obtained by replacing each crossing  of $L$ with a vertex. Each endpoint of $L$, if there is any, is also considered to be a vertex. Each semi-arc of $L$ is regarded as an edge of $U_L$, and a region of $L$ is a connected component of $\Sigma_g - U_{L}$.  That is, a region of $L$ corresponds to a face of $U_L$. 
 A region that is enclosed with vertices and edges is a \textit{bounded} region of $L$, otherwise it is called the \textit{exterior} region of $L$.

\end{definition}

 \begin{definition}\normalfont
 If each region of a link or linkoid diagram $L$  is homeomorphic to a disk then we say $L$ is a \textit{tight embedding} (or tightly embedded) in $\Sigma_g$. Note that every link or linkoid diagram in $S^2$  is a tight embedding. \end{definition}

\begin{definition}\normalfont
A linkoid or link diagram in a surface is called \textit{split} if its universe is disconnected.

\end{definition}
\begin{definition}\normalfont
A linkoid or link diagram in a surface is called \textit{connected} if it is not split. Equivalently, a linkoid or link diagram is \textit{connected} if its underlying universe is connected.
\end{definition}

\section{Admissible diagrams}\label{sec:decorated}
\subsection{Results from Euler's formula}\label{sec:Euler}

 We begin by characterizing the relation between the number of faces and the number of four-valent vertices of a link or linkoid universe that is tightly embedded in a surface.

\begin{proposition}\label{prop:euler}

Let $U$ be the universe of a connected link or linkoid diagram with $n$ crossings that is tightly embedded in a closed, connected and orientable surface $\Sigma_g$. We have the following.

\begin{enumerate}

\item If $U$ is the universe of a link diagram then $ f - n = 2 - 2g$, where $f$ is the number of faces of $U$. 

\item If $U$ is the universe of a linkoid diagram, then $f - n = 2 - 2g - m$, where $f$ is the number of faces of $U$, $m$ is the number of long components of the linkoid diagram. 
\end{enumerate}

\end{proposition}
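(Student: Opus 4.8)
The plan is to apply the Euler--Poincar\'e formula to the universe $U$ viewed as a CW-decomposition of $\Sigma_g$, and then simply bookkeep the numbers of vertices, edges and faces. Since $L$ is tightly embedded, every region of $U$ is homeomorphic to a disk, so $U$ determines a genuine cellular decomposition of $\Sigma_g$ and we may invoke $V - E + F = 2 - 2g$, where $V$, $E$, $F$ count the $0$-, $1$-, and $2$-cells respectively. Here $F = f$ in both cases.

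First I would count the $0$-cells. For a link diagram the only vertices of $U$ are the $n$ crossings, each of which is $4$-valent, so $V = n$. For a linkoid diagram $U$ has, in addition to the $n$ crossings, the endpoints of the long components as vertices; each long component contributes one tail and one head, i.e.\ two $1$-valent vertices, so $V = n + 2m$.

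Next I would count the $1$-cells via the degree-sum (handshake) lemma, $2E = \sum_{v}\deg(v)$. In the link case this gives $2E = 4n$, hence $E = 2n$; in the linkoid case $2E = 4n + 2m$, hence $E = 2n + m$ (note that a loop at a crossing, such as an $RI$ curl, contributes $2$ to that crossing's degree, which the lemma already accounts for). Substituting into $V - E + F = 2 - 2g$ then gives $n - 2n + f = 2 - 2g$, that is $f - n = 2 - 2g$, proving (1); and $(n + 2m) - (2n + m) + f = 2 - 2g$, that is $f - n = 2 - 2g - m$, proving (2).

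The only point requiring genuine care — and the one I expect to be the main (mild) obstacle — is the degenerate case in which a component of $L$ is a crossingless circle, where the ``graph'' $U$ has a $1$-cell but no incident $0$-cell. Connectivity of $U$ forces this to occur only when $L$ is a single circle with $n = 0$, and then the two claims are verified directly: such a circle in $S^2$ has $f = 2$, so $f - n = 2 = 2 - 2\cdot 0$, while for $g \geq 1$ no tight embedding of a single crossingless circle exists (one complementary region would fail to be a disk), so the hypotheses are vacuous there. After dispatching this case one may assume $U$ has at least one $0$-cell and the CW count above applies verbatim.
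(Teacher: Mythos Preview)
Your proof is correct and follows essentially the same route as the paper: apply Euler's formula $v-e+f=2-2g$ to the universe, count vertices as $n$ (resp.\ $n+2m$), count edges via the degree-sum identity as $2n$ (resp.\ $2n+m$), and substitute. The only difference is that you explicitly treat the degenerate crossingless-circle case, which the paper's proof does not mention; this is a small improvement in rigor but not a different method.
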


\begin{proof}
It is an easy observation that a connected $4$-regular graph with $n$ vertices has $2n$ edges connecting the vertices.  A connected linkoid universe $U$ is a connected graph with $n$ four-valent vertices and $2m$ pendant vertices where $2m$ is the number endpoints of the knotoid components of the linkoid. This gives us the following equality.
$$4n = 2e - 2m,$$ where $e$ is the number of edges of the universe $U_L$.

By Euler's formula, we know that for a connected graph that is tightly embedded in $\Sigma_g$, $g \geq 0$,
\begin{center}
$v - e + f = 2 - 2g$,

\end{center}
where $v$ is the number of vertices, $e$ is the number of edges, $f$ is the number of faces of the graph.

By using the two equalities above, we can conclude the following. 
If $U$ is the universe of a link diagram with $n$ crossings then $$n - 2n + f = 2-2g$$ and so $$f -n =2-2g.$$
If $U$ is the universe of a linkoid diagram with $n$ crossings and $m$ knotoid components then $$(n +2m ) -(2n +m) +f = 2-2g. $$ 
So, $$f- n~ =~2-2g -m.$$

.

\end{proof}
Proposition \ref{prop:euler} can be generalized to disconnected link or linkoid diagrams in $S^2$ or $\mathbb{R}^2$. 

\begin{proposition}\label{prop:eulerii}
Let $L$ be  a disconnected link or linkoid diagram in $S^2$ or $\mathbb{R}^2$ with $k>1$ disconnected components. 
Then we have the following.
\begin{enumerate}
\item If $L$ is a link diagram then $f - n=k+1$, where $f$ is the number of regions and $n$ is the number of crossings in $L$.
\item If $L$ is a linkoid diagram with $m$ knotoid components then $f - n = k -m+1$.

\end{enumerate}

\end{proposition}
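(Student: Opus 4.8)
The plan is to deduce this from Proposition~\ref{prop:euler} applied componentwise. First I would reduce to $S^2$: compactifying $\mathbb{R}^2$ to $S^2=\mathbb{R}^2\cup\{\infty\}$ places $\infty$ in the unbounded region of $L$ and changes none of the crossings, endpoints, long components, or regions of $L$, so the identities for $\mathbb{R}^2$ and $S^2$ are the same. I would also note that it suffices to prove the single formula $f-n=k-m+1$, where $m\ge 0$ is the total number of long components of $L$; part~(1) is the case $m=0$ and part~(2) is the general statement. (A purely circular connected component contributes $m_i=0$.) A quick route is to invoke the Euler relation $v-e+f=1+c$ for a graph with $c$ connected components embedded in $S^2$ applied to $U_L$, with $v=n+2m$ and $e=2n+m$; but I will instead give the self-contained inductive argument suggested by the remark preceding the proposition.

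The combinatorial core is a gluing lemma: if $D$ is a link or linkoid diagram in $S^2$ and $D'$ is a \emph{connected} link or linkoid diagram drawn inside a single region $R$ of $D$, then $f(D\cup D')=f(D)+f(D')-1$, where $f(D')$ is the number of regions of $D'$ regarded as a diagram in $S^2$. The point is that every region of $D$ other than $R$ survives, while $D'$ cuts $R$ into the $f(D')-1$ regions it encloses plus exactly one further region, formed by merging the ``outer'' region of $D'$ with the portion of $R$ lying outside $D'$; thus $R$ is replaced by $f(D')$ regions, with one region absorbed in the merge. I expect the only delicate point to be verifying that this merged piece is connected, i.e.\ that $D'$ can be enclosed in a disk $\Delta$ with $\overline{\Delta}\subset R$ whose complement $R\setminus\overline{\Delta}$ in the connected planar surface $R$ is connected and disjoint from $D'$; then the outer region of $D'$ within $\Delta$ joins $R\setminus\overline{\Delta}$ along $\partial\Delta$ while no other region of $D'$ does.

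With the lemma available, I would induct on $k$. Let $L_1,\dots,L_k$ be the connected components of $L$, with $n_i$ crossings, $m_i$ long components and $f_i$ regions in $S^2$. Since $L_k$ is connected and disjoint from $L':=L_1\cup\dots\cup L_{k-1}$, it lies in a single region of $L'$, so the lemma gives $f(L)=f(L')+f_k-1$, and by induction (the base case $k=1$ being trivial) $f(L)=\sum_{i=1}^k f_i-(k-1)$. Each $L_i$ is connected and hence tightly embedded in $S^2$, so Proposition~\ref{prop:euler} with $g=0$ (reading $m_i=0$ when $L_i$ is a link component) gives $f_i=n_i+2-m_i$. Summing, $f(L)=\sum_i(n_i+2-m_i)-(k-1)=n+2k-m-(k-1)=n+k-m+1$, i.e.\ $f-n=k-m+1$, which specializes to $f-n=k+1$ when $m=0$.
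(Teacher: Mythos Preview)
Your argument is correct, but it is not the paper's. The paper simply quotes the generalized Euler formula $v-e+f=1+k$ for a planar graph with $k$ connected components and substitutes $v=n$, $e=2n$ (link case) or $v=n+2m$, $e=2n+m$ (linkoid case); this is precisely the ``quick route'' you mention and then set aside. Your alternative---induction on $k$ via the gluing lemma $f(D\cup D')=f(D)+f(D')-1$, combined with Proposition~\ref{prop:euler} applied to each connected piece---is longer but genuinely self-contained: it reduces the disconnected statement to the connected one already established, rather than invoking a separate Euler identity. The cost is the need to justify the gluing lemma (your ``delicate point'' about enclosing $D'$ in a disk inside $R$), which is routine in $S^2$ but still needs to be said. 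Your unification of the two parts into the single formula $f-n=k-m+1$ with $m\ge 0$ is also a nice touch that the paper does not make explicit.
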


\begin{proof}

The generalized Euler formula for planar graphs is given as $$v - e + f = 1 +k,$$ where $k$ is the number of disconnected components of the graph. Let $U$ is the universe of a link diagram in $S^2$ (or $\mathbb{R}^2$) with $k$ disconnected components. We already know that $e = 2n$ for $U$, where $n$ is the total number of crossings in $L$. Then, $f-n = 1 +k$ follows directly by substituting the formula. Note that when $L$ is a knot diagram in $S^2$, $k=1$ and so we obtain the equality $f- n = 2$, given in Proposition \ref{prop:euler}.

 Let $L$ be a linkoid diagram with $n$ crossings,  $m$ knotoid components and $k$ disjoint components. Substituting $v= n+ 2m$, $e= 2n +m$ into the formula, we reach to the required equality $f-n = k - m +1$. 

\end{proof}
\begin{remark}\normalfont
We remind the reader that any tight embedding of a graph in a surface of genus  $g >0$ forces the graph to be connected \cite{Lando}. We will be working with links and linkoid diagrams that are tight embeddings except in Section \ref{sec:more}. 
\end{remark}

\begin{definition}\normalfont

A tightly embedded link or linkoid diagram is called \textit{admissible}, if $f=n$,  where $f$ denotes the number of faces and $n$ denotes the number of vertices of its universe.
\end{definition}

The following is deduced directly from Propositions \ref{prop:euler} and \ref{prop:eulerii}.

 \begin{corollary}\label{cor:admissablegen}
\begin{enumerate}[i.]

\item A link diagram that is tightly embedded in a closed, connected, oriented genus $g$ surface $\Sigma_{g}$ is admissible if and only if the link diagram is connected and  $g=1$. 

\item A  linkoid diagram $L$ is admissible if and only if it lies in $S^2$ and the total number of knotoid components in $L$ is one more than the number of its disjoint components. See Figure \ref{fig:comp} where we picture an admissible linkoid diagram  in $S^2$ with two disjoint components and three knotoid components. 

\end{enumerate}

\end{corollary}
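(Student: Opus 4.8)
The plan is to argue by a short case analysis on the genus $g$ of the ambient surface and on whether the diagram is connected: in each case one reads the value of $f-n$ off Proposition \ref{prop:euler} (connected case) or Proposition \ref{prop:eulerii} (disconnected case, which by the remark recalled above can only occur when $g=0$), and then imposes the defining equation $f=n$ of admissibility.

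For part (i) I would first take the link diagram to be connected, where Proposition \ref{prop:euler}(1) gives $f-n = 2-2g$; this vanishes exactly when $g=1$, which already yields one implication and, read backwards, the other. It then remains to rule out the disconnected possibilities: a disconnected tight embedding forces $g=0$, and Proposition \ref{prop:eulerii}(1) gives $f-n = k+1\ge 3>0$ for $k>1$ disjoint components, while a connected link diagram in $S^2$ has $f-n=2\neq 0$. Assembling these cases yields ``admissible $\iff$ connected and $g=1$'' (connectedness being automatic in genus one, but harmless to state).

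For part (ii) the diagram has $m\ge 1$ long (knotoid) components. If $g\ge 1$ the embedding is connected, so Proposition \ref{prop:euler}(2) gives $f-n = 2-2g-m\le -1<0$, which excludes admissibility; hence an admissible linkoid diagram must lie in $S^2$. In $S^2$ I would then use Proposition \ref{prop:euler}(2), giving $f-n=2-m$ in the connected case ($k=1$), and Proposition \ref{prop:eulerii}(2), giving $f-n = k-m+1$ in the disconnected case ($k>1$); in both cases $f=n$ is equivalent to $m=k+1$, the asserted condition. The whole argument is essentially bookkeeping subject to the constraints $g\ge 0$, $k\ge 1$, $m\ge 1$; I expect the only point requiring care is keeping the case split exhaustive — in particular invoking the remark that positive-genus tight embeddings are automatically connected, so that no separate ``disconnected, positive genus'' case can arise.
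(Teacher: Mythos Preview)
Your proposal is correct and takes essentially the same approach as the paper, which simply asserts that the corollary ``is deduced directly from Propositions~\ref{prop:euler} and~\ref{prop:eulerii}'' without spelling out the case analysis. You have made explicit precisely the bookkeeping the paper leaves implicit, including the crucial use of the remark that a tight embedding in positive genus is automatically connected.
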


   \begin{figure}[H]
\centering
\includegraphics[width=.3\textwidth]{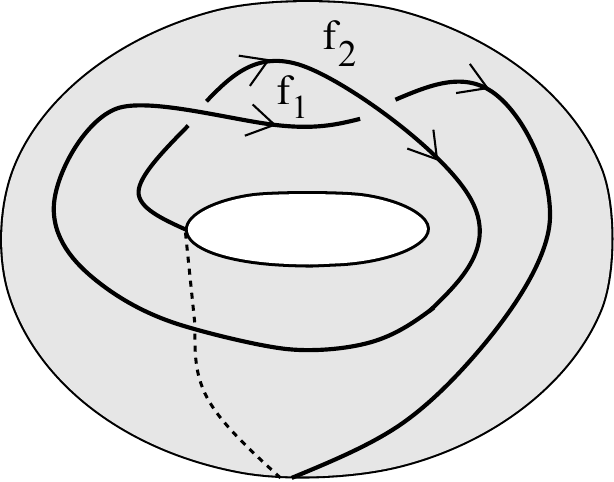}
\caption{An admissible knot diagram in torus with two crossings and regions.}
\label{fig:tor}
\end{figure}

   \begin{figure}[H]
\centering
\includegraphics[width=.35\textwidth]{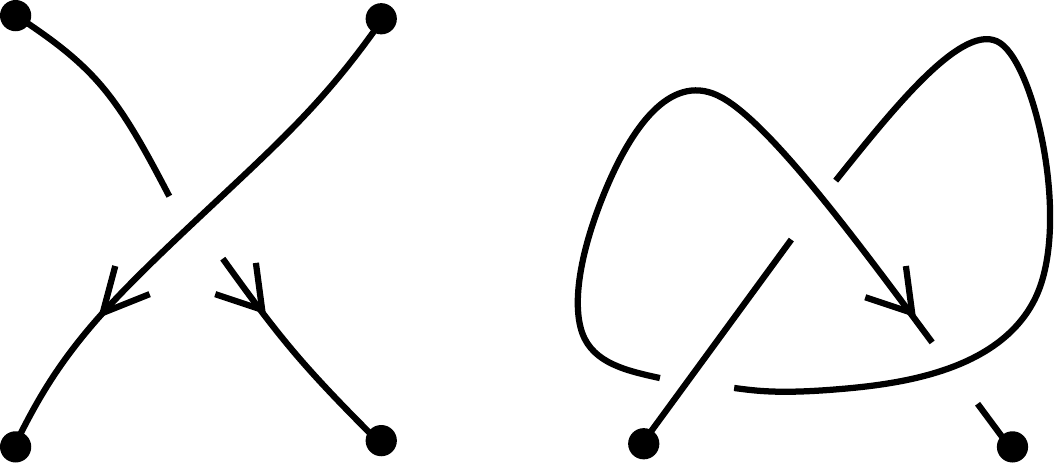}
\caption{An admissible disconnected linkoid diagram with two disjoint components, consisting of three knotoids.}
\label{fig:comp}
\end{figure}

An RII move that takes place between two disjoint components of a split linkoid diagram with a number of disjoint components clearly increases or decreases the number of disjoint components of a split linkoid diagram in $S^2$ by one. This makes it possible that a sequence of RII moves turns a non-admissible split linkoid diagram into an admissible linkoid diagram. Here we study the case of split linkoid diagrams consisting of only two disjoint knotoid components. By Corollary \ref{cor:admissablegen} ii., it is true that one can always obtain an admissible linkoid diagram from a split linkoid diagram that consists of only two knotoid components by connecting the components by an RII move. We illlustrate examples of this in Figures \ref{fig:splitintoadmissible}  and \ref{fig:eqv} where disjoint components of  split linkoid diagrams in $S^2$ are connected by RII moves.     



   \begin{figure}[H]
\centering
\includegraphics[width=.4\textwidth]{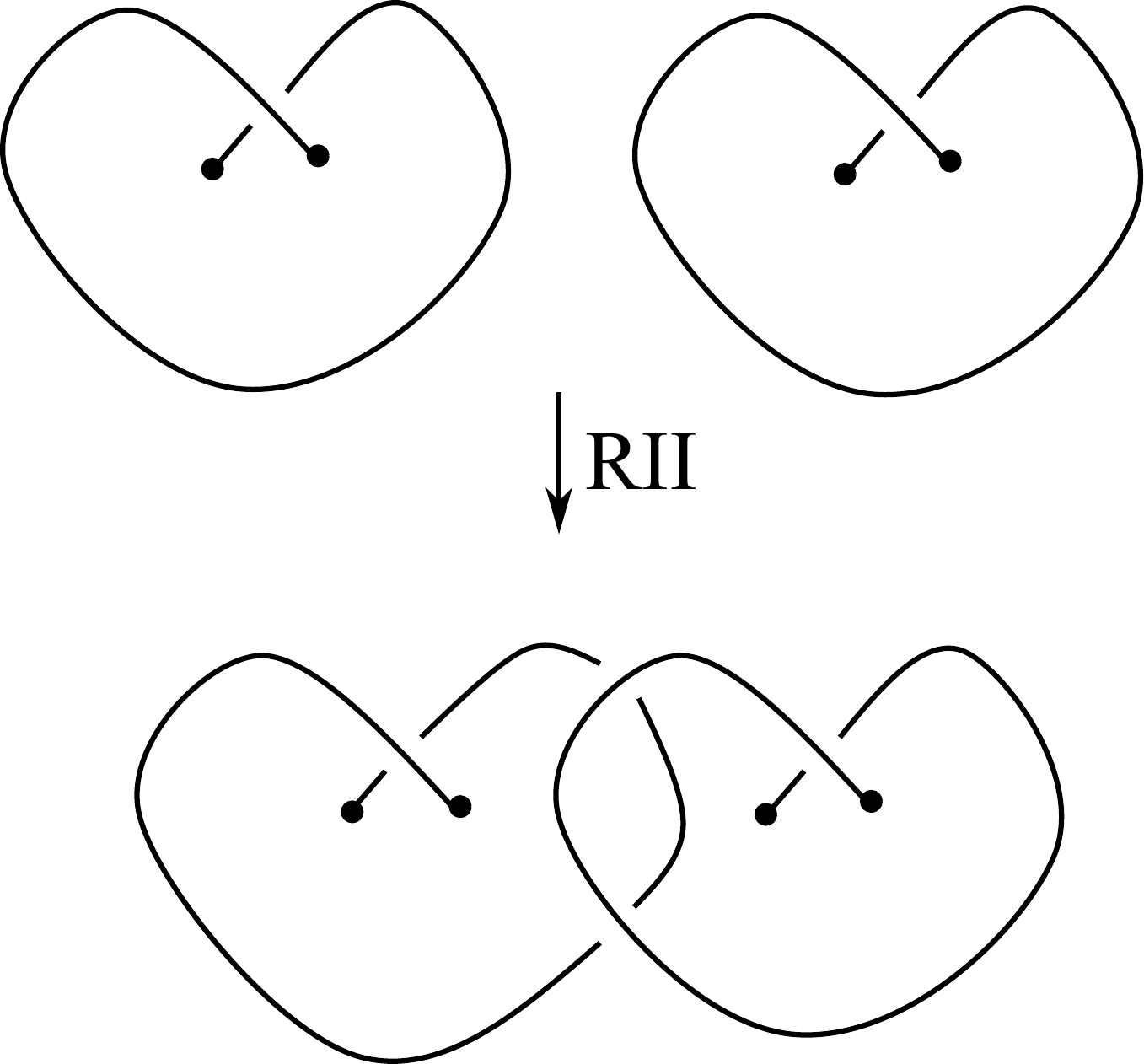}
\caption{A split linkoid transformed into an admissable diagram.}
\label{fig:splitintoadmissible}
\end{figure}

   \begin{figure}[H]
\centering
\includegraphics[width=.65\textwidth]{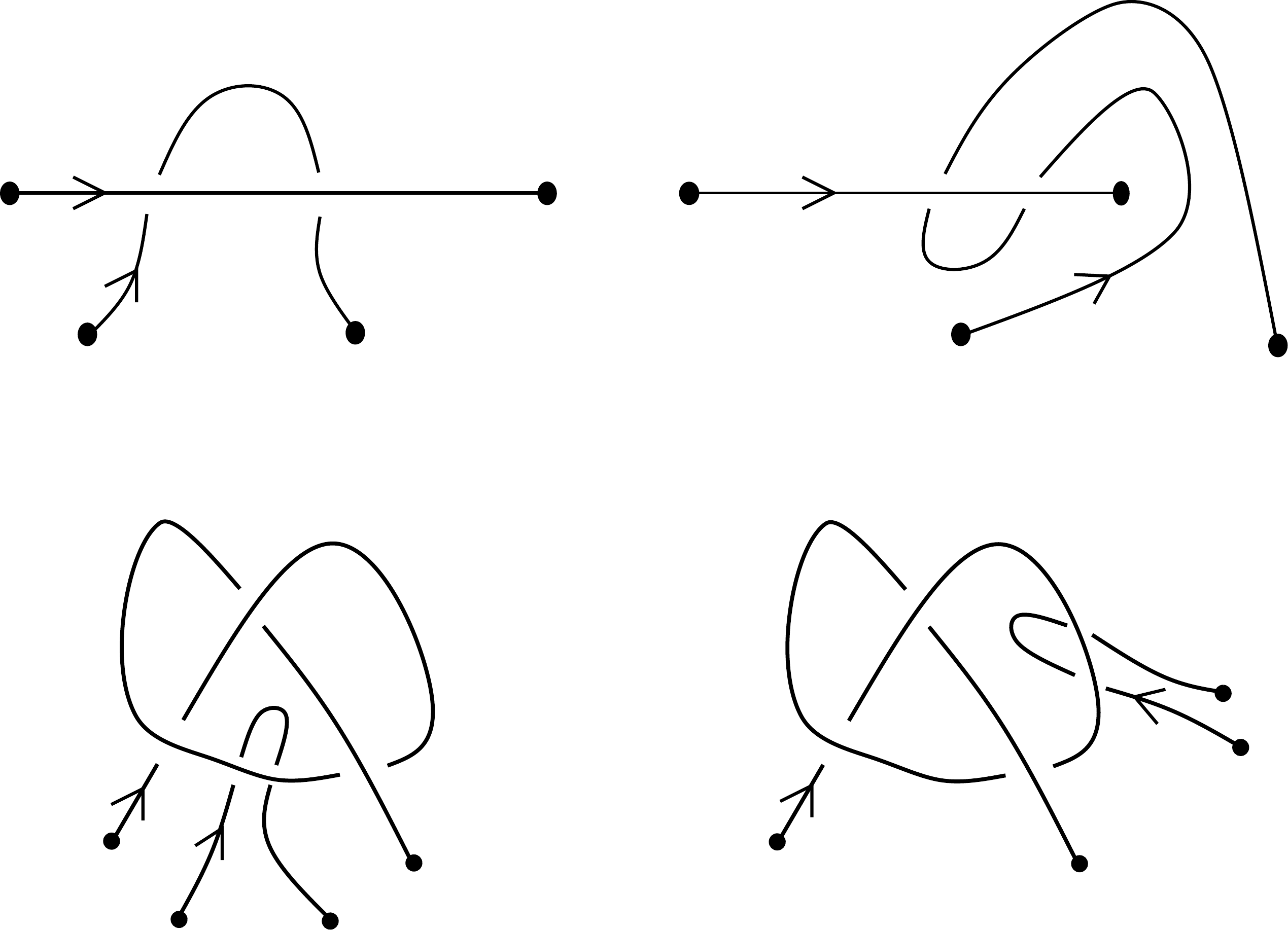}
\caption{Two pairs of admissible linkoid diagrams on top and bottom obtained by RII moves.}
\label{fig:eqv}
\end{figure}

  \begin{figure}[H]
\centering
\includegraphics[width=1\textwidth]{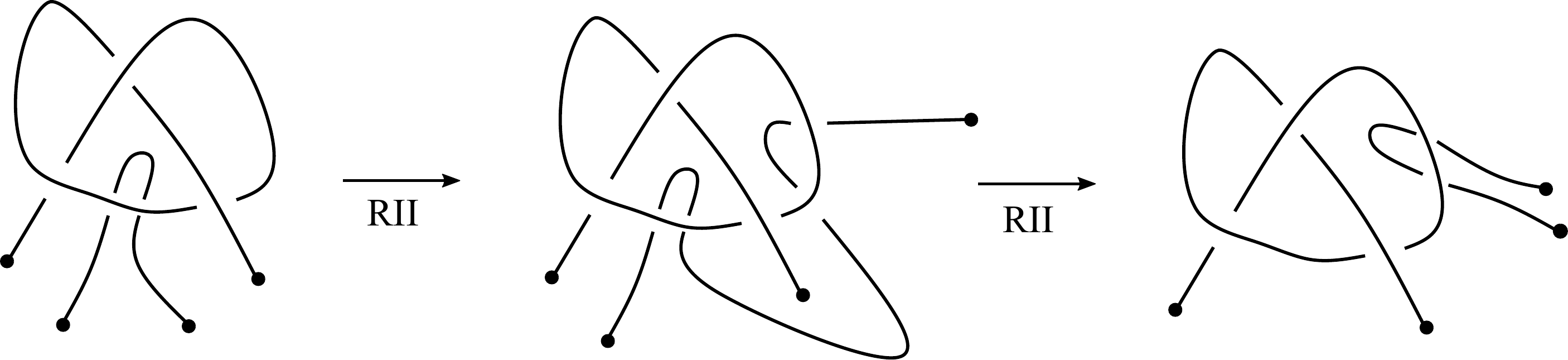}
\caption{RII moves exchanging the connecting regions.}
\label{fig:noneqv}
\end{figure}

\begin{proposition}\label{prop:split}
Let $L$ be a split linkoid diagram in $S^2$ that consists of only two knotoid components, denoted by $\alpha$ and $\beta$. Then, any two connected linkoid diagrams obtained from $L$ by an RII move which pushes the component $\beta$ under (or over) $\alpha$  are equivalent to each other through connected diagrams.
\end{proposition}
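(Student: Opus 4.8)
The plan is to reduce every such diagram to a common normal form and then to pass between normal forms by a controlled sequence of Reidemeister moves carried out entirely through connected diagrams.

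\emph{Normal form.} Since $\beta$ is connected and disjoint from $\alpha$, it lies inside a single face $F_0$ of the diagram $\alpha$, which is a disk because everything sits in $S^2$; moreover $L$ has a unique face $F_{\mathrm{out}}$ adjacent to both $\alpha$ and $\beta$, namely the component of $F_0\setminus\beta$ containing a collar of $\partial F_0$, and this face is adjacent to every edge of $\partial F_0$. An RII move that connects the two components must push an arc of $\beta$ across an arc of $\alpha$ that cobound a bigon region inside a single face of $L$; that face is therefore $F_{\mathrm{out}}$, and the arc of $\alpha$ lies in some edge $e\subseteq\partial F_0$. Hence the resulting connected diagram is, after a connected isotopy that shrinks the new ``finger'' of $\beta$ and slides it inside $\overline{F_{\mathrm{out}}}$, a diagram $L_e$ consisting of $L$ together with a small finger of $\beta$ running out of $F_0$ across $e$ and back, with $\beta$ under $\alpha$ at both of the two new crossings.

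\emph{Moving the finger along $\alpha$.} It then remains to show $L_e\simeq L_{e'}$ through connected diagrams for any two edges $e,e'$ of $\alpha$ on $\partial F_0$. As $\alpha$ is connected, $\partial F_0$ is a single closed walk, so it suffices to treat the case in which $e$ and $e'$ are the two edges bounding a corner of $F_0$ at a crossing $v$ of $\alpha$ (if the common vertex is an endpoint of $\alpha$, the walk merely turns around on one edge and there is nothing to prove). Label the four edges at $v$ cyclically $a,b,c,d$, with $\{e,e'\}=\{a,b\}$ and $F_0$ the corner between $a$ and $b$; say $e=a$, so in $L_e$ the finger crosses $a$ and occupies the corner $(d,a)$. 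First apply an RII move lengthening the finger so that it additionally crosses $d$, its tip moving into the corner $(c,d)$; since $\beta$ is everywhere under $\alpha$, the finger now passes under both strands of $\alpha$ at $v$. Next apply the RIII move sliding this under--under finger across $v$ to the opposite corner, so that afterwards it crosses $b=e'$ and $c$. Finally apply an RII move contracting the bigon straddling $c$, which leaves the finger crossing only $b=e'$, again with $\beta$ under $\alpha$; this is $L_{e'}$. At every stage the finger still meets $\alpha$, so all intermediate diagrams are connected.

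Chaining this maneuver along $\partial F_0$ shows that all the diagrams $L_e$ are mutually equivalent through connected diagrams, and by the normal form every connected diagram obtained from $L$ by an RII move pushing $\beta$ under $\alpha$ is some $L_e$; the identical argument with ``over'' in place of ``under'' completes the proof. The step that needs care is the middle one: one must verify that the extending RII move can always be arranged so that the finger lies under \emph{both} strands at $v$ — which is exactly what legitimizes the RIII move and preserves the ``$\beta$ under $\alpha$'' condition — and one must also handle the degenerate situations in which an edge or a vertex of $\alpha$ occurs more than once along $\partial F_0$ (bridges, and the pendant edges at the endpoints of $\alpha$), where the same sliding and RII/RIII moves apply but require separate bookkeeping.
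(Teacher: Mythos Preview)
Your route is quite different from the paper's and more elaborate than necessary. The paper argues in one stroke: given two connected diagrams $L_1, L_2$ obtained from $L$ by RII moves, perform on $L_1$ the RII move that creates the connection of $L_2$ (this is possible because the annulus $F_{\mathrm{out}}$, once cut by the first finger, has become a disk in $L_1$ that still borders every outer arc of $\beta$ and every edge of $\partial F_0$), and then undo the original RII move of $L_1$. Two RII moves in all, no RIII moves, and the intermediate diagram carries \emph{two} fingers between the components, so connectivity is automatic throughout.

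Your sliding argument, by contrast, has a genuine gap. The connected diagram produced by the RII move depends not only on the edge $e\subset\partial F_0$ crossed by the finger but also on which arc $b$ of $\beta$ along the inner boundary of $F_{\mathrm{out}}$ serves as the finger's base; your ``connected isotopy'' cannot carry that base past a crossing of $\beta$ any more than it can carry the tip past a crossing of $\alpha$, so the normal form $L_e$ is not well-defined by $e$ alone. Your RII--RIII--RII maneuver slides the tip of the finger along $\partial F_0$ edge by edge but never moves its base along $\beta$. To finish you would need the symmetric slide on the $\beta$ side, doubling the case analysis you already flag at the end --- whereas the paper's two-RII trick handles both ends simultaneously and sidesteps all of it.
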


\begin{proof}

Consider any two possible ways to connect the components  $\alpha$ and $\beta$ by applying an RII move that pushes $\beta$ under (or over) $\alpha$. It is clear that the resulting connected diagrams can be taken to each other by a sequence of planar isotopy moves and two RII moves exchanging the connecting regions. During this transformation, connectivity is preserved.

\end{proof}

\subsection{Starred links and linkoids}\label{sec:starred}
In this section, we introduce a decoration for link and linkoid diagrams lying in a closed, connected, orientable surface. The purpose for decorating a diagram will be to obtain the equality for the number of four-valent vertices and the number of regions in the corresponding decorated universe. This equality will induce a square region-vertex incidence matrix whose permanent gives Alexander type polynomials.

 Let $L$ be a non-admissible link or linkoid diagram in a surface of genus $g$. Let the equality $f = n + k$ hold, for some $k \in \mathbb{N}_{> 0}$, where $f$ is the number of regions and $n$ is the number of crossings of $L$. We choose a collection of $k$ regions of $L$ and decorate each of them with a star. Regions endowed with a star are considered to be dismissed from $L$ so that the equality $f=n$ holds for the resulting starred diagram. In a similar manner, if the number of crossings is larger than the number of regions of $L$, that is, if $n= f + k$ holds, for some $k \in \mathbb{N}_{> 0}$, then we decorate a collection of $k$ crossings of $L$ with stars. A vertex that receives a star is considered to be dismissed from the universe of $L$ so that in the resulting starred diagram, the number of regions is equal to the number of crossings.

\begin{definition}\normalfont
A \textit{starred} link or linkoid diagram in a surface is a link or linkoid diagram that is endowed with stars either on a number of its regions or crossings so that it satisfies the equality $f = n$. We consider a link or linkoid diagram in a surface which already satisfies $f=n$ without any star addition as a starred diagram with zero stars. Figures \ref{fig:tor} and \ref{fig:comp} set examples for such diagrams.

\end{definition}

\begin{definition}\normalfont
Two starred link or linkoid diagrams in a surface  are \textit{star equivalent} if they are related to each other by Reidemeister moves that take place away from the starred regions or vertices and isotopy of the surface. The addition/deletion of a starred vertex or pulling a strand across a starred crossing or region is not allowed. In this way the number of starred regions or crossings is preserved. See Figure \ref{fig:starredmoves} for a list of moves that are not allowed for decorated diagrams. 
\end{definition}

\begin{figure}[H]
\centering
\includegraphics[width=.3\textwidth]{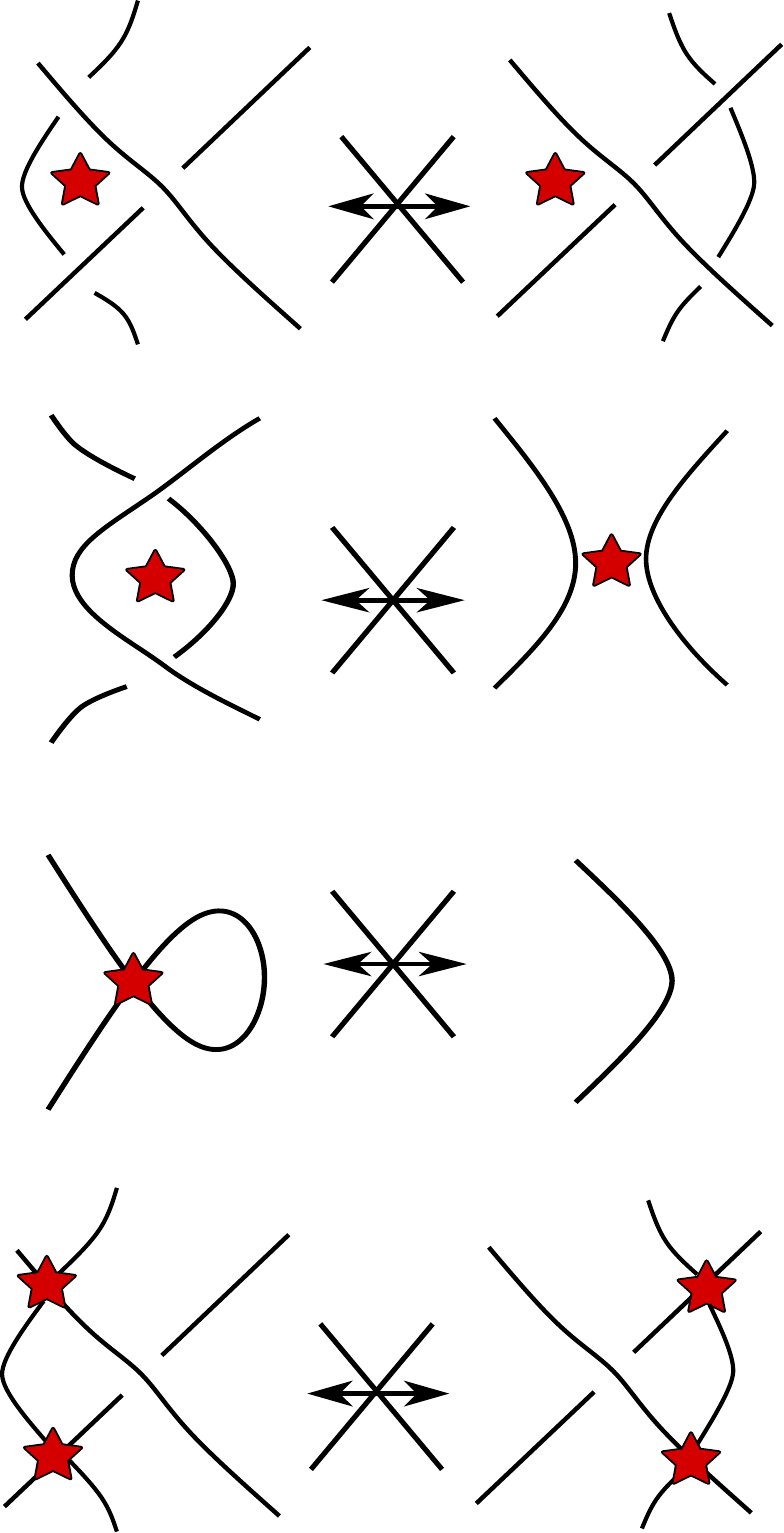}
\caption{Moves that are not allowed for starred diagrams.}
\label{fig:starredmoves}
\end{figure}
\begin{definition}\normalfont
A \textit{starred link} or \textit{linkoid} in a surface is an equivalence class of star equivalent link or linkoid diagrams.
\end{definition}

It is clear that the starred linkoid diagrams in Figure \ref{fig:noneqv} are not star equivalent , since the number of regions endowed with a star is not equal.
\begin{figure}[H]
\centering
\includegraphics[width=.6\textwidth]{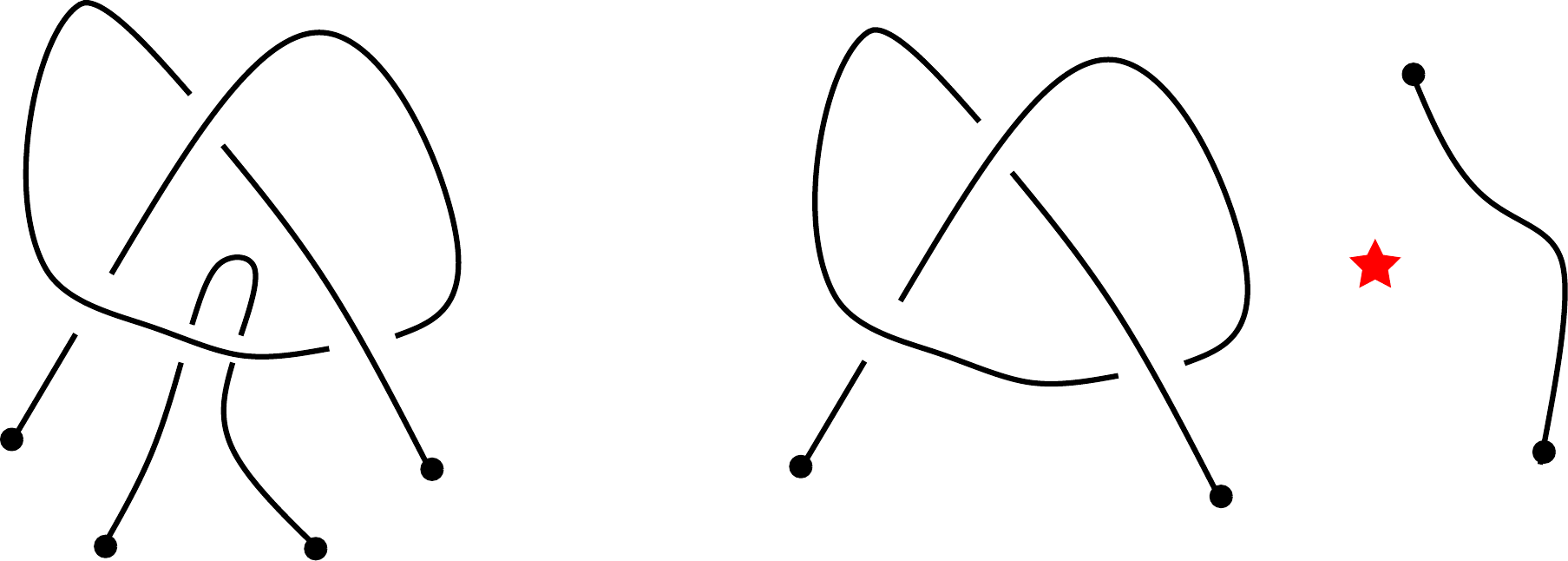}
\caption{Two nonequivalent starred linkoid diagrams.}
\label{fig:noneqv}
\end{figure}

Note here that a starred knotoid in $S^2$ can also be considered as a generalized knotoid. Generalized knotoids were introduced in \cite{Adams}.













\section{Mock Alexander polynomials for starred links and linkoids} \label{sec:state-sum}


\subsection{A state-sum polynomial for starred links and linkoids}\label{sec:map}
In this section we introduce states and a state-sum polynomal of starred link or linkoid diagrams. 



\begin{definition}\normalfont

Let  $L$ be an oriented starred link or linkoid diagram in a surface.  A \textit{state} of $L$ is obtained by placing a marker at every face of the universe of $L$ that does not admit a star,  placed at exactly one of the crossings that is incident to the region that also does not admit a star. We illustrate a state of a starred knotoid diagram in $S^2$, with black state markers in  Figure \ref{fig:st}. The reader can find the collection of all states of the starred knotoid diagram in Figure \ref{fig:labeltwo}.

 If $L$ is a starred link or linkoid diagram without any stars on its regions or crossings, then a state of $L$ is obtained by endowing each face with a marker at exactly one of the crossings incident to the region that receives a marker.


    \end{definition}
   
   
   \begin{figure}[H]
\centering
\includegraphics[width=.3\textwidth]{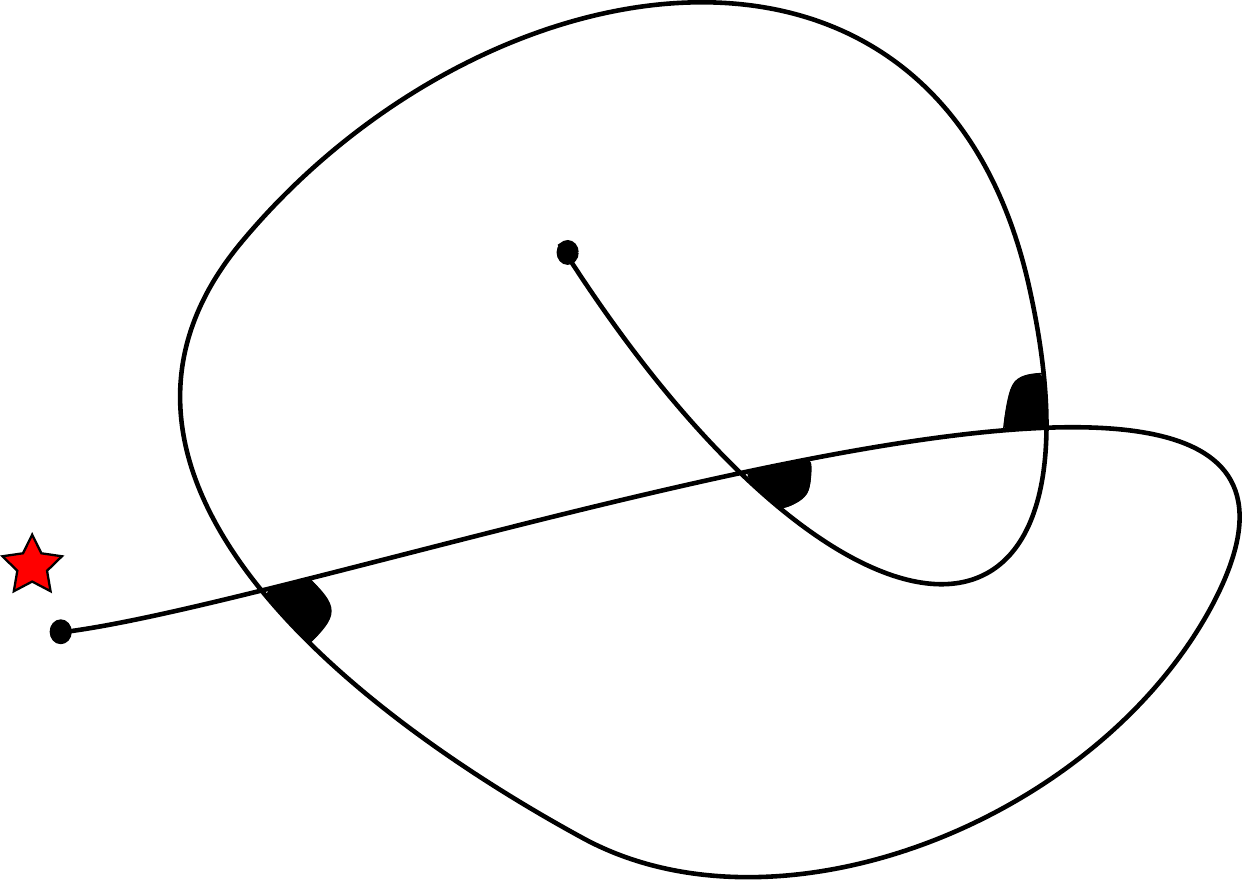}
\caption{A state of a starred knotoid diagram.}
\label{fig:st}
\end{figure}
The following observations can be derived easily from Proposition \ref{prop:euler}.


 \begin{enumerate}
 \item Let $L$ be a connected link diagram in $S^2$. Since $f=n+2$, we decorate the universe of $L$ with two stars at any of its two regions, then endow each of the remaining regions with exactly one state marker at an incident crossing to obtain a state of $L$.
 \item If $L$ is a connected linkoid diagram in $S^2$ with two knotoid components, then its universe is admissible. A state of $U$ is obtained directly by endowing $U$ with exactly one state marker at each of its bounded regions, placed at a crossing incident to the region.
 \item If $L$ is connected linkoid diagram in $S^2$ with $m \geq 3$ knotoid components, then we have $f - n  \leq -1$. In this case, endowing a collection of $f-n$ crossings with stars turns $L$ into a starred linkoid diagram.  A state of the resulting starred inkoid diagram is obtained by placing a state marker at each of its regions next to a crossing that is free of stars and incident to the region.  



 
 \end{enumerate}


 We consider the following labels given in Figure \ref{fig:labelgeneral}, at local regions incident to a crossing of $L$. 

\begin{figure}[H]
\centering
\includegraphics[width=.35\textwidth]{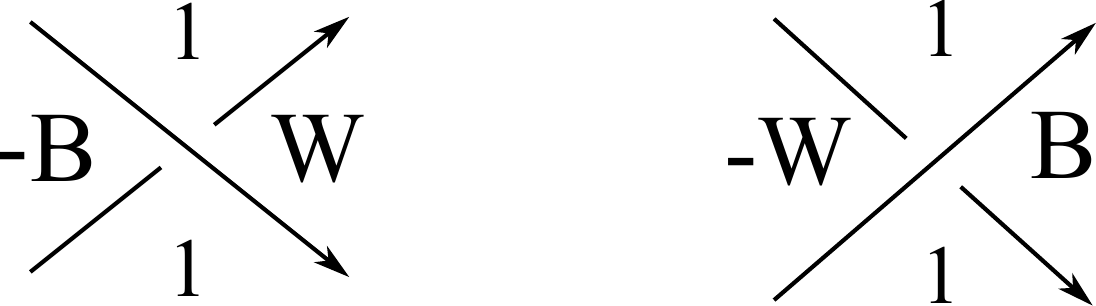}
\caption{Labels at local regions incident a positive and negative crossing.}
\label{fig:labelgeneral}
\end{figure}

\begin{definition}\normalfont
Let $L$ be an oriented starred link or linkoid diagram and $S$ denote a state of $L$.
 The \textit{weight} of  the state $S$, denoted by  $<L~ |~ S>$  is defined to be the product of labels at the crossings that receive a marker in $S$.
\end{definition}

\begin{definition}\normalfont
Let $L$ be an oriented starred link or linkoid diagram in a surface. We define a state-sum polynomial for $L$ denoted by $\nabla_{L}$ as follows.
$$\nabla_{L} (W, B) = \sum_{S \in \mathcal{S} } < L ~|~ S >,$$
where $\mathcal{S}$ denotes the set of all states of $L$. We call $\nabla_{L}$ the \textit{potential} of $L$.
\end{definition}

\begin{note}\normalfont
We will show below that the potential of a starred link or linkoid diagram $L$ is invariant with respect to the star equivalence with the condition $W= B^{-1}$. The reason that we first  present the potential of $L$ is that it is of graph theoretic interest on its own. This approach is supported by a conjecture we state in Conjecture \ref{conj:conj1}.
\end{note}

\begin{example}\label{ex:one}\normalfont
In Figure \ref{fig:labeltwo}, we depict a starred knotoid diagram in $S^2$, named as $K_1$ and all of its states with their weights below.
It can be easily verified that the potential of  $K_1$ is  $W^2 - WB +(W-B) +1$.  

\begin{figure}[H]
\centering
\includegraphics[width=.8\textwidth]{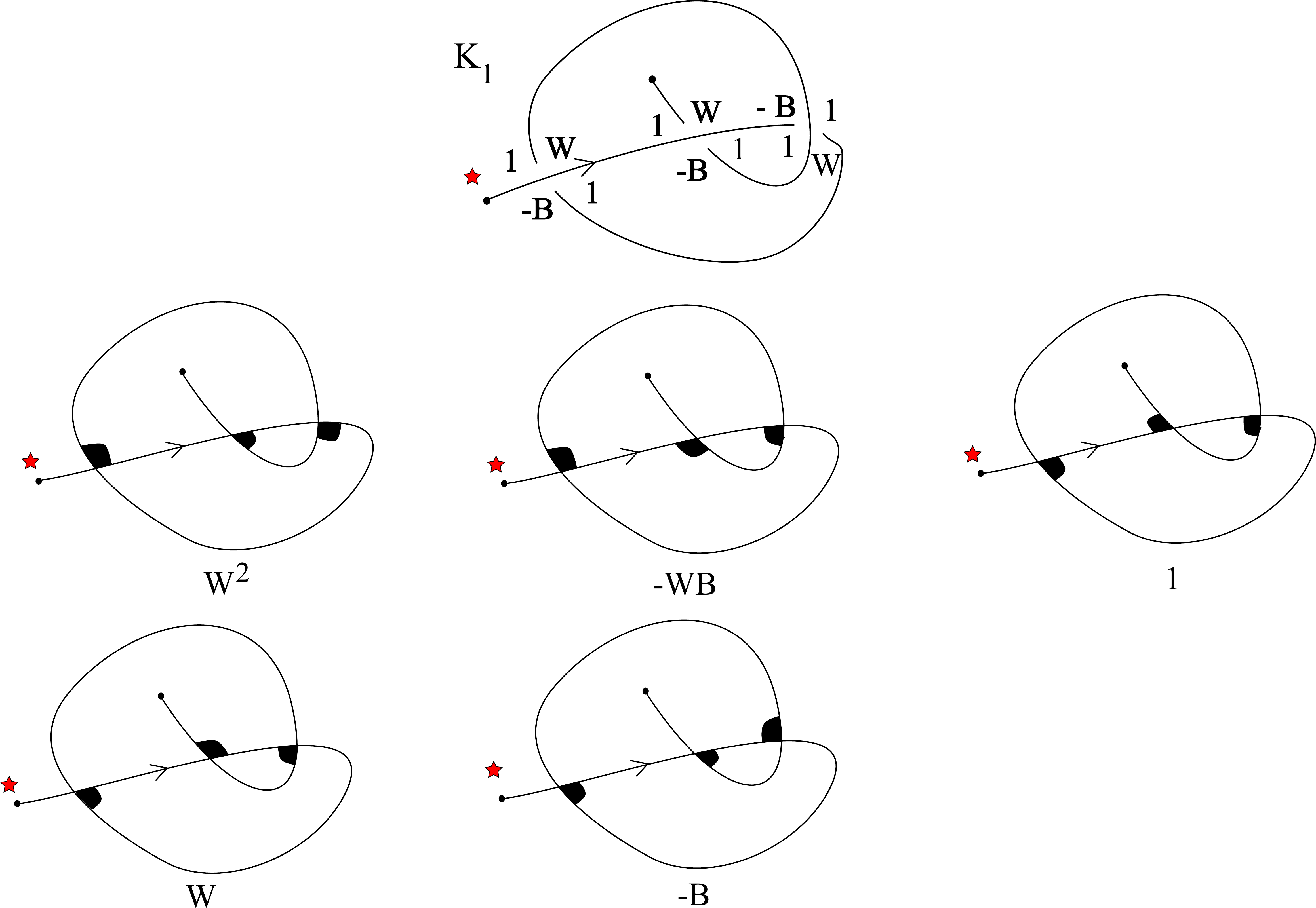}
\caption{The starred knotoid diagram $K_1$ in $S^2$ and its states.}
\label{fig:labeltwo}
\end{figure}

 In Figure \ref{fig:allstates2}, we depict another starred knotoid diagram $K_2$ in $S^2$ (which is the same with $K_1$ except the starred region), with its states and state weights. We find that  the potential of $K_2$ is $B^2 -WB +(W-B)+1$. 
\begin{figure}[H]
\centering
\includegraphics[width=.8\textwidth]{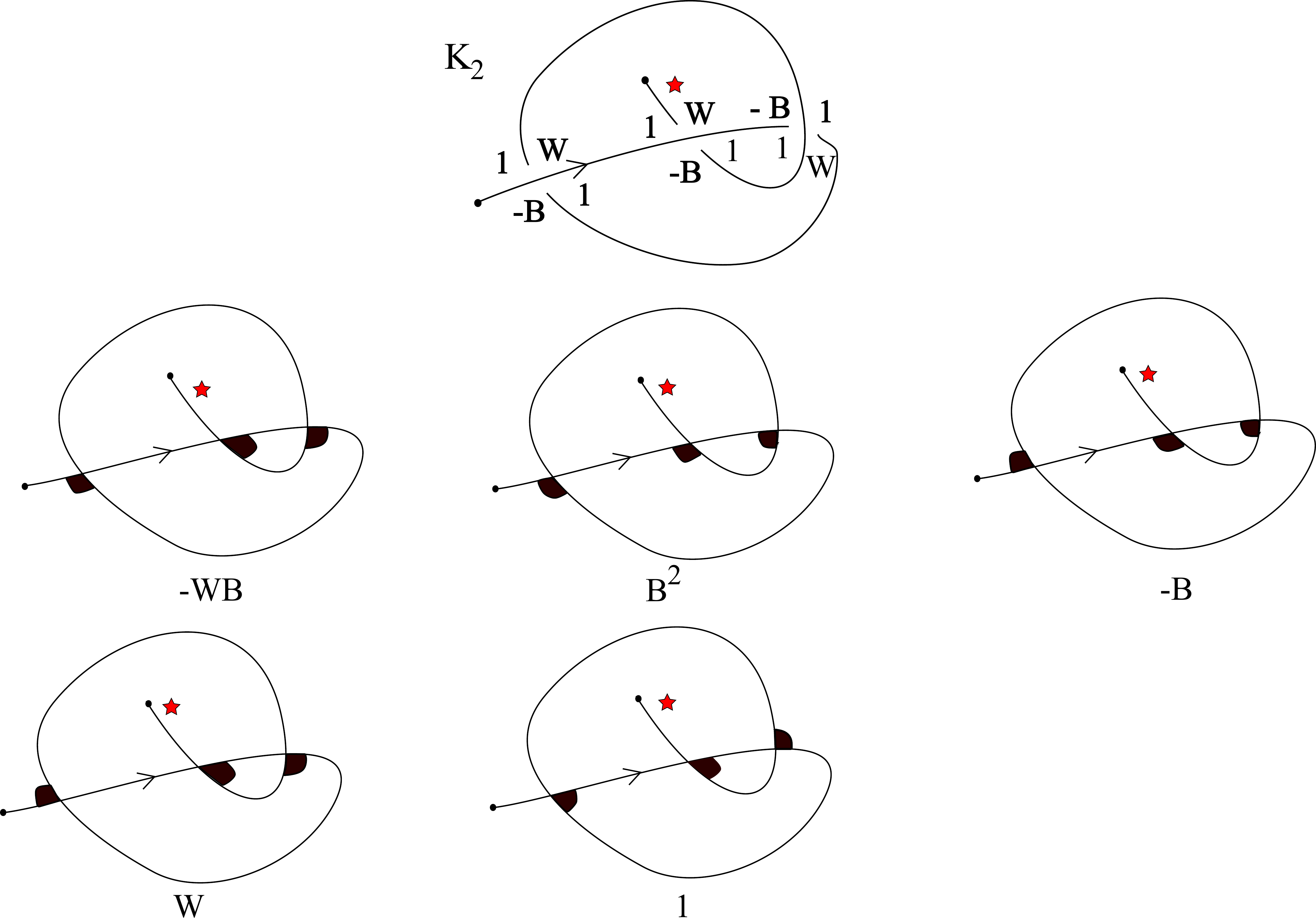}
\caption{The starred knotoid diagram $K_2$ in $S^2$ and its states.}
\label{fig:allstates2}
\end{figure}
\end{example}

Notice that the potential of $K_2$ , can be obtained by replacing $W$ by $-B$ and $B$ by $-W$ in the potential of $K_1$ given in Figure \ref{fig:labeltwo}. 
We have observed this symmetry in many other examples of pairs of starred knotoid diagrams in $S^2$ which differ from each other only for the starred regions as discussed above. We conjecture the following.

\begin{conjecture}\label{conj:conj1}
Let $K$ be a knotoid diagram in $S^2$, and $K_1$, $K_2$  be two starred knotoid diagrams obtained from $K$ by placing a star in the regions of $K$ that are incident to the tail and the head of $K$, respectively.
Then, the following holds.
$$\nabla_{K_{1}}(W, B) = \nabla_{K_{2}} (-B, -W).$$

\end{conjecture}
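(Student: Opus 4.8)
The plan is to construct an explicit bijection between the states of $K_1$ and the states of $K_2$ and to track the resulting change in weights. Both $K_1$ and $K_2$ come from the \emph{same} underlying knotoid diagram $K$, which (being a knotoid) has one long component, so by Proposition~\ref{prop:euler}(2) its universe satisfies $f - n = 2 - 0 - 1 = 1$; hence $K$ has exactly one more region than crossing, and a single star suffices to produce an admissible starred diagram. In $K_1$ the star sits on the region $R_t$ incident to the tail, in $K_2$ on the region $R_h$ incident to the head. A state of $K_1$ is an assignment that puts one marker in each region \emph{other than} $R_t$ at an incident crossing, subject to the one-marker-per-crossing bijection; similarly for $K_2$ with $R_h$ removed. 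So in both cases we are choosing a perfect matching between the $n$ crossings and the $n$ admissible regions.

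\textbf{Key steps.} First I would set up the combinatorial model: identify states of $K_i$ with perfect matchings in the crossing--region incidence bipartite graph $G_i$, where $G_1$ is obtained from the full incidence graph of $U_K$ by deleting the vertex $R_t$ and $G_2$ by deleting $R_h$. Second, I would invoke (or re-prove in this special case) a Clock-Theorem-type statement from Formal Knot Theory: for a knotoid diagram the states for two different choices of deleted region are related in a controlled way — concretely, there is a canonical bijection $\Phi : \mathcal{S}(K_1) \to \mathcal{S}(K_2)$ obtained by "rerouting" the markers along the unique alternating path/cycle structure created by swapping which region is starred. The substitution $W \mapsto -B$, $B \mapsto -W$ is exactly the operation that (a) exchanges the two entries $W$ and $B$ assigned to the two regions lying on the same side of an over/undercrossing pair in Figure~\ref{fig:labelgeneral}, and (b) introduces a sign for each crossing whose marker moved. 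So the third and central step is to show that under $\Phi$ the weight transforms precisely by this rule: $<K_1 \mid S> \ =\ (-1)^{?}\,<K_2 \mid \Phi(S)>$ with the $W \leftrightarrow B$ roles swapped, and that the accumulated signs are globally consistent (the "mysterious" agreement of permutation signs with a diagrammatic count, exactly as in the classical black-hole computation). Summing over all states then yields $\nabla_{K_1}(W,B) = \nabla_{K_2}(-B,-W)$.

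\textbf{Main obstacle.} The hard part will be the sign bookkeeping in step three — proving that the rerouting bijection $\Phi$ changes the state weight by exactly the factor dictated by $W \mapsto -B$, $B \mapsto -W$, uniformly over all states. In the original Formal Knot Theory setting this is the content of the Clock Theorem and the black-hole parity identity; here one needs the analogue for knotoid diagrams in $S^2$ with an endpoint-adjacent starred region, and one must check that moving the starred region from the tail-region to the head-region corresponds combinatorially to reversing the orientation of the long component (which accounts for the interchange of $W$ and $B$ at every crossing) together with a global sign that is absorbed by the $(-1)$'s in the substitution. Establishing that these signs glue coherently — i.e. that there is no leftover global sign discrepancy between the two sides — is the crux; I would expect to handle it by an induction on the number of crossings using a single Reidemeister/flype-type reduction, or by a direct appeal to the permanent formulation of Section~\ref{sec:permanent}, comparing the two region--crossing incidence matrices $M[R_t]$ and $M[R_h]$ and showing one is obtained from the other by the row/column operations realizing $W \mapsto -B,\ B \mapsto -W$. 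Since the statement is only a conjecture in the paper, I would also flag that the coherence of signs is precisely the point where a proof might fail, and that verifying it in families of examples (as the authors do) is the current evidence.
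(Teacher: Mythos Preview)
The paper does not prove this statement: it is explicitly labeled a \emph{conjecture}, supported only by examples, with the specialization $W=B^{-1}$ (Conjecture~\ref{conj:ii}) announced as proved in a forthcoming paper. So there is no ``paper's own proof'' to compare against, and you should not present your outline as if it were competing with one.

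As for the outline itself, it is a reasonable strategy sketch but not a proof, and you are candid about this. Two concrete remarks. First, your reversion heuristic can be sharpened and shows the real content of the conjecture. If orientation reversal indeed effects $(W,B)\mapsto(-B,-W)$ on the potential (this must be checked from the labeling convention in Figure~\ref{fig:labelgeneral}; in the $B=W^{-1}$ case it matches Proposition~\ref{prop:rev}), then since the tail region of $K$ is the head region of $\overline{K}$, one gets $\nabla_{K_2}(-B,-W)=\nabla_{(\overline{K})_1}(W,B)$. The conjecture therefore becomes equivalent to $\nabla_{K_1}(W,B)=\nabla_{(\overline{K})_1}(W,B)$, i.e.\ that the tail-starred potential is invariant under reversion. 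This is not circular, but it is not obviously easier either --- in the paper the $W=B^{-1}$ version of this reversion invariance (Corollary after Conjecture~\ref{conj:ii}) is itself \emph{deduced from} Conjecture~\ref{conj:ii}, not used to prove it. So your reversion idea reformulates the problem rather than solving it.

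Second, the ``Clock-Theorem-type bijection'' you invoke is precisely the missing ingredient. The classical Clock Theorem relates states with the \emph{same} pair of starred regions via clocked/counterclocked transpositions; it does not directly furnish a bijection between state sets for \emph{different} starred regions, and no knotoid analogue is established in the paper. Your alternating-path idea for constructing $\Phi$ is plausible, but the claim that the weight changes by exactly the substitution $(W,B)\mapsto(-B,-W)$ state-by-state is stronger than what the conjecture asserts (which is only an equality of sums) and would itself require proof. In short: the plan is sensible, the difficulty is correctly located, but nothing here goes beyond what the authors already knew when they called it a conjecture.
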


At the time of writing this paper, a specialization of this conjecture given here as Conjecture \ref{conj:ii}, has been proven for the case when $W= B^{-1}$ \cite{Woutpaper}.




\subsubsection{A matrix formulation for the potential}\label{sec:permanent}


Let $L$ be an oriented link or linkoid diagram with $n$ crossings, that is tightly embedded in a closed, connected, orientable surface of genus $g$. We enumerate each region and  crossing of $L$  with respect to a chosen starting point and the orientation on $L$. We consider the local  region labeling at crossings of $L$, that we introduce in Figure \ref{fig:labelgeneral}.

$L$ endowed with the local labels at its crossings can be represented by a matrix $M_L = [M_{ij}]$ whose rows and columns correspond crossings and regions of $L$, respectively, and consist of the corresponding label. Precisely, the ${ij}^{th}$ entry of $M_L$ is the sum of  labels on the $j^{th}$ region received at the $i^{th}$ crossing. If the $j^{th}$ region is not incident to the $i^{th}$ region then the corresponding entry is $0$. We call this matrix \textit{potential} matrix of $L$.

The potential matrix of the knotoid diagram $K$ is presented in Figure \ref{fig:combex}. 
\begin{figure}[H]
\centering
\includegraphics[width=.7\textwidth]{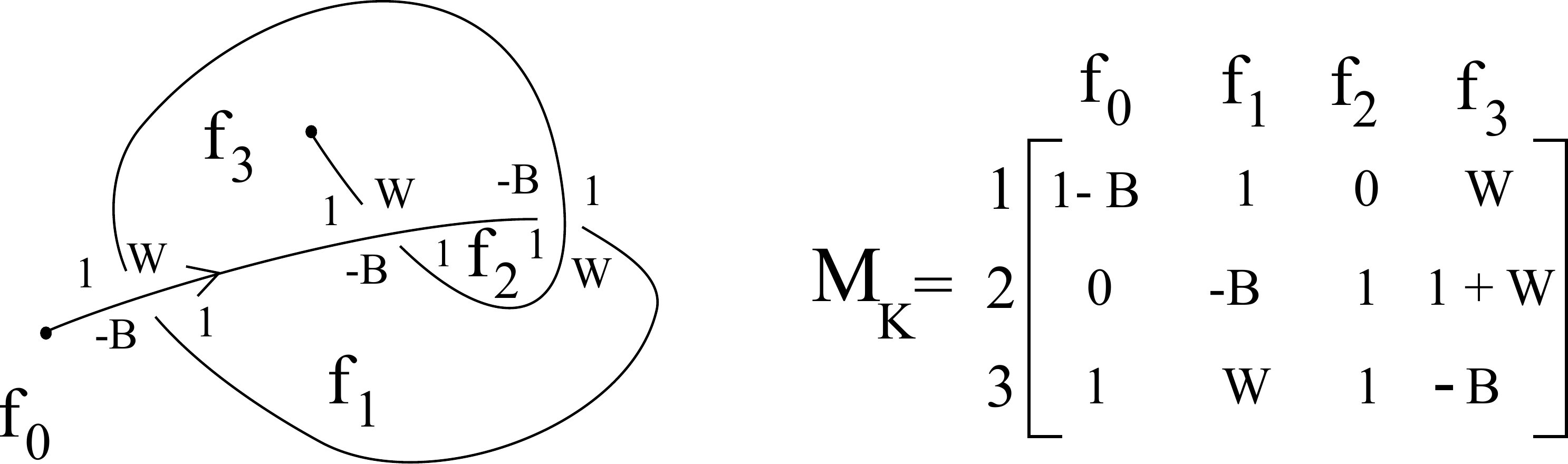}
\caption{The potential matrix of $K$}
\label{fig:combex}
\end{figure}


\begin{definition}\normalfont

Let $L$ be a starred link or linkoid diagram with $n \geq 1$ crossings, obtained from a link or linkoid diagram  in a surface by endowing a number of either regions or crossings of the link or linkoid diagram with stars. The \textit{potential matrix} of $L$ is the square matrix that is obtained by deleting each column or row from the potential matrix of the link or linkoid diagram which  corresponds to a region or a crossing, respectively that receive a star. We denote the potential matrix of $L$ by $M_L$. 
\end{definition}

\begin{definition}\normalfont
Given an $n \times n$ matrix $M_{ij}$. The\textit{ permanent} of $M$, $Perm(M)$ is given by
\begin{center}
$Perm(M) = \sum_{\sigma \in S_{n}} \Pi^{n}_{i=1} M_{i\sigma (i)}$,
\end{center}
where the sum runs over the symmetric group $S_n$. 
\end{definition}

\begin{proposition}\label{prop:perm}
Let $L$ be a starred link or linkoid diagram in a surface, with $n \geq 1$ crossings and regions.
The potential of $L$, $\nabla_{L}(W, B)$ is equal to the permanent of the potential matrix $M_{L}$ of $L$. 
\end{proposition}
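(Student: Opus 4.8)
The plan is to show that the permanent expansion of $M_L$ is, term by term, in bijection with the states of $L$, with matching contributions. First I would recall the definition of the permanent as $\mathrm{Perm}(M_L) = \sum_{\sigma \in S_n} \prod_{i=1}^{n} (M_L)_{i\sigma(i)}$, where the rows of $M_L$ are indexed by the non-starred crossings $1, \dots, n$ and the columns by the non-starred regions $1, \dots, n$ (this count being equal by the admissibility/starring hypothesis, so that $M_L$ is genuinely square). Since the permanent does not involve signs, every nonzero summand is a product of $n$ entries, one from each row and one from each column.

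Next I would unwind what a nonzero term $\prod_{i=1}^n (M_L)_{i\sigma(i)}$ means diagrammatically. The entry $(M_L)_{ij}$ is nonzero only when the $j$-th region is incident to the $i$-th crossing, and in that case it equals the label ($W$, $-W$, $B$, or $-B$, possibly summed if the region occupies more than one local quadrant at that crossing) assigned at that crossing–region incidence via Figure \ref{fig:labelgeneral}. A permutation $\sigma \in S_n$ contributing a nonzero term therefore assigns to each crossing $i$ a distinct region $\sigma(i)$ incident to it; equivalently, reading $\sigma^{-1}$, it assigns to each region a distinct incident crossing. That is exactly the data of a state of $L$: each non-starred region places a marker at exactly one incident non-starred crossing, and the bijectivity of $\sigma$ guarantees each crossing is chosen exactly once. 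Conversely every state determines such a permutation. I would make this correspondence explicit and note that under it the product $\prod_i (M_L)_{i\sigma(i)}$ is precisely $\langle L \mid S\rangle$, the product of the local labels at the marked quadrants — using the remark that when a region meets a crossing in two quadrants, the corresponding matrix entry is the sum of the two labels, which then splits correctly into the sum over the two distinct states that place the marker in either quadrant.

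Summing over all $\sigma$ (equivalently all states) then gives $\mathrm{Perm}(M_L) = \sum_{S \in \mathcal{S}} \langle L \mid S \rangle = \nabla_L(W,B)$, which is the claim. I expect the main point requiring care to be the bookkeeping around regions that touch a single crossing in more than one of its four local quadrants: there the matrix entry is a \emph{sum} of labels rather than a single monomial, so the multilinear expansion of $\prod_i (M_L)_{i\sigma(i)}$ produces several monomials from one $\sigma$, and each such monomial must be matched with a distinct state. The claim that this expansion is exactly the refinement from "which crossing a region marks" to "which quadrant at that crossing a region marks" is the crux, and I would verify it by distributing the products and observing the resulting monomials biject with states in which markers are assigned to quadrants. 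Everything else — squareness of $M_L$, absence of signs, the incidence condition forcing the support of $\sigma$ — is routine given Proposition \ref{prop:euler} and the definitions already in place.
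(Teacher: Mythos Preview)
Your proposal is correct and follows essentially the same approach as the paper: both arguments identify the nonzero permanent summands indexed by $\sigma\in S_n$ with assignments of (non-starred) regions to distinct (non-starred) incident crossings, and both handle the case where a single region occupies two local quadrants at a crossing by noting that the corresponding matrix entry is a sum of labels which, upon expansion, splits into the contributions of the distinct states. Your treatment is in fact slightly more explicit than the paper's --- you spell out the multilinear expansion for the multi-quadrant case in general, whereas the paper only discusses it for the specific situation of a linkoid region adjacent to an endpoint --- but the underlying idea is identical. (One cosmetic slip: the quadrant labels from Figure~\ref{fig:labelgeneral} are not literally $W,-W,B,-B$; two of the four quadrants carry the label $1$, as you can see from the matrix entries in the examples. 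This does not affect your argument.)
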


\begin{proof}
Given an $n \times n$ matrix $M_{ij}$. In the permanent expansion of $M_{ij}$, for every $\sigma \in S_n$, the components of the product $\Pi^{n}_{i=1} M_{i\sigma (i)}$ corresponds to a single row choice by each column since $\sigma$ is a bijection. For the potential matrix of $L$, this translates to a single choice of a vertex by each region of $L$. If $L$ is a starred link diagram, then this correspondence implies that each product in the permanent sum are in fact a weight of a state that appears in the mock Alexander polynomial of $L$.

Suppose now, $L$ is a starred linkoid diagram with a region incident to an endpoint of a knotoid component of $L$ and the crossing adjacent to the region do not admit a star. Then, the region incident to an endpoint receives two labels and the corresponding entry in the potential matrix is the sum of these labels. These labels contribute in state weights of two states of $L$ that differ from each other at only the vertex (crossing) that receives the labels. Then, the sum of these state weights is equal to the permanent product involving entry corresponding to the vertex and the region incident to it receiving two labels.

By the arguments above, we see that each nonzero summand of the permanent of $M_{L}$ is equal to a state weight or the sum of two state weights of $L$. Therefore, the permanent of $M_{L}$ is equal to the potential of $L$.

\end{proof}

\begin{example}\normalfont
 Let $K$ be the knotoid diagram given in Figure \ref{fig:combex}. Consider the starred diagram $K_{*}$ obtained from $K$ by endowing the region $f_0$ with a star. One can verify by direct calculation that the permanent of the potential matrix $M_{K_{*}}$ is $W^2 - WB +(W-B) +1$, and this coincides with the potential of $K_1$ calculated in Example \ref{ex:one}.

\end{example}

\subsection{An invariant of starred links and linkoids  induced by the potential}\label{sec:invariance}
Here, we investigate the conditions for the potential of a starred link or linkoid diagram to induce an invariant of starred links and linkoids.

\begin{theorem} \label{thm:invariance}
The potential is an invariant of starred links and linkoids that lie in a surface of genus $g \geq 0$, when $WB = 1$.

\end{theorem}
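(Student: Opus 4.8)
The plan is to check invariance of the potential $\nabla_L(W,B)$ under each of the allowed star-equivalence moves: Reidemeister moves RI, RII, RIII performed away from starred regions and crossings, together with isotopy of the surface. Isotopy of the surface has no effect on the combinatorial data (faces, crossings, incidences, orientation-induced labels), so the content is entirely in the three Reidemeister moves. For each move I would set up a local picture, enumerate how the faces and crossings inside the region of the move change, track how states restrict to the region, and compare the two local state-sums. Since a state assigns to every non-starred face a marker at an incident non-starred crossing, a Reidemeister move changes the count of faces and crossings by the same amount (this is forced by Proposition~\ref{prop:euler} applied to the before/after diagrams, or seen directly), so the sets of states on the two sides are in bijection outside the move region and the comparison is purely local.

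\textbf{The RII and RI moves.} For RII, the move creates (or destroys) two new crossings and one new bounded face (a bigon); the two new crossings must be chosen by exactly: the new bigon face and exactly one of the outer faces adjacent to the move. Working through the orientations, the two new crossings carry labels that are reciprocal up to sign in the two relevant quadrants, and summing over the (two) local marker configurations gives a factor of $WB$ times the corresponding local configuration of the simplified diagram, plus terms that cancel in pairs. Imposing $WB=1$ makes this factor trivial, so the local contributions match. The RI move is similar but simpler: it creates one crossing and one face (a monogon); the new face is forced to choose the new crossing, and the other quadrant of that new crossing is chosen by exactly one outer face, contributing a single label which one checks (using the vertex weights of Figure~\ref{fig:labelgeneral}) equals $1$ — or again pairs up so that $WB=1$ kills the discrepancy. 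This is where the precise choice of labels $1, W, B, -1$ etc. at positive/negative crossings is used, and one must be careful about the two sign variants of each move.

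\textbf{The RIII move.} RIII preserves the number of crossings (three) and faces locally, so states on both sides restrict to the move region with the same ambient data; the task is to match the local state-sums crossing by crossing. Here the cleanest approach is to pass to the matrix formulation of Proposition~\ref{prop:perm}: $\nabla_L = \mathrm{Perm}(M_L)$, and RIII corresponds to a local change of the three rows and the columns of the six surrounding regions. I would show the relevant $3\times(\text{regions})$ submatrix transforms by an operation that preserves the permanent after imposing $WB=1$ — essentially the same computation that underlies the invariance of the Alexander--Conway potential in Formal Knot Theory~\cite{FKT}, adapted to allow the extra variable and the star-deletion of rows/columns.

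\textbf{Main obstacle.} The hard part is bookkeeping: verifying that in every orientation variant of RI, RII, RIII the forced marker placements in the move region are exactly as claimed, and that the leftover label products reduce to $1$ precisely under $WB=1$ — and, crucially, checking that none of these local moves can interfere with a starred face or crossing in a way that breaks the bijection of states (this is exactly why the forbidden moves in Figure~\ref{fig:starredmoves} are excluded, and I would state explicitly that each allowed move takes place in a disk containing no star, so star data is untouched). A secondary subtlety is the linkoid case, where a face incident to an endpoint carries a \emph{sum} of two labels; one must confirm the RII/RIII analyses still go through when the move region abuts such a face, using the same observation as in the proof of Proposition~\ref{prop:perm} that such a face's two labels split into two states differing only at one crossing.
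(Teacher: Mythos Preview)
Your overall strategy---a local, move-by-move comparison of state sums---is exactly what the paper does, so the approach is sound. However, your bookkeeping in the RI and RII cases is incorrect in ways that would derail the computation if you actually carried it out.

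For RI, you write that ``the other quadrant of that new crossing is chosen by exactly one outer face.'' This is wrong: a state is a \emph{bijection} between non-starred faces and non-starred crossings, so the new crossing receives a marker from exactly one face, and that face must be the new monogon (which has no other incident crossing). No outer face places a marker at the new crossing. The entire contribution is the single label at the monogon quadrant, and the point (which you must check from the orientation) is that this quadrant is one where both strands point toward or both away from the vertex, so the label is $1$ directly---no appeal to $WB=1$ is needed for RI.

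For RII, you say the move ``creates two new crossings and one new bounded face (a bigon).'' This is also wrong: RII creates two new crossings and \emph{two} new faces---the bigon, and one of the two pieces into which the former middle region is split. This is not a cosmetic point: if only one face were created, $f-n$ would change and the diagram would cease to be admissible. Consequently your ``(two) local marker configurations'' undercounts the cases; the paper finds three generic local configurations (and more in variants where endpoints or starred regions abut the move site), and it is in comparing these that the factor $WB$ appears and must be set to $1$.

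For RIII, your idea of pushing the computation through the permanent via ``an operation that preserves the permanent'' is unlikely to help: permanents, unlike determinants, have no useful invariance under row or column operations, so you will be forced back into exactly the direct enumeration of local marker placements that the paper carries out (five local configurations on one side versus three on the other, in the representative case). There is no shortcut here; the paper's proof is this enumeration.
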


\begin{proof}
Let $L$ be a starred link or linkoid diagram that lies in a surface. 
An RI move adds a crossing and a new region incident to the crossing to $L$ or deletes a crossing and one of the regions that is incident to the crossing from $L$. Let us assume the first case. 
 \begin{figure}[H]
\centering
\includegraphics[width=.35\textwidth]{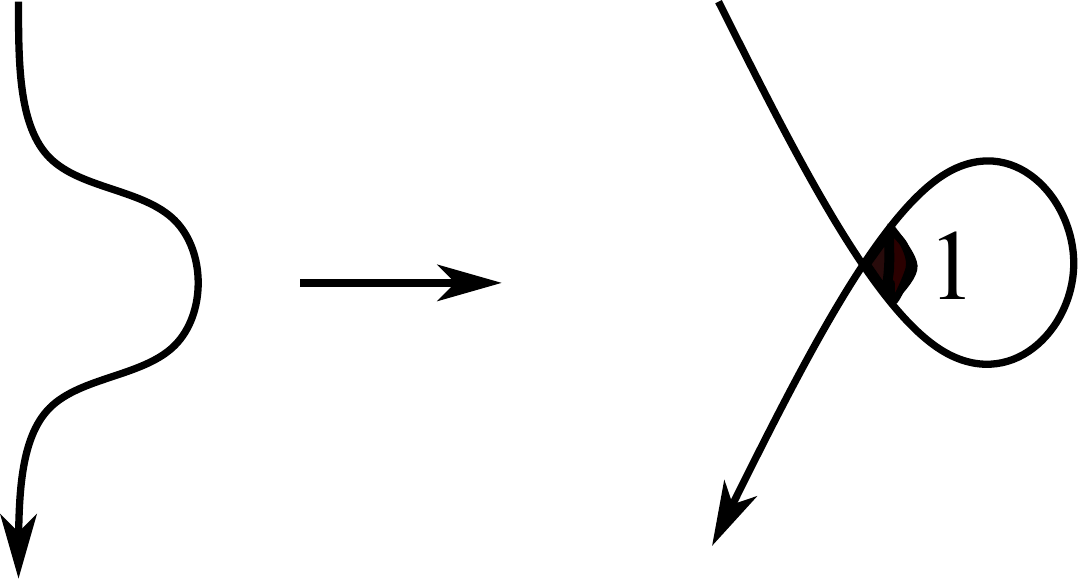}
\caption{Invariance under an RI move.}
\label{fig:invariance1}
\end{figure}
The  region added after the move receives a state marker at the new crossing, as shown in Figure \ref{fig:invariance1}, and this state marker appears in any state of the new diagram without any affect on the state markers of the former diagram.  This implies that the states of the former and latter  diagrams are in one-to-one correspondence with each other. We can see easily that the contribution of the state marker at the new crossing  to the state weight is trivial regardless of the type of the crossing created. Therefore, the state-sum polynomial is not affected under this move. If a type I move deletes a crossing from the diagram then a region that is adjacent only to the deleted crossing is removed and since the weight of the state marker at the crossing is trivial by the directions of the strands at the crossing  , the potential is not affected.

An RII adds or deletes two regions locally to a starred link or linkoid diagram. In Figure \ref{fig:nostar}, we examine two cases of an RII move where the strands in the move sites are not adjacent to any endpoints (if the diagram is a linkoid), and no  region incident to strands is endowed with a star. The possible local placements of the state markers at the crossings in the RII move are shown in Figure \ref{fig:nostar}. Here each tensor product sign indicates that the regions with the tensor products receive the state markers at crossings that are not involved with the move. We deduce that the assumption $WB =1$ gives the invariance under the first case of an RII move where the strands have the same direction. For the other case, where the strands are directed oppositely, the invariance is provided directly. Note here that the regions $T$ and $U$ may be the same region in the corresponding link or linkoid diagram. The local state configurations would be the same for this case as well. There if $WB= 1$, the invariance is provided.

 \begin{figure}[H]
\centering
\includegraphics[width=1\textwidth]{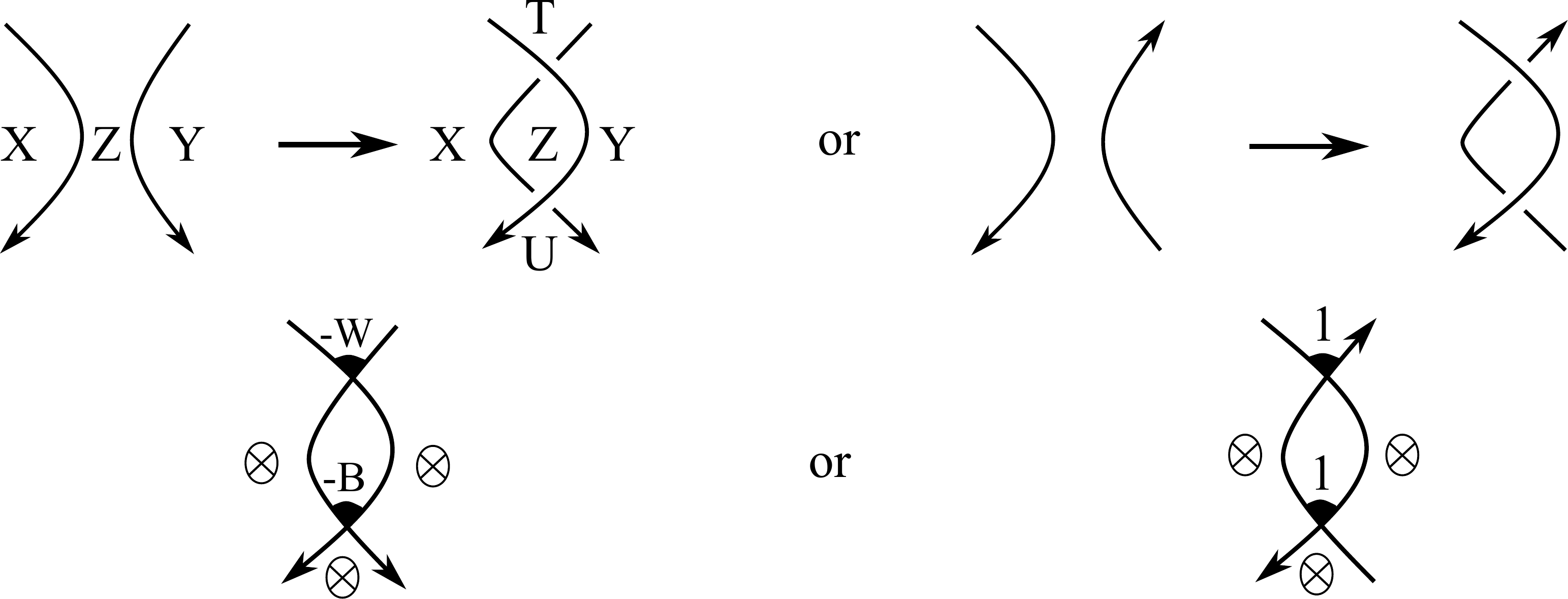}
\caption{Invariance under an RII move.}
\label{fig:nostar}
\end{figure}
Some other cases of an RII move applied on a starred link or linkoid diagram where the neighboring regions in an RII move site are endowed with stars, and the local state configurations corresponding to these cases, are depicted in Figure \ref{fig:starredR2} and Figure \ref{fig:starredendpoints}. The invariance of the potential under the assumption $WB =1$ can be verified easily by the figures for all of these cases. Here we examine in detail the behavior of the potential under the RII moves, depicted in Figure \ref{fig:starredendpoints},  where the strands in the move site are incident to endpoints of a starred linkoid diagram, and the verification for other cases of an RII move are left for the reader.

 \begin{figure}[H]
\centering
\includegraphics[width=1.1\textwidth]{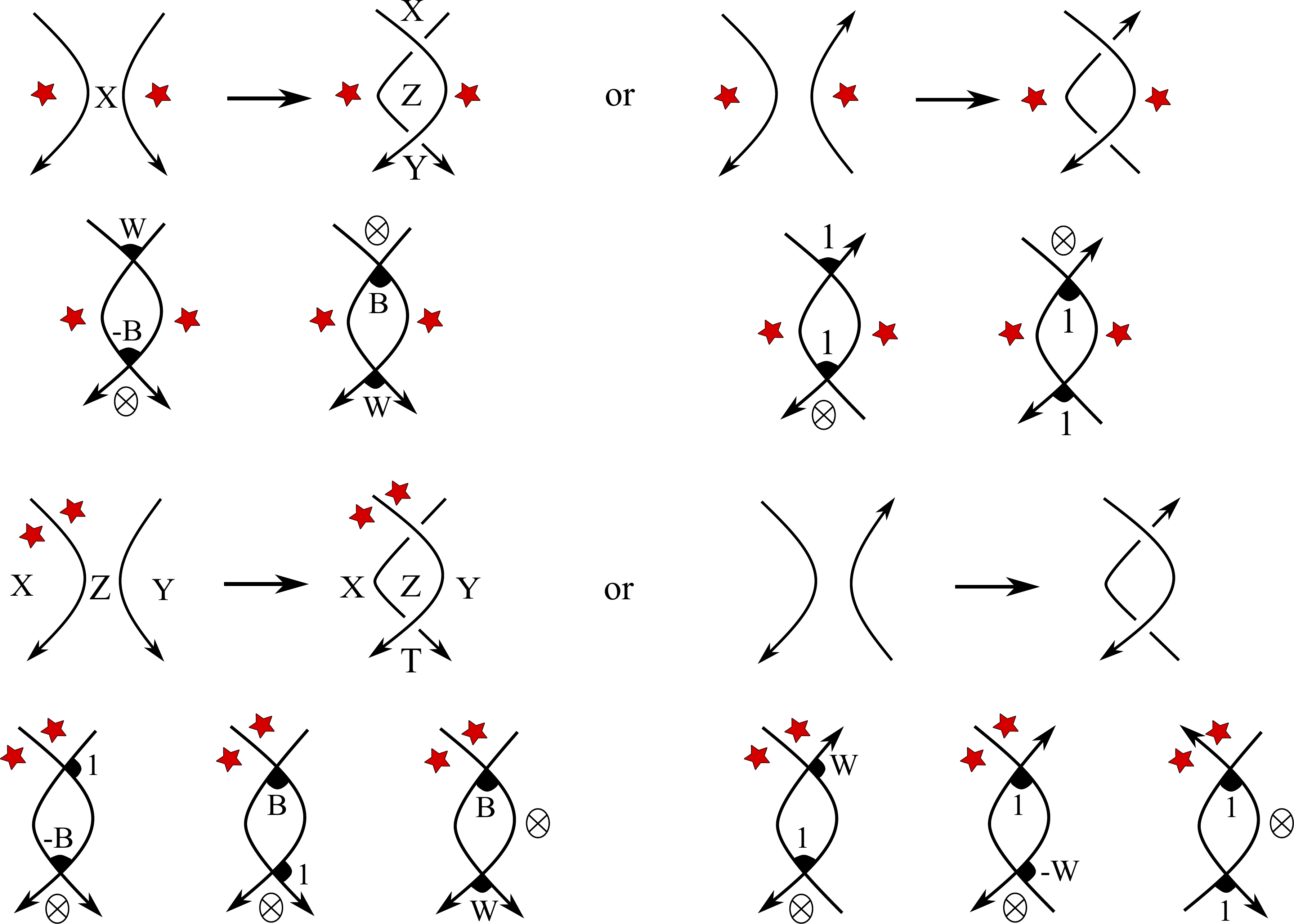}
\caption{RII moves where the neighboring regions receive stars.}
\label{fig:starredR2}
\end{figure}

 \begin{figure}[H]
\centering
\includegraphics[width=1\textwidth]{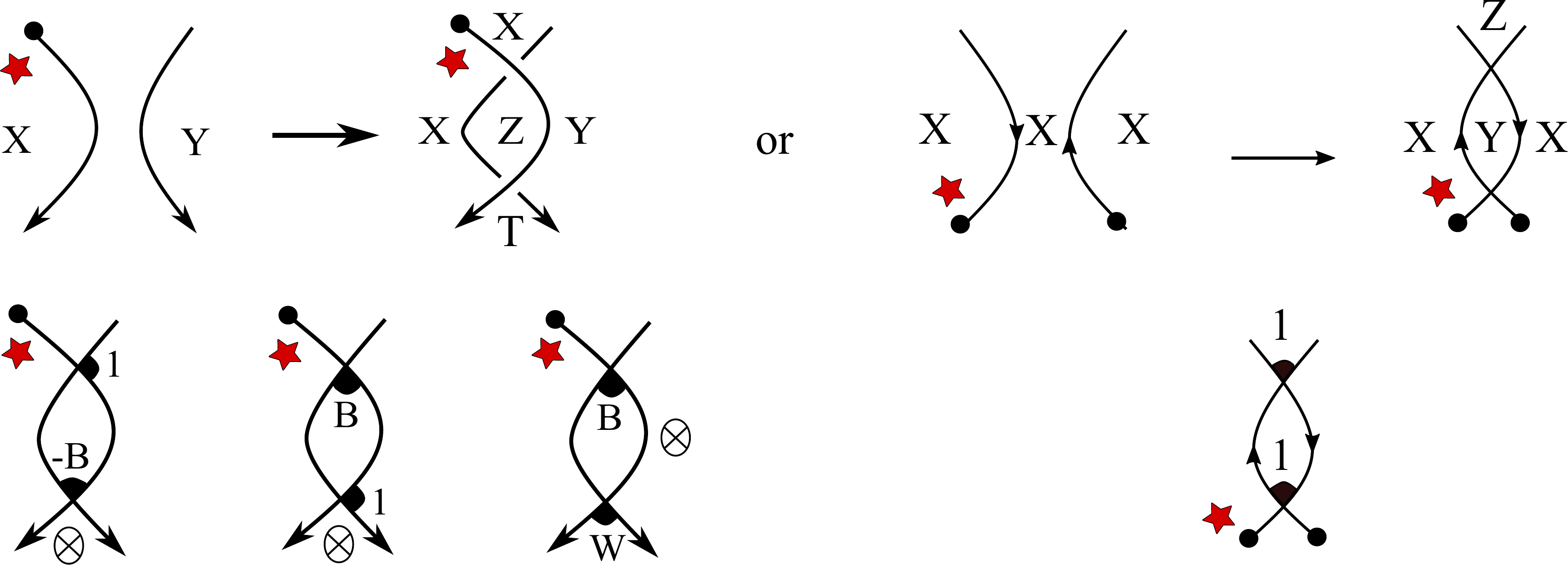}
\caption{RII moves where strands are incident to endpoints and neighboring regions receive stars.}
\label{fig:starredendpoints}
\end{figure}

One of the strands in the RII move depicted on the top row of Figure \ref{fig:starredendpoints},  is assumed to be incident to the tail of the knotoid component, and the adjacent region that is labeled as $X$, is endowed with a star. Notice here that region $X$ lies on both sides of the tail arc.  The move adds two regions (labeled as $T$ and $Z$) to the local portion of the diagram appearing after the move. We assume that the regions $X$ and $T$ are not the same region.  There are in total three possible local placements of the state markers at the crossings in the RII move as in the previous cases. We picture these placements at the bottom row of Figure \ref{fig:starredendpoints}. 

Notice that for every state $s$ that involves the first (leftmost) configuration, there exists a unique state that involves the second (middle) configuration and coincides with $s$ except  for the state markers at the crossings in the move site. Then,  it can be also be verified by the figure that for a state that involves the first configuration and  contributes to the potential with $-BP$, where $P$ denotes the product of weights of state markers outside the move site, there is a state that involves the second configuration and contributes with $BP$.  Thus, the total contribution of states involving the first and the second configurations to the potential is zero.  In the third local state configuration, the regions added by the move, namely  the regions $Z$ and $T$ receive state markers and the state markers at the crossings outside the move site are not affected. Then, the contribution of a state that involves this configuration is $WBP$, where $P$ holds for the product of state weights outside the move site in the state.  Therefore, the potential is invariant under the type II move if $WB=1$.  For the second case, it is not hard to see that the contributions of the first two local state configurations cancel each other, and the invariance under this move follows directly, as the last local state contributes with trivially.


The regions labeled by $X$ and $T$ may be the same region only if the strands in the move belong to different components of the linkoid diagram. If $X$ and $T$ are the same region, then the region $T$ is also the starred region and does not receive any state marker.  So, the first two local state configurations at the crossings added by the move, are the only possible local configurations for states of the resulting diagram. where the region $Y$ gets its marker at the crossings added by the move in these configurations. It can be verified from the figure that the contributions of these two local states multiply the product of weights of the remaining crossings of the diagram with $-B$ and $B$, respectively in every possible state. This means that the total sum of contributions, and so the potential of the starred linkoid diagram is zero. 
 
In the case depicted on the right hand side of Figure  \ref{fig:starredendpoints}, the strands in the move site are incident to the endpoints of a knotoid diagram. The endpoints apparently lie on the same region that is endowed with the star. Here the RII move adds two new regions, named as $Y, Z$ to the knotoid diagram since the diagram is connected.  There is only one possible  configuration of state markers at the move site, as shown in the figure. The product of state weights in this local configuration is trivial ($1$) and the states of the knotoid diagrams before and after the move are in one-to-one correspondence since the state markers at the regions before the move are all outside of the move site and not affected by the move. Therefore, the potential of the knotoid diagram remains the same under the move.

 \begin{figure}[H]
\centering
\includegraphics[width=.5\textwidth]{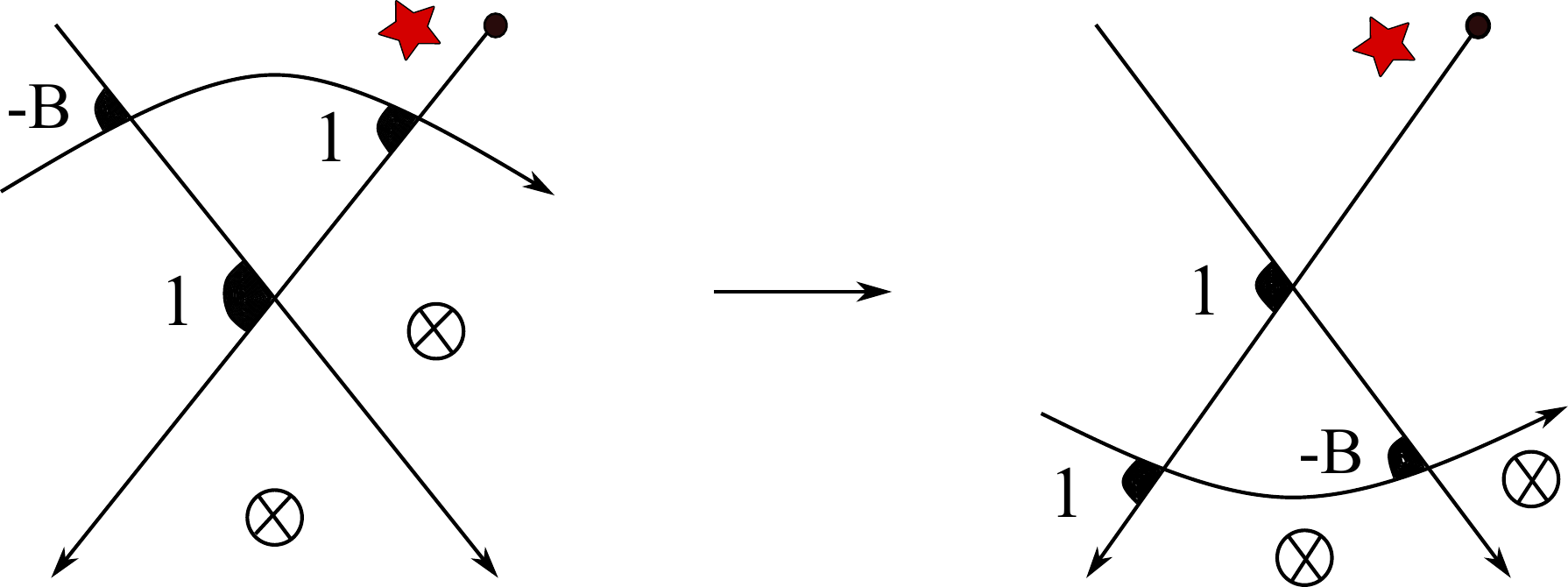}
\caption{An RIII move.}
\label{fig:invariance3-1}
\end{figure}
There are various cases for an RIII move as well. We first study the invariance in a case depicted in Figure \ref{fig:invariance3-1}, where one of the strands appearing in the move site is adjacent to the tail of the diagram and the region incident to the tail is endowed with a star. It is not hard to observe that the regions of the diagram before the move takes place are in one-to-one correspondence with the regions of the diagram obtained after the move takes place, and there is only one possible placement of the state markers at the crossings that are in the move site before and after the move takes place. The move also has no effect on the state markers at the regions that lie outside of the move site. Without loss of generality, we can assume all flat crossings shown in the move site are positive crossings in the corresponding knotoid diagram. It can be verified by the figure that the product of weights of the crossings lying in the move sites before and after the move takes place, are both $- B$.
Thus, the potential is preserved under this move.

Another case of an RIII move is given in Figure \ref{fig:invariance3-2}, where none of the regions adjacent to the crossings of the move site is starred and the strand adjacent to the tail is involved in the move. In addition to this, three of the local regions in the move site are occupied by tensor product signs. In this configuration, there are three possible placements of the state markers at the crossings in the move site before the move takes place. Assuming that all the crossings in the move site are positive crossings, the sum of the product of weights of the state markers in the local portions of the three states is $W^2 - W^2B - W$, as can be  verified by the figure. After the move, we see that there are five possible placements of the state markers at the crossings of the move site. Summing up the product of weights of the markers in the move site among all these local states, we find that the contribution is $W^2 -B- WB +WB^2 +1$. Therefore, the potential remains unchanged under this move if $WB = 1$.  

A variation of the case depicted in Figure \ref{fig:invariance3-2} where the region incident to the endpoint is not endowed with a star, is given in Figure \ref{fig:invariance3-3}. The invariance when $WB= 1$ can be verified directly by the figure.  Figures \ref{fig:invariance3-2} and \ref{fig:invariance3-3} take care of all possible types of an RIII move where an endpoint is involved with the move site and the star of the diagram is far away from the endpoint.

The reader is directed to \cite{FKT} for the details on possible cases of an RIII move that take place on a link diagram endowed with stars on a pair of adjacent regions. The verification of invariance can also be made similarly as in \cite{FKT}  for an RIII move that takes place on a link diagram that is endowed with stars on a pair of non-adjacent regions.
\begin{figure}[H]
\centering
\includegraphics[width=.8\textwidth]{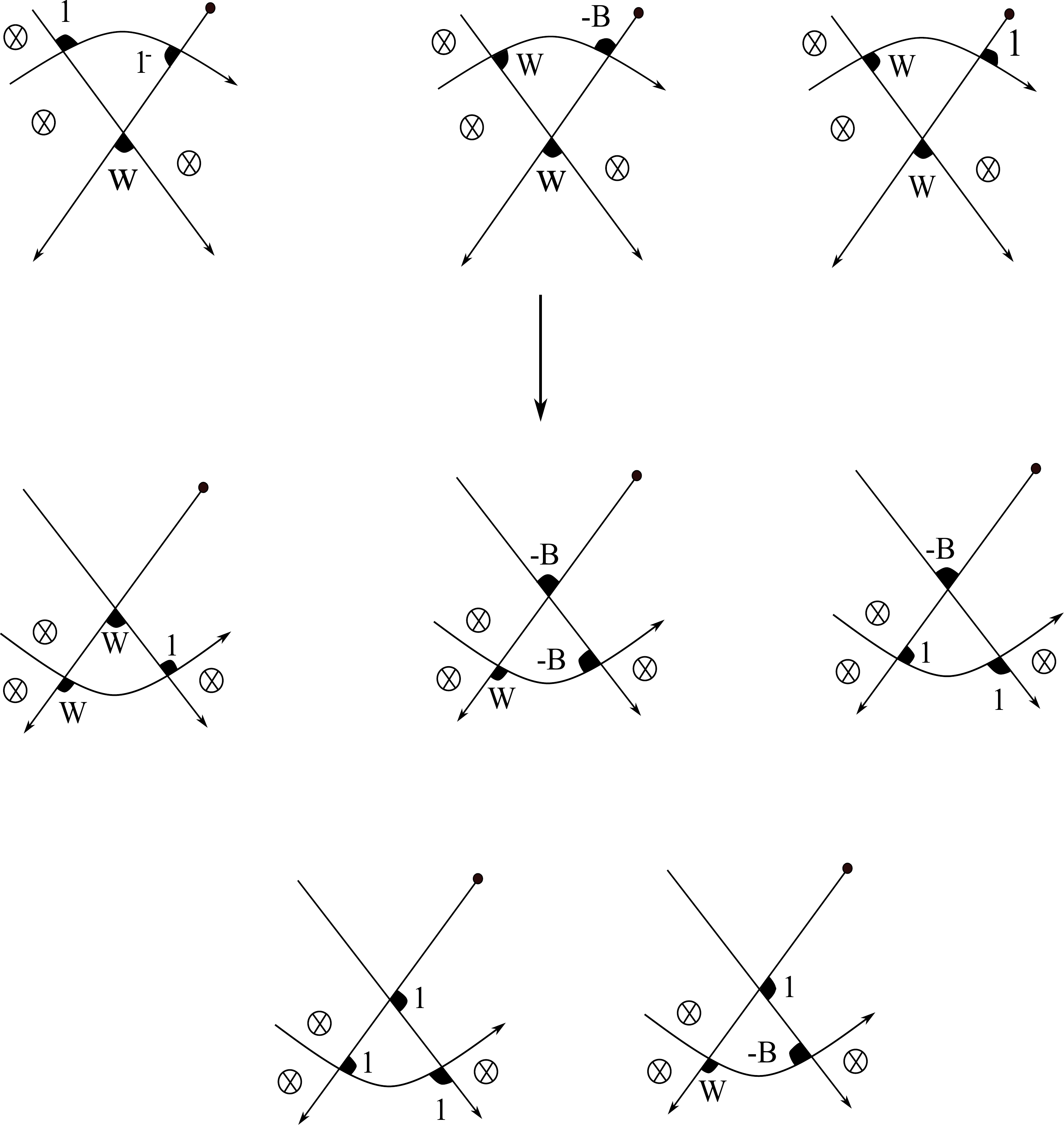}
\caption{An RIII move.}
\label{fig:invariance3-2}
\end{figure}

\begin{figure}[H]
\centering
\includegraphics[width=1.1\textwidth]{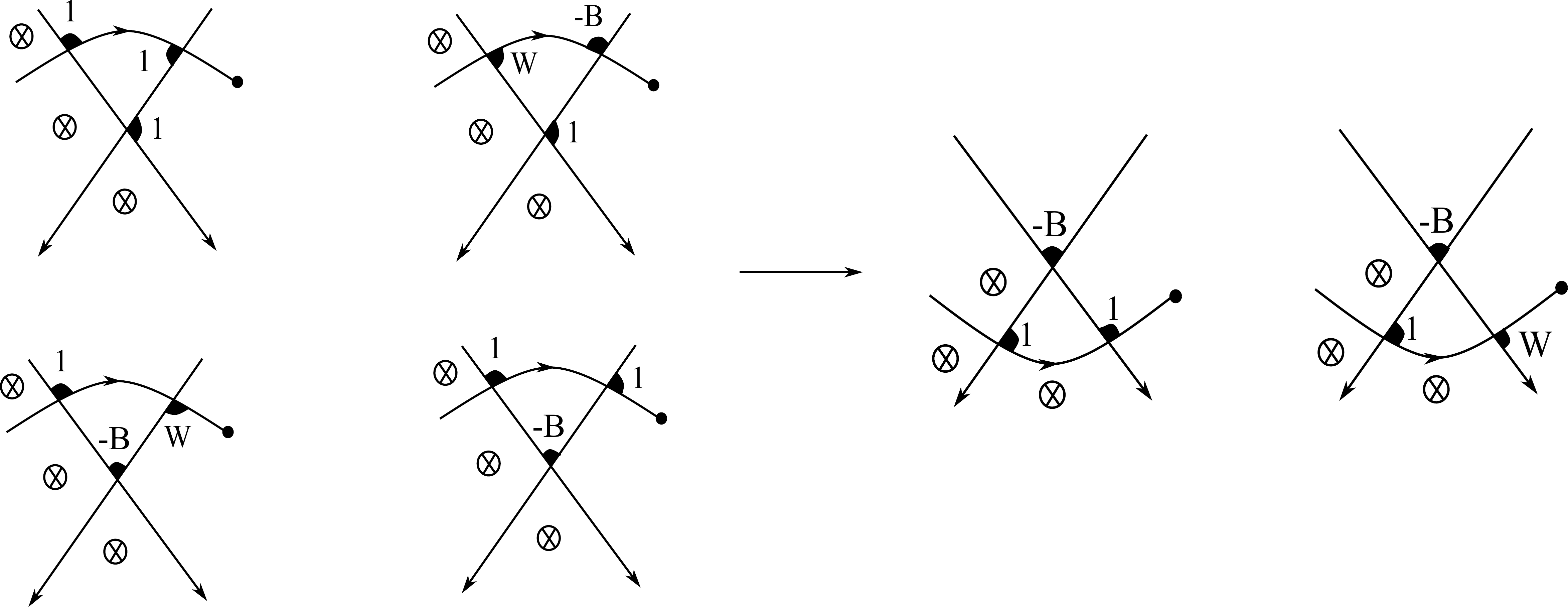}
\caption{An RIII move}
\label{fig:invariance3-3}
\end{figure}

\end{proof}

\begin{theorem}\cite{FKT}
Let $L$ be an oriented link diagram in $S^2$. The Alexander-Conway polynomial of $L$ is equal the potential of a starred link diagram obtained by endowing a pair of adjacent regions of $L$ with stars, under the assumption $W~=~B^{-1}$ on the region labels.

\end{theorem}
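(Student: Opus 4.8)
The plan is to reduce the statement to the state-sum model of the Alexander--Conway polynomial from \cite{FKT} by checking that the constructions of the present paper specialize to it. First, the bookkeeping: a connected link diagram $L$ in $S^2$ has $f=n+2$ by Proposition \ref{prop:euler}(1) with $g=0$, so a starred diagram is obtained by decorating two regions, and these must be \emph{adjacent} regions $R,R'$ for the resulting square matrix to be the one Alexander works with. By the definition of the potential matrix of a starred diagram, decorating $R$ and $R'$ amounts exactly to deleting from the full potential matrix $M_L^{\mathrm{univ}}$ the columns indexed by $R$ and $R'$; write the resulting $n\times n$ matrix as $M_L[R,R']$. Proposition \ref{prop:perm} then gives $\nabla_L(W,B)=\mathrm{Perm}\big(M_L[R,R']\big)$, so the states of the starred diagram, the nonzero terms of the permanent expansion, and the classical \emph{states} of $L$ in the sense of \cite{FKT} (flat diagrams in which every region other than $R,R'$ marks exactly one incident crossing) are all in natural correspondence. (For a split diagram with $k$ components one stars $k+1$ regions and reduces to the connected case, just as for the classical Conway polynomial.)

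Next I would identify the weights. Under the specialization $W=B^{-1}$, the local region labels of Figure \ref{fig:labelgeneral} become exactly the vertex weights of Figure \ref{fig:WWintro} with $z=W-W^{-1}$. These are, by design, the weights obtained from Alexander's quadrant labels $x,-x,1,-1$ by absorbing the black-hole sign into the weight of the black-hole quadrant and then performing the Conway substitution: there is a global sign $\epsilon$, depending only on the chosen orderings of crossings and regions, such that for every state $S$ one has $\langle L\,|\,S\rangle=\epsilon\,(-1)^{b(S)}\,w(S)$ after the substitution, where $b(S)$ is the number of black holes of $S$ and $w(S)$ is the corresponding product of Alexander's quadrant labels.

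The crux is the Clock Theorem of \cite{FKT}: for states of a link diagram in $S^2$ one has $(-1)^{b(S)}=\epsilon\,\mathrm{sgn}\big(\sigma(S)\big)$, where $\sigma(S)$ is the permutation by which the regions choose crossings. Combining this with the two preceding paragraphs,
\begin{align*}
\nabla_L(W,W^{-1})
&=\mathrm{Perm}\big(M_L[R,R']\big)=\sum_{S}\langle L\,|\,S\rangle\\
&=\epsilon\sum_{S}\mathrm{sgn}\big(\sigma(S)\big)\,w(S)=\epsilon\,\mathrm{Det}(M),
\end{align*}
where $M$ is Alexander's crossing matrix with the columns for $R,R'$ deleted and the Conway substitution applied. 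It then remains to fix the normalization --- to check that $\epsilon\,\mathrm{Det}(M)$ is the genuine Alexander--Conway polynomial, not merely equal to it up to a unit. For this I would verify directly from the local weights the Conway skein relation $\nabla_{K_+}-\nabla_{K_-}=z\,\nabla_{K_0}$ at a single crossing (only the four quadrant weights at that crossing change, and the two ``source/sink'' quadrant weights differ by $W-W^{-1}=z$, yielding exactly the $K_0$ contribution) together with the value $1$ on the unknot; Reidemeister invariance is Theorem \ref{thm:invariance} at $WB=1$, and independence of the choice of adjacent pair $R,R'$ follows as in \cite{FKT}. Since the skein relation and the value on the unknot determine the Conway polynomial, this forces $\epsilon\,\mathrm{Det}(M)=\nabla_L$.

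The main obstacle is the third step --- the agreement of black-hole parity with permutation sign --- which is precisely the Clock Theorem of \cite{FKT} and which I would cite rather than reprove. Everything else is faithful translation: that starring two adjacent regions coincides with deleting two adjacent columns, and that the $W,B$ labels at $WB=1$ agree with the Formal Knot Theory vertex weights, are routine, though one must track the single global sign $\epsilon$ and any overall power of $W$ to land on an honest equality of polynomials in the standard Conway normalization.
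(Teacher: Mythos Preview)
Your proposal is correct and follows essentially the same route as the paper: the paper's own proof is just a pointer to the discussion in Section~\ref{sec:introduction} and a citation of \cite{FKT}, and what you have written is precisely a faithful expansion of that outline (states $\leftrightarrow$ permanent terms, black-hole parity $\leftrightarrow$ permutation sign via the Clock Theorem, and the $W=B^{-1}$ weights of Figure~\ref{fig:WWintro} absorbing the sign so that the permanent computes $\nabla_K$). Your extra step of pinning down the normalization via the skein relation and the unknot value is a nice supplement, though not strictly needed once one accepts the state-sum model of \cite{FKT} as the definition of the Alexander--Conway polynomial.
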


\begin{proof}We discuss the relation between the Alexander-Conway polynomial of an oriented link and the state-sum polynomial briefly in Section \ref{sec:introduction}.
See \cite{FKT} for details.

\end{proof}






 \begin{definition}\normalfont
 
 Let $L$ be an oriented starred link or linkoid diagram.  The potential of $L$ is called the \textit{mock Alexander polynomial} of $L$ when $W = B^{-1}$. 

\end{definition}

\begin{corollary}
 The mock Alexander polynomial is an invariant of oriented starred links or linkoids in a surface of genus $g$. 
\end{corollary}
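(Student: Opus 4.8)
The plan is to observe that this corollary is an immediate specialization of Theorem~\ref{thm:invariance}. By definition, the mock Alexander polynomial of an oriented starred link or linkoid diagram $L$ is the potential $\nabla_{L}(W,B)$ evaluated under the constraint $W = B^{-1}$, i.e.\ it is the Laurent polynomial in a single variable obtained by substituting $B = W^{-1}$ into $\nabla_{L}(W,B)$. This substitution is exactly the relation $WB = 1$ appearing in the hypothesis of Theorem~\ref{thm:invariance}.

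Since a starred link or linkoid in a surface of genus $g$ is by definition an equivalence class of starred diagrams under star equivalence, to prove the corollary it suffices to show that the mock Alexander polynomial takes the same value on any two star equivalent diagrams. But Theorem~\ref{thm:invariance} already establishes that $\nabla_{L}(W,B)$ is unchanged under star equivalence once $WB=1$ is imposed; restricting to this locus and reading the result as a one-variable Laurent polynomial therefore yields a well-defined function on star equivalence classes. Hence the mock Alexander polynomial is an invariant of oriented starred links and linkoids in a surface of genus $g$.

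The only points that warrant a remark — and none of them is a genuine obstacle — are: (i) that the specialization $B = W^{-1}$ is well-defined on the Laurent ring $\mathbb{Z}[W^{\pm 1}, B^{\pm 1}]$, which it is, being the quotient by the principal ideal $(WB-1)$; and (ii) that the equivalence relation invoked here is precisely the one under which Theorem~\ref{thm:invariance} was proved, namely Reidemeister moves performed away from starred regions or crossings together with ambient isotopy of the surface, with the moves of Figure~\ref{fig:starredmoves} forbidden so that the number of starred regions or crossings is preserved. Both facts are immediate from the definitions, so the corollary follows from Theorem~\ref{thm:invariance} with no additional work.
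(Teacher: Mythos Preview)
Your proof is correct and takes essentially the same approach as the paper: the paper's own proof is simply the one-line remark that the corollary follows from Theorem~\ref{thm:invariance}. Your additional remarks about the well-definedness of the specialization $B=W^{-1}$ and the identification of the relevant equivalence relation are sound but not strictly necessary, as the paper treats them as immediate from the definitions.
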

\begin{proof}
This follows from Theorem \ref{thm:invariance}.
\end{proof}
It follows from Proposition \ref{prop:perm} that the mock Alexander polynomial of $L$, 
$\nabla_{L} (W)$ is the permanent of  $M_{L}$, where $M_{L}$ is the potential matrix of $L$ with entries $0, 1, W^{\pm 1}$ or $1 \pm W^{\pm 1}$. In the following examples, $\nabla_{L}(W)$ is calculated as the permanent of the potential of $L$.

\begin{examples}\normalfont

\begin{enumerate}

\item Consider the horizontal trivial oriented 2-tangle $T_0$. We add $n \geq 1$ positive twists to $T_0$.  Let $K_n$ denote the numerator closure of the resulting tangle, endowed with stars at the interior regions added by the closure. It is clear that we obtain an oriented starred link diagram. See Figure \ref{fig:k_n} for the starred link diagram obtained in this way.
 
It can be verified from Figure \ref{fig:k_n} that there are exactly two states of $K_n$ for every $n \geq 1$, and the contributions of these states to $\nabla_{K_{n}}(W)$ are $W^n$ and $(-1)^{n} W^{-n}$, respectively.  Therefore, 

$$\nabla_{K_{n}}(W) = W^{n} + (-1)^{n} W^{-n,}$$
 for every $n \geq 1$.

By direct  calculation, one can also see that the following equality holds for every $n \geq 2$.

$$\nabla_{K_{n+1}} (W) = \nabla_{K_{n-1} } + (W - W^{-1}) \nabla_{K_n}. $$

\begin{figure}[H]
\centering
\includegraphics[width=.85\textwidth]{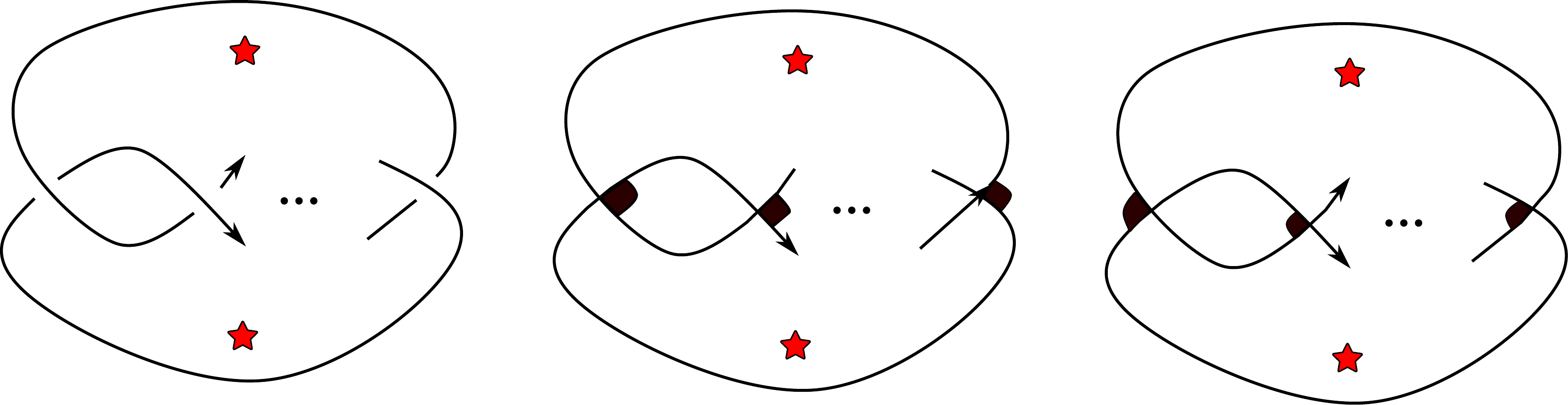}
\caption{ Starred link diagram $K_n$ with nonadjacent stars, for some $n \geq 1.$}
\label{fig:k_n}
\end{figure}

Let $K$ be the starred trefoil knot with its two non-adjacent regions starred, as shown in Figure  \ref{fig:stardiagonal}. Notice that $K$ is star equivalent to $K_3$ discussed above. We verify  that the permanent of $M_K$ coincides with the mock Alexander polynomial calculation of $K_3$.

\begin{figure}[H]
\centering
\includegraphics[width=.5\textwidth]{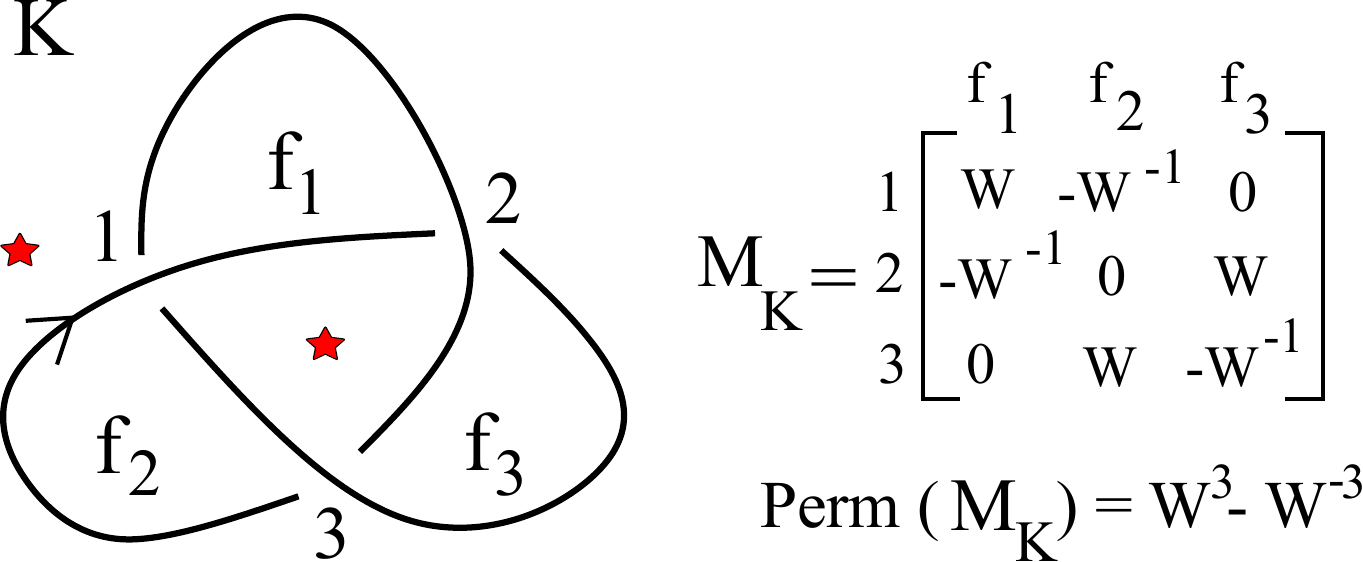}
\caption{$K_3$ is a starred trefoil diagram.}
\label{fig:stardiagonal}
\end{figure}
\begin{figure}[H]
\centering
\includegraphics[width=.5\textwidth]{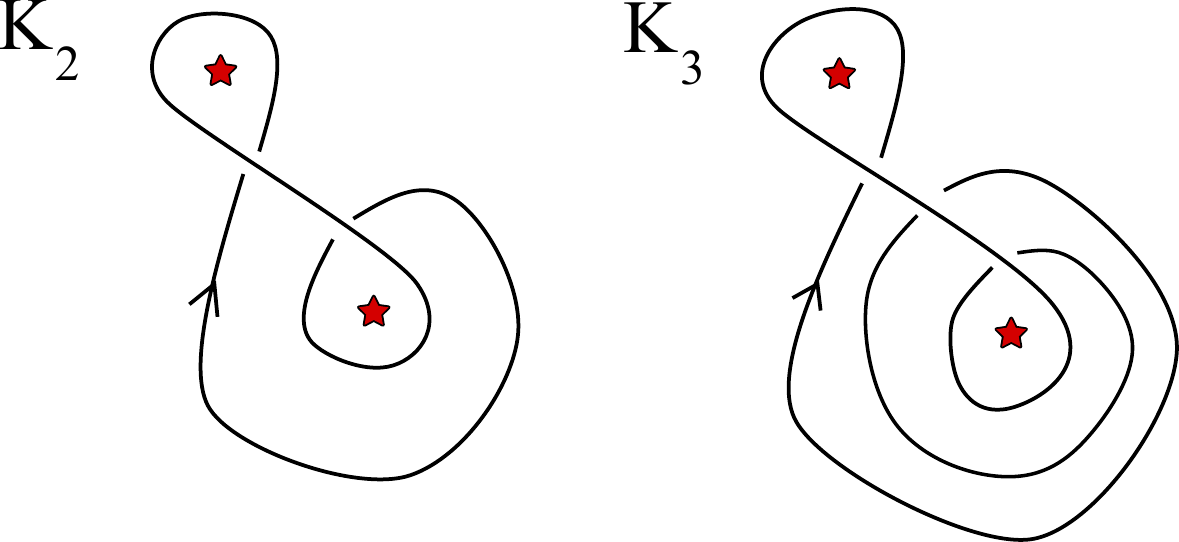}
\caption{Starred knot diagrams in spiral form.}
\label{fig:spiral}
\end{figure}

\item 
Let $K_n$ denote the spiral knot diagram with $n$ crossings with its stars placed in twisted regions as illustrated in Figure \ref{fig:spiral} for $n=2,3$. The potential matrix of $K_n$, $n \geq 2$ is given as
 \[ M_{K_{n}}= \begin{bmatrix}
W - W^{-1} & 1& 0 & \dots& & & 0\\
1 &W-W^{-1}& 1&0 &\dots& & \\
0&1& W-W^{-1}&1&0& \dots& \\
\vdots&0&1&W-W^{-1}&1&0&\dots \\
0&\dots&  &   & 0 &1&W-W^{-1}\\

\end{bmatrix}.
\]


By direct calculation utilizing the permanent formula, we find
$$\nabla_{K_{2}}(W) = 1 + (W- W^{-1}) ^{2}$$ and  
$$\nabla_{K_{3}}(W) = 2(W - W^{-1}) + (W - W^{-1})^3.$$ 
Assuming that $\nabla_{K_{-1}} (W) = 0$,  $\nabla_{K_{0}} (W) = 1$ and $\nabla_{K_{1}} (W) = W - W^{-1}$, we have the following recursive relation for the mock Alexander polynomial of $K_n$, for any $n\geq 1$,
$$\nabla_{K_{n+1}} (W) = (W - W^{-1}) \nabla_{K_{n}} (W) + \nabla_{K_{n-1}} (W),$$ 
that follows directly from the form of the permanent matrix for the invariants.

\end{enumerate}

\end{examples}

\begin{theorem}\label{thm:connect}
 
Let $L$ be a split linkoid diagram in $S^2$ that consists of only two disjoint knotoid components.

Let $L_1$ and $L_2$ be two admissible linkoid diagrams obtained from $L$ by an RII move that connects the disjoint components by pushing the components over or under each other. Then,
$$\nabla_{L_1} (W) = \nabla_{L_2} (W).$$

\end{theorem}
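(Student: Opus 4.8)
The plan is to reduce the statement to the invariance theorem (Theorem~\ref{thm:invariance}) by exhibiting, for any two such diagrams $L_1$ and $L_2$, a finite sequence of Reidemeister moves carrying $L_1$ to $L_2$ that passes \emph{only through connected diagrams}. The reason this suffices is the following observation: by Corollary~\ref{cor:admissablegen}(ii), a linkoid diagram in $S^2$ with exactly two knotoid components is admissible precisely when it is connected, and any such diagram carries zero stars; hence along such a sequence every intermediate diagram is a (zero-starred) admissible diagram, star equivalence reduces to ordinary Reidemeister equivalence, and Theorem~\ref{thm:invariance} with $W=B^{-1}$ forces the potential to be constant along the sequence. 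Thus the whole problem becomes the purely combinatorial statement that any two ways of connecting two split knotoid components by an RII move are linked by moves through connected diagrams.

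First I would dispose of the case in which $L_1$ and $L_2$ arise from the \emph{same} type of RII move, i.e.\ both push $\beta$ under $\alpha$ or both push $\beta$ over $\alpha$ (possibly at different sites and with the connecting bigon placed differently). This case is exactly Proposition~\ref{prop:split}, which supplies the required path through connected diagrams, so $\nabla_{L_1}=\nabla_{L_2}$ follows at once.

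Next I would handle the remaining case, where (after relabelling which of the two strands is regarded as the one that moves) $L_1$ is obtained by pushing $\beta$ \emph{under} $\alpha$ and $L_2$ by pushing $\beta$ \emph{over} $\alpha$. Starting from $L_1$, perform one extra RII move that creates a second connecting bigon, this time with $\beta$ passing \emph{over} $\alpha$, at a site disjoint from the first bigon; call the result $L_1'$. Since $L_1'$ is connected it is admissible, so $\nabla_{L_1}=\nabla_{L_1'}$. Now undo the original ``under'' bigon by the reverse RII move: the diagram obtained, $L_1''$, is still connected because the new ``over'' bigon continues to join $\alpha$ and $\beta$, so $\nabla_{L_1'}=\nabla_{L_1''}$. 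But $L_1''$ is now a diagram obtained from $L$ by a single RII move pushing $\beta$ over $\alpha$, so the same-type case (Proposition~\ref{prop:split}) gives $\nabla_{L_1''}=\nabla_{L_2}$. Chaining these equalities completes the argument; the cases ``push $\alpha$ over/under $\beta$'' are identical after renaming the moving strand.

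The main obstacle, and the only place that needs real care, is precisely the bookkeeping in the opposite-handedness case: one must check that introducing the auxiliary over-bigon and then removing the original under-bigon are genuine RII moves performed away from each other and from the rest of the diagram, and --- crucially --- that every diagram produced along the way remains connected, since Theorem~\ref{thm:invariance} is only available in the admissible ($f=n$) regime. The auxiliary bigon is inserted exactly so that the diagram stays connected at the instant the original connection is destroyed; once this is verified, no computation with the state sum is needed.
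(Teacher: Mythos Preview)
Your proposal is correct and follows the same strategy as the paper: reduce to Proposition~\ref{prop:split} to obtain a sequence of Reidemeister moves through connected (hence admissible, zero-starred) diagrams, then invoke Theorem~\ref{thm:invariance}. The paper's proof is a one-line appeal to Proposition~\ref{prop:split}; your version is actually more thorough, since Proposition~\ref{prop:split} as stated only covers the case where both connecting RII moves are of the same type (both ``under'' or both ``over''), and your auxiliary-bigon maneuver explicitly and correctly handles the mixed over/under case that the paper's terse proof glosses over.
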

\begin{proof}

The argument follows from Proposition \ref{prop:split} in Section \ref{sec:decorated} that tells any two admissible linkoid diagrams obtained by connecting $L_1$ and $L_2$ are equivalent to each other through connected diagrams.

\end{proof}
\begin{note}\normalfont
By Theorem \ref{thm:connect}, we assume that the mock Alexander polynomial of a split linkoid diagram $L$ in $S^2$ consisting of only two knotoid components is the mock Alexander polynomial of any connected  diagram obtained by connecting the components of $L$ by an RII move.
\end{note}

Let $K$ be any knotoid diagram in $S^2$ with $n$ crossings. It is apparent that $K$ gives rise to $n+1$ different starred knotoid diagrams.  One can also fix the placement of the star so that it is placed at the region adjacent to the tail of the knotoid diagram. Then, the region endowed with the star is uniquely determined for any knotoid diagram and the mock Alexander polynomial of the resulting starred diagram is a polynomial assigned to the knotoid diagram.

\begin{definition}\normalfont
Let $K$ be a knotoid diagram in $S^2$. The mock Alexander polynomial of $K$ denoted by$\nabla^{\sharp}_K$, is defined to be the mock Alexander polynomial of the starred diagram obtained from $K$ by endowing the region adjacent to the tail with a star.

\end{definition}

\begin{example}\label{ex:simpleknotoid}\normalfont
We depict a simple knotoid diagram with two crossings in Figure \ref{fig:simpleknotoid}. \begin{figure}[H]
\centering
\includegraphics[width=.3\textwidth]{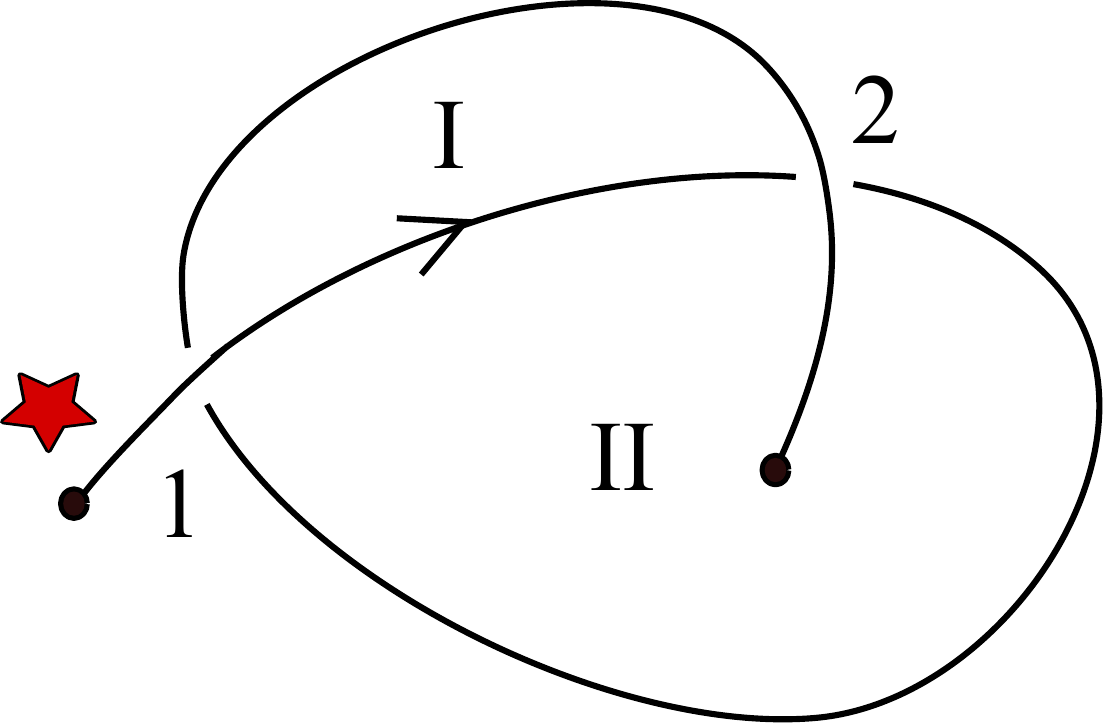}
\caption{A knotoid diagram with two crossings.}
\label{fig:simpleknotoid}
\end{figure}
It is not hard to verify that the mock Alexander polynomial of the knotoid diagram $K$, $\nabla_{K}^{\sharp} (W)$ is the permanent of $\begin{bmatrix}  W & 1 \\-W^{-1} & W +1\\
\end{bmatrix}$, 
which is equal to \\$W^2+W-W^{-1}$.

\end{example}

\begin{proposition}
The mock Alexander polynomial $\nabla^{\sharp}_K$ is an invariant of knotoids in $S^2$. 

\end{proposition}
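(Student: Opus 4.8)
The plan is to deduce the statement from Theorem~\ref{thm:invariance} and its corollary, after reducing to a single Reidemeister move and splitting into two cases according to whether the move ``sees'' the region adjacent to the tail. First I would record that $\nabla^{\sharp}$ is well defined on diagrams: if $K$ is a knotoid diagram in $S^2$ with $n$ crossings then $K$ is connected (it has one long component), so Proposition~\ref{prop:euler} gives $f - n = 2 - 2\cdot 0 - 1 = 1$, where $f$ is the number of regions; hence starring the unique region adjacent to the tail produces an admissible starred knotoid diagram $K^{\star}$ with $f = n$, and $\nabla^{\sharp}_{K}$, which by definition equals $\nabla_{K^{\star}}(W)$ taken with $W = B^{-1}$, is defined for every diagram. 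It then remains to show this quantity is unchanged under a Reidemeister move $K \leadsto K'$ performed away from the endpoints; iterating over a sequence of such moves yields the Proposition.

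Next I would observe that the region adjacent to the tail is never destroyed by such a move: an RI (respectively RII) move only creates or deletes a monogon (respectively two bigons), and these regions are bounded solely by crossings, hence are never incident to an endpoint, while an RIII move leaves the set of regions unchanged. So, writing $R$ and $R'$ for the regions adjacent to the tail of $K$ and of $K'$, the region $R'$ is exactly the region of $K'$ containing a neighbourhood of the tail, and $R'$ agrees with $R$ outside the move disk. If the move disk is disjoint from the interior of $R$, then $R = R'$ and the passage $K^{\star} \leadsto K'^{\star}$ is a star equivalence, so $\nabla^{\sharp}_{K} = \nabla^{\sharp}_{K'}$ by invariance of the mock Alexander polynomial under star equivalence (Theorem~\ref{thm:invariance} and its corollary). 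If the move disk meets the interior of $R$, then the move is one of the configurations already analysed in the proof of Theorem~\ref{thm:invariance}: an RI kink pushed into $R$ (the new monogon receives a marker of trivial weight, so $\nabla$ is unaffected); an RII move with a starred region among the local regions of the move site, including the subcases where a strand of the move site is the arc incident to the tail (Figures~\ref{fig:starredR2} and \ref{fig:starredendpoints}); or an RIII move one of whose local regions is $R$, including the tail-arc case of Figure~\ref{fig:invariance3-1} and, when $R$ is not incident to the tail arc, the single-starred-region RIII cases treated exactly as in \cite{FKT}. In every one of these the proof of Theorem~\ref{thm:invariance} shows that $\nabla$ with $W = B^{-1}$ is preserved once the star is placed on the region of $K'$ adjacent to the tail. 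Hence $\nabla^{\sharp}_{K} = \nabla^{\sharp}_{K'}$ in all cases.

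The main obstacle is the bookkeeping in the second case: one must check that the Reidemeister moves whose move disk meets the tail region are exactly the near-endpoint and starred-region configurations already verified for Theorem~\ref{thm:invariance}, and, crucially, that in each such configuration the star lands on the region of the resulting diagram that is adjacent to the tail, not merely on some region touching the tail arc. The delicate point is that the tail region can be large and may be met by a move occurring far from the tail arc; one must then recognise that such a move is still an instance of ``the starred region participates in an RI/RII/RIII move'', which is precisely why routing everything through the figures in the proof of Theorem~\ref{thm:invariance} is the economical route rather than redoing the local state computations.
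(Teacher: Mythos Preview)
Your proposal is correct and takes essentially the same approach as the paper: the paper's proof is the single sentence ``It follows from Theorem~\ref{thm:invariance},'' and your argument is precisely a careful unpacking of why that deduction is legitimate, checking that the region adjacent to the tail survives each Reidemeister move and that every such move is one of the configurations already handled in the proof of Theorem~\ref{thm:invariance}. Your additional bookkeeping (well-definedness via Proposition~\ref{prop:euler}, the case split on whether the move disk meets the tail region, and the identification of the relevant figures) is exactly the detail the paper leaves implicit.
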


\begin{proof}

It follows from Theorem \ref{thm:invariance}.

\end{proof}





 \subsubsection{\bf Reversal and mirror behavior of mock Alexander polynomials}\label{sec:symmetry}
\begin{proposition}\label{prop:rev}
Let $L$ be an oriented starred link or linkoid diagram in a surface and $\overline{L}$ be the starred diagram obtained from $L$ by reversing the orientation on each of its components, without changing the placement of the star of $L$. We have the following relation between the mock Alexander polynomials of $L$ and $\overline{L}$.
$$\nabla_{\overline{L}} (W) = \nabla _{L}(-W^{-1}).$$

\end{proposition}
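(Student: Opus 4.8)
The plan is to track what happens to each individual state weight when the orientation is reversed. Reversing the orientation of every component does not change the underlying universe of $L$, nor the placement of the star, nor the combinatorial data of which region is incident to which crossing. Hence the set of states $\mathcal{S}$ of $\overline{L}$ is in canonical bijection with the set of states of $L$: a state is just an assignment of a marked incident crossing to each unstarred region. So it suffices to compare, state by state, the weight $\langle L \mid S\rangle$ with $\langle \overline{L}\mid S\rangle$, and then sum.

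First I would examine the effect of global orientation reversal on the local picture at a single crossing. Reversing both strands at a crossing sends a positive crossing to a positive crossing and a negative crossing to a negative crossing (the sign is unchanged), but it rotates the roles of the four incident quadrants by $180^\circ$. Consulting the labeling convention in Figure \ref{fig:labelgeneral}, I would check that this rotation has the following effect on the four local labels: the quadrant that carried $W$ now carries $-W^{-1}$, the one that carried $-W^{-1}$ now carries $W$, and the two quadrants carrying $1$ and $-1$ (i.e.\ the $B$-type labels under the specialization $B = W^{-1}$, appearing with signs) are likewise interchanged up to sign. The cleanest way to phrase this is: if $\ell$ is the label of a quadrant at a crossing $c$ of $L$, then the label of that same quadrant at $c$ in $\overline{L}$ is obtained from $\ell$ by the substitution $W \mapsto -W^{-1}$ (equivalently $W\mapsto -W^{-1}$, $B \mapsto -B^{-1}$ before specializing). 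Here the black-hole / sign bookkeeping built into the Formal Knot Theory weights of Figure \ref{fig:WWintro} is exactly what makes the substitution $W\mapsto -W^{-1}$ rather than merely $W \mapsto W^{-1}$ come out right: a quadrant where both strands point inward becomes, after reversal, a quadrant where both strands point outward, and the $\pm$ sign convention converts this into the extra minus sign.

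Granting this local statement, the global conclusion is immediate: for any state $S$,
$$\langle \overline{L}\mid S\rangle = \langle L\mid S\rangle\big|_{W \mapsto -W^{-1}},$$
since $\langle L\mid S\rangle$ is by definition the product of the labels at the marked quadrants, and each factor transforms by the same substitution. Summing over all $S \in \mathcal{S}$ and using Proposition \ref{prop:perm} (or simply the definition of $\nabla$) gives $\nabla_{\overline{L}}(W) = \nabla_L(-W^{-1})$. Strictly speaking one should also note that the invariance Theorem \ref{thm:invariance} guarantees that this identity, established at the diagram level, descends to starred links and linkoids; but that is automatic once it holds for diagrams.

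The main obstacle is the careful verification of the local claim at a crossing — namely, getting the $180^\circ$ rotation of quadrant labels and the attendant signs exactly right from Figure \ref{fig:labelgeneral}, including the subtlety that at a crossing incident to an endpoint of a linkoid component a region may carry a sum of two labels, so one must confirm that the substitution $W\mapsto -W^{-1}$ is compatible with that sum (it is, being a ring homomorphism evaluation). Once the bookkeeping at a single crossing is pinned down, the rest is a one-line summation argument.
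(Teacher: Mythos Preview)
Your argument is correct and follows essentially the same approach as the paper: the paper's proof consists of a single reference to Figure~\ref{fig:reverse} showing the effect of orientation reversal on the local quadrant labels at a crossing, which is exactly the local claim you carefully articulate and then sum over states. Your write-up is more detailed (including the observation about sums of labels at endpoint-adjacent regions), but the underlying idea is identical.
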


\begin{proof}
The local labels assigned to local regions at a positive and a negative crossing of $L$ and the corresponding crossings of $\overline{L}$, are shown in Figure \ref{fig:reverse}. \begin{figure}[H]
\centering
\includegraphics[width=.35\textwidth]{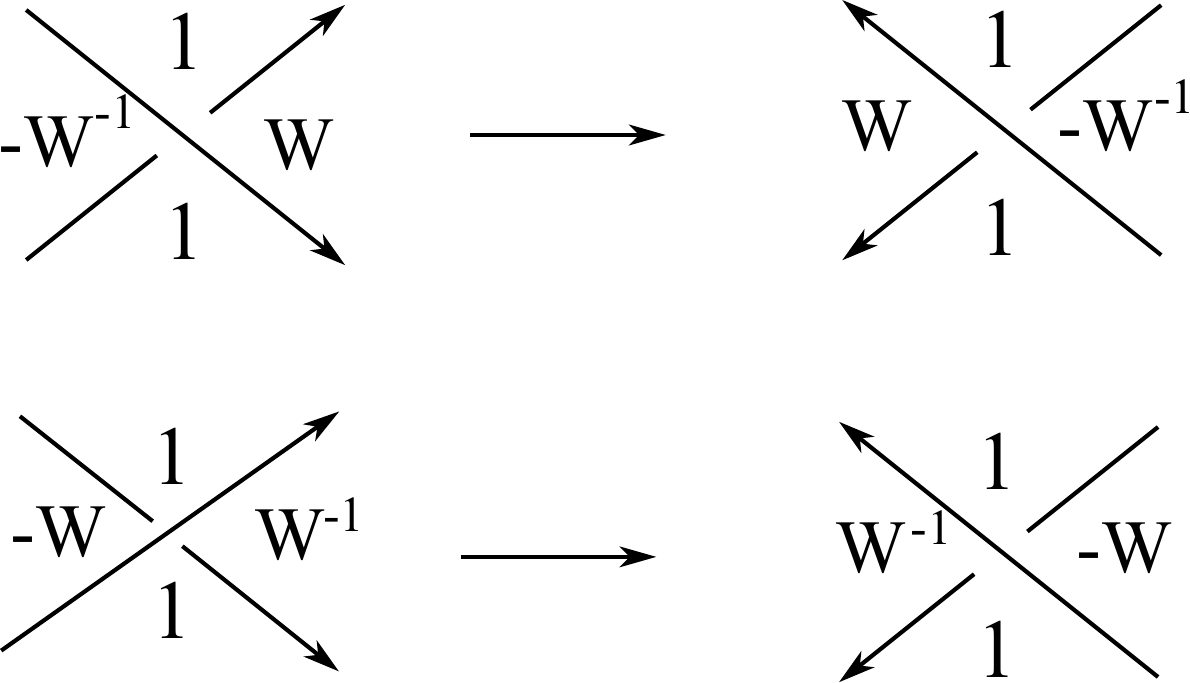}
\caption{The effect of reversing the orientation on region labels.}
\label{fig:reverse}
\end{figure}

\end{proof}

\begin{proposition}
Let $L$ be an oriented starred link or linkoid diagram in a surface. The mirror image of $L$, $L^*$ is obtained by only switching every crossing of $L$. We have the following relation between the mock Alexander polynomials of $L$ and $L^*$.

$$\nabla_{L^*}(W) =\nabla _{L} (W^{-1} ).$$

\end{proposition}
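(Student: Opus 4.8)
The plan is to exploit the fact that passing from $L$ to its mirror $L^{*}$ changes none of the combinatorial data on which the state sum is built except the local labels at the crossings. First I would record that the underlying universe $U_{L}$, the set of regions, the incidence between regions and crossings, and the placement of the stars are all identical for $L$ and $L^{*}$: by definition $L^{*}$ is obtained by only switching every crossing, which exchanges the over- and under-arc at each crossing while leaving the flat diagram and all orientations untouched, and does not depend on the ambient surface $\Sigma_{g}$. Consequently a state of $L$ is literally the same piece of data as a state of $L^{*}$ — a choice, for each non-starred region, of one incident non-starred crossing to carry its marker — so there is a canonical bijection $\mathcal{S}(L)\leftrightarrow\mathcal{S}(L^{*})$, and it suffices to compare the weights $\langle L \,|\, S\rangle$ and $\langle L^{*} \,|\, S\rangle$ term by term.

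Next I would fix a state $S$ and analyse it crossing by crossing. Each factor of $\langle L \,|\, S\rangle$ is the label, read off from Figure \ref{fig:labelgeneral}, of the quadrant of some crossing $c$ that carries a marker of $S$; the corresponding factor of $\langle L^{*}\,|\,S\rangle$ is the label of the quadrant in the same geometric position at the switched crossing $c^{*}$. The key observation is that switching $c$ turns a positive crossing into a negative one (and conversely) without disturbing the strand orientations, and that — this is the point to be verified directly against Figure \ref{fig:labelgeneral} — the four quadrant labels of a negative crossing are obtained from those of the positive crossing with the same orientation data by the substitution $W\mapsto W^{-1}$ applied at each of the four positions (equivalently $B\mapsto B^{-1}$, which under the mock Alexander specialization $WB=1$ is the same operation). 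In particular no reshuffling of the quadrants occurs, in contrast with the orientation-reversal of Proposition \ref{prop:rev}: the labels $1$ and $-1$ are fixed, while $W\leftrightarrow W^{-1}$ and $-W^{-1}\leftrightarrow -W$. The same remark covers a region incident to an endpoint of a linkoid component, whose matrix entry is a sum of two such labels.

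From this local comparison it follows that for every state $S$ we have $\langle L^{*}\,|\,S\rangle = \langle L\,|\,S\rangle\big|_{W\mapsto W^{-1}}$, and summing over the common index set $\mathcal{S}(L)=\mathcal{S}(L^{*})$ gives
$$\nabla_{L^{*}}(W)\;=\;\sum_{S}\langle L^{*}\,|\,S\rangle\;=\;\Big(\sum_{S}\langle L\,|\,S\rangle\Big)\Big|_{W\mapsto W^{-1}}\;=\;\nabla_{L}(W^{-1}),$$
as claimed. As a consistency check, applying the identity twice recovers $\nabla_{L^{**}}=\nabla_{L}$ since $L^{**}=L$, and it agrees with $\nabla_{K_{n}}(W)=W^{n}+(-1)^{n}W^{-n}$ for the positively twisted closures $K_{n}$, whose mirrors are the negatively twisted closures. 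The hard part is entirely in the middle paragraph: one must confirm that the labelling convention of Figure \ref{fig:labelgeneral} genuinely realizes crossing-switching as the bare substitution $W\mapsto W^{-1}$ at each quadrant, with no accompanying permutation of quadrants; once that finite case check is in place, the rest is bookkeeping that parallels the proof of Proposition \ref{prop:rev}.
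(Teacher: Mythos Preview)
Your proposal is correct and follows essentially the same approach as the paper: both arguments rest on the single observation that switching a crossing leaves the flat diagram, regions, stars, and hence the set of states unchanged, while the quadrant labels at each crossing transform by $W\mapsto W^{-1}$ (the paper simply points to Figure~\ref{fig:mir} for this). Your write-up is considerably more explicit about the bijection of states and the term-by-term comparison, but the underlying idea is identical.
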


\begin{proof}
The change in local labels in the local regions adjacent to a crossing of $L$ by switching the crossing, is shown in Figure \ref{fig:mir}.
\begin{figure}[H]
\centering
\includegraphics[width=.35\textwidth]{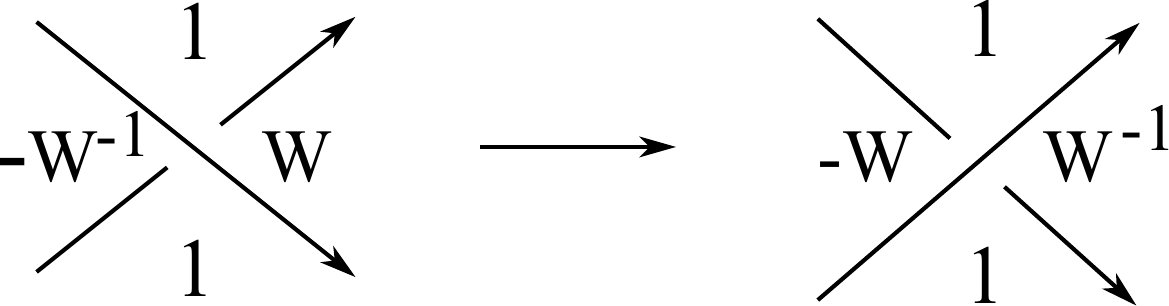}
\caption{The change of region labels under mirror symmetry.}
\label{fig:mir}
\end{figure}

\end{proof}

Here we present a topological specialization of Conjecture \ref{conj:conj1} for the mock Alexander polynomial of a knotoid in $S^2$.

\begin{conjecture}\label{conj:ii} \normalfont
Let $K$ be a knotoid diagram in $S^2$ with its tail lying in the exterior. Let $K_ {ext}$ denote the starred knotoid diagram with a star placed in the exterior region of $K$, and $K_{in}$ denote the starred knotoid diagram with a star placed in the region where the head of $K$ is located. Then, we have the following equality.\begin{center}

$\nabla_{K_{ext}} (W) = \nabla_{K_ {in}}  (-W^{-1})$.

\end{center}

\end{conjecture}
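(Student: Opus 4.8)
\emph{Towards Conjecture \ref{conj:ii}.}
The plan is to first reduce the statement to the ``$WB=1$'' specialization of Conjecture \ref{conj:conj1}, and then to prove that specialization by induction on the number of crossings using the skein relations of Section \ref{sec:skein}. For the reduction, observe that since the tail of $K$ lies in the exterior region, the region incident to the tail \emph{is} the exterior region, so $K_{ext}$ is exactly the diagram called $K_1$ in Conjecture \ref{conj:conj1} and $K_{in}$ is the diagram $K_2$. Setting $B=W^{-1}$ in the identity $\nabla_{K_1}(W,B)=\nabla_{K_2}(-B,-W)$ and using $(-W^{-1})(-W)=1$, the right-hand side becomes $\nabla_{K_2}$ evaluated under the mock substitution at the variable $-W^{-1}$, i.e.\ $\nabla_{K_{in}}(-W^{-1})$; this is precisely the assertion of Conjecture \ref{conj:ii}. (This specialization is the content of \cite{Woutpaper}; what follows indicates how one would prove it from scratch.)

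The inductive engine rests on the elementary observation that the Conway variable $z=W-W^{-1}$ is fixed by the involution $W\mapsto -W^{-1}$, since $(-W^{-1})-(-W^{-1})^{-1}=-W^{-1}+W=z$. Combining this with the reversal formula $\nabla_{\overline L}(W)=\nabla_L(-W^{-1})$ of Proposition \ref{prop:rev}, one sees moreover that Conjecture \ref{conj:ii} is equivalent to the statement that the mock Alexander polynomial $\nabla^{\sharp}$ of a knotoid in $S^2$ is invariant under reversal of orientation. Either way, the strategy is: choose a crossing $c$ of $K$ that is non-separating and disjoint from the two terminal arcs, apply the skein relation $\nabla_{K_+}-\nabla_{K_-}=z\,\nabla_{K_0}$ of Section \ref{sec:skein} at $c$ while keeping the star in the tail region throughout, and simultaneously apply the corresponding relation with the star kept in the head region. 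Because $z$ is invariant under $W\mapsto -W^{-1}$, the two recursions are carried to one another by that substitution, so once the desired identity is known for the three smaller diagrams $K_+,K_-,K_0$ (with the two star placements matched) it follows for $K$. The base case is the trivial knotoid, whose two starred diagrams have $0\times 0$ potential matrix and $\nabla=1$, which is fixed by $W\mapsto -W^{-1}$.

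Two points require care, and the second is where the real work lies. First, one must be able to reduce to the situation where such a crossing $c$ exists: if every crossing of $K$ either separates the universe or is trapped against a terminal arc, then $K$ can be simplified by Reidemeister moves (whose invariance is Theorem \ref{thm:invariance}) until it is trivial, and the split configurations that may appear along the way are handled by Theorem \ref{thm:connect}. Second, and more seriously, the skein identities available for \emph{linkoids} in $S^2$ are, as Section \ref{sec:skein} records, only variations of the Conway relation, and the oriented smoothing $K_0$ may split off closed components, changing which regions must carry stars; verifying that the region incident to the tail (resp.\ head) of $K_+$ is correctly identified with the analogous region of $K_-$ and of $K_0$, across all resolution types and all positions of $c$ relative to the rest of the diagram, is the principal obstacle. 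A more structural alternative would be to work directly with the rectangular potential matrix of the unstarred knotoid and the two maximal ``permanents'' obtained by deleting the tail-region and head-region columns; the natural tool there is an extension of the Clock Theorem of \cite{FKT} to linkoid universes (anticipated in Section \ref{sec:discussion}), but one must be careful because, unlike the determinant, the permanent does not vanish on a matrix with a repeated column, so a linear syzygy among the columns of the potential matrix does not translate directly into a relation between the two permanents.
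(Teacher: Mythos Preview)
The paper does not prove this statement at all: it is stated as a conjecture, with the remark that a proof ``will appear in a joint paper'' \cite{Woutpaper}. So there is no paper proof to compare against; the question is only whether your proposal actually establishes the result.

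It does not, and you essentially say so yourself. Your skein induction has two genuine gaps, neither of which is closed.

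First, the induction scheme is not set up so that it terminates. The relation $\nabla_{K_+}-\nabla_{K_-}=z\,\nabla_{K_0}$ at a crossing $c$ does not by itself determine $\nabla_{K_+}$ from diagrams with fewer crossings: $K_-$ has the same crossing number as $K_+$. The standard remedy is a double induction (on crossing number and on the number of crossing changes needed to reach a descending/trivial diagram), but for knotoids you would first have to prove an unknotting statement of the form ``any knotoid diagram becomes trivial after finitely many crossing changes at non-separating crossings away from the terminal arcs,'' together with your claim that a diagram all of whose crossings are separating or terminal can be Reidemeister-reduced to the trivial knotoid. Neither of these auxiliary facts is proved, and the second is not obvious.

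Second, and more seriously, the inductive hypothesis does not match the inductive step. The smoothed diagram $K_0$ is in general not a knotoid but a knotoid multi-diagram (one arc component together with closed components), and may be split. For such diagrams the notions ``$K_{ext}$'' and ``$K_{in}$'' are not defined in the paper, the star bookkeeping changes (by Corollary~\ref{cor:admissablegen} and the discussion around Theorem~\ref{thm:connect}), and the skein relations of Section~\ref{sec:skein} are, as the paper says, only \emph{variations} of the Conway identity in the linkoid setting. You correctly flag this as ``the principal obstacle'' but then do not verify the needed compatibility in any of the cases. Without that verification the induction does not go through, and the proposal remains a plausible strategy rather than a proof. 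Your alternative suggestion via column-deleted permanents and a Clock-type theorem is likewise only a pointer; as you note, the permanent lacks the multilinearity/alternation that would make a column syzygy immediately useful.
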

Note that  any knotoid diagram in $S^2$ can be represented by a diagram whose tail lies at the exterior region \cite{Turaev}. So, without loss of generality, we consider such representations of spherical knotoids. The proof  of Conjecture \ref{conj:ii} will appear in a joint paper by Wout Moltmaker, Jo-Ellis Monahan and the authors of the present paper.

The following corollary follows from Proposition \ref{prop:rev} and Conjecture \ref{conj:ii}. 

\begin{corollary}
Let $K$ be a knotoid diagram in $S^2$ and $\overline{K}$ be its reversion. Then,
$$\nabla^{\sharp}_{K}(W) = \nabla^{\sharp}_{\overline{K}} (W).$$

\end{corollary}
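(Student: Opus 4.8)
The plan is to combine the two facts already established: Conjecture~\ref{conj:ii} (which we may assume) and Proposition~\ref{prop:rev}. Let $K$ be a knotoid diagram in $S^2$; by \cite{Turaev} we may assume its tail lies in the exterior region, so that $\nabla^{\sharp}_K(W)$ is by definition the mock Alexander polynomial of $K_{ext}$, the diagram starred at the exterior region. Conjecture~\ref{conj:ii} then gives $\nabla_{K_{ext}}(W) = \nabla_{K_{in}}(-W^{-1})$, where $K_{in}$ is $K$ starred at the region containing the head.

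Next I would relate $K_{in}$ to the reversion $\overline{K}$. Reversing the orientation of $K$ interchanges the roles of head and tail: the region of $K$ containing the head of $K$ is precisely the region of $\overline{K}$ containing its tail. Hence the diagram $\overline{K}$ starred at its exterior region, whose potential computes $\nabla^{\sharp}_{\overline{K}}$, is obtained from $K_{in}$ by reversing all orientations while keeping the star fixed --- that is, in the notation of Proposition~\ref{prop:rev}, $\overline{K_{in}}$ is (a diagram of) $\overline{K}$ with the star at the region adjacent to its tail. Here one must be a little careful: $\overline{K}$ need not have its tail in the exterior region, but since $K_{in}$ is starred at the head-region of $K$ --- equivalently the tail-region of $\overline{K}$ --- and $\nabla^{\sharp}$ is defined via the tail-adjacent star, this is exactly the configuration computing $\nabla^{\sharp}_{\overline{K}}$, and the mock Alexander polynomial is a genuine invariant of starred knotoids by the Corollary to Theorem~\ref{thm:invariance}. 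Applying Proposition~\ref{prop:rev} to the pair $(K_{in}, \overline{K_{in}})$ gives $\nabla_{\overline{K_{in}}}(W) = \nabla_{K_{in}}(-W^{-1})$, i.e. $\nabla^{\sharp}_{\overline{K}}(W) = \nabla_{K_{in}}(-W^{-1})$.

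Stringing the two identities together yields
$$\nabla^{\sharp}_{K}(W) = \nabla_{K_{ext}}(W) = \nabla_{K_{in}}(-W^{-1}) = \nabla^{\sharp}_{\overline{K}}(W),$$
which is the claim.

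The only genuine obstacle is the bookkeeping in the middle step: one must verify that ``star at the head-region of $K$'' and ``star at the tail-region of $\overline{K}$'' denote the same starred knotoid, and that reversing orientation does not move the star or alter which region is exterior in a way that breaks the chain. This is purely a matter of unwinding the definitions of reversion and of $\nabla^{\sharp}$, together with invariance of the mock Alexander polynomial under star equivalence; no new computation is needed, since the orientation-reversal arithmetic is entirely contained in Proposition~\ref{prop:rev} and the substitution $W \mapsto -W^{-1}$ is its own inverse.
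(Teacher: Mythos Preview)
Your proposal is correct and follows exactly the route the paper indicates: the paper merely states that the corollary ``follows from Proposition~\ref{prop:rev} and Conjecture~\ref{conj:ii}'' without spelling out the chain, and you have filled in precisely those details, including the key identification of $\overline{K_{in}}$ with the starred diagram computing $\nabla^{\sharp}_{\overline{K}}$.
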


\subsection{Skein relations}\label{sec:skein}

\begin{definition}\normalfont
Let $L$ be a link or linkoid diagram in $S^2$. We call a crossing $v$ of $L$ \textit{separating} (or \textit{nugatory}) if at least one of the resolutions of $v$ results in a split diagram. Note that in the classical case, that is if $L$ is a classical link diagram, a nugatory crossing can be removed by isotopy. On the other hand, any crossing that is adjacent to an endpoint of a linkoid diagram is separating but not nugatory in the classical sense since it cannot be removed by an isotopy move. For example, the crossing of the linkoid diagram $L_+$ depicted in Figure \ref{fig:skeinhold}, is a separating  but not a nugatory crossing in the classical sense.
\end{definition}

\begin{theorem}\label{thm:skeinthm}
Let $L_{+}, L_{-}$   and $L_{0}$ denote three starred link diagrams in $S^2$  that differ from each other at exactly one crossing $v$, as shown in Figure \ref{fig:skein}.  Assume that all the regions that are incident to the crossing $v$ are mutually distinct regions in $L_{+}$ and $L_{-}$. Then, the skein relation 
$$\nabla_{L_{+}} (W)  - \nabla_{L_{-}} (W) = (W- W^{-1}) \nabla_{L_{0}} (W).$$

holds at $v$  if  
\begin{enumerate}[i.]
\item none of the regions that is incident to the crossing is starred or,
\item two  adjacent regions that are incident to the crossing are starred or,
\item both up and down regions that are incident to the crossing are starred. 
\end{enumerate}

\begin{figure}[H]
\centering
\includegraphics[width=.5\textwidth]{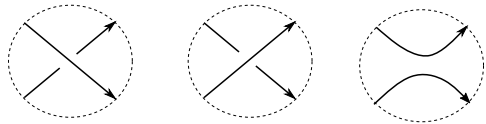}
\caption{ $L_{+}, L_{-}$  and $L_{0}$ from left to right.}
\label{fig:skein}
\end{figure}
\end{theorem}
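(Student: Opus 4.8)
The plan is to prove the skein relation locally, by comparing the states of $L_+$, $L_-$, and $L_0$ near the crossing $v$ via the permanent formulation from Proposition~\ref{prop:perm}. First I would set up the three potential matrices $M_{L_+}$, $M_{L_-}$, $M_{L_0}$. Since $L_+$, $L_-$, $L_0$ agree everywhere except at $v$, and since the four regions incident to $v$ are assumed mutually distinct, the matrices differ only in the (at most four) entries corresponding to the row/column pairs at $v$: in $M_{L_0}$ the crossing $v$ is replaced by an oriented smoothing, which merges two of the four local regions but, more importantly, changes the vertex-region incidences at the relevant row. The key observation is that expanding the permanent along the row of $M_{L_\pm}$ indexed by the crossing $v$ expresses $\nabla_{L_\pm}$ as a sum over which of the (up to four) incident regions ``chooses'' $v$; the Alexander-type weights $W, -W, 1, -1$ (or their $B$-counterparts, with $WB=1$) at the four quadrants of $v$ are exactly what produce the coefficient $W - W^{-1}$ after the $L_0$-terms are recognized as the complementary minors.

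Concretely, the second step is to identify the four ``sub-potentials'' obtained by deleting the row of $v$ and the column of each of its four incident regions, and to show that two of these four minors — the ones where the smoothing-compatible regions choose $v$ — are precisely (copies of) $\nabla_{L_0}(W)$, while the other two cancel between $L_+$ and $L_-$ (or are absorbed). This is the standard Formal Knot Theory computation: at a positive crossing the two ``state-transposition'' quadrants carry weights $W$ and $-W^{-1}$ relative to the smoothed diagram (after imposing $WB=1$), at a negative crossing they carry $W^{-1}$ and $-W$, and subtracting gives $(W-W^{-1})$ times the smoothed contribution. I would organize this as: (a) fix the local picture and Alexander labels as in Figure~\ref{fig:labelgeneral}; (b) write $\nabla_{L_+} = W\,\Delta_1 - W^{-1}\Delta_2 + \Delta_3 - \Delta_4$ where $\Delta_i$ are the permanents of the region-deleted minors, and similarly for $L_-$ with $W \leftrightarrow W^{-1}$ in the first two terms; (c) observe $\Delta_3, \Delta_4$ coincide for $L_+$ and $L_-$ and so cancel in the difference; (d) observe $\Delta_1 = \Delta_2 = \nabla_{L_0}(W)$ because deleting the $v$-row and an incident-region column of $M_{L_\pm}$ yields exactly the potential matrix of $L_0$ with its induced star placement.

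The third step is to check that the three allowed star configurations (i)–(iii) are precisely the ones for which $L_0$ inherits a legitimate star placement making $f=n$ hold and making the minor identification in (d) valid. When no incident region is starred, the smoothing $L_0$ has one fewer crossing and one fewer region, so it is automatically admissible with the inherited (possibly empty) star set; when two \emph{adjacent} incident regions are starred they merge under the smoothing into a single starred region of $L_0$, keeping the count balanced; when the two \emph{opposite} (up/down, or left/right) incident regions are starred, the smoothing identifies one starred region with a non-starred one in a way that still leaves exactly the right number of stars. In each case one has to verify that the row/column deletions defining $M_{L_0}$ are consistent with the row/column deletions inside the $\Delta_i$, which is where the distinctness-of-regions hypothesis on $L_+, L_-$ is used. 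I expect the main obstacle to be the bookkeeping in case (iii) and in the sub-case where $v$ is a separating crossing adjacent to an endpoint: there one must be careful that the ``two labels on one region'' phenomenon from the proof of Proposition~\ref{prop:perm} interacts correctly with the minor expansion, but since that proposition already packages this into the permanent, the argument should go through after a finite case check, which I would relegate to inspection of Figure~\ref{fig:skein} together with the local label conventions.
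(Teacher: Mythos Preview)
Your matrix/permanent approach is exactly the paper's state-partition argument rewritten algebraically, and it would work --- but step~(d) as you state it is wrong, and the error is not cosmetic. It is \emph{not} true that deleting the row of $v$ and one incident-region column from $M_{L_\pm}$ yields $M_{L_0}$, so $\Delta_1=\Delta_2=\nabla_{L_0}$ fails; with that claim your difference would come out as $2(W-W^{-1})\nabla_{L_0}$. What actually happens is that the oriented smoothing merges the two ``$W$-labelled'' regions (call them $L$ and $R$) into a single region $F$, and in $M_{L_0}$ the column of $F$ is the \emph{sum} of the columns of $L$ and $R$ (restricted to rows $\neq v$). Multilinearity of the permanent then gives $\nabla_{L_0}=\Delta_L+\Delta_R$, not $\Delta_L=\Delta_R=\nabla_{L_0}$. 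With this correction your computation reads $\nabla_{L_+}-\nabla_{L_-}=(W-W^{-1})(\Delta_L+\Delta_R)=(W-W^{-1})\nabla_{L_0}$, which is the paper's argument: its bijections $\mathcal{R}_U\leftrightarrow\mathcal{L}$ and $\mathcal{L}_U\leftrightarrow\mathcal{R}$ together with $\mathcal{L}\sqcup\mathcal{R}=\{\text{states of }L_0\}$ are precisely this column-splitting, phrased combinatorially.

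A few smaller points: the up/down labels in this paper are both $1$ (not $1$ and $-1$), so your $-\Delta_4$ should be $+\Delta_4$; this is harmless since those terms cancel in the difference anyway. Your descriptions of what merges in cases~(ii) and~(iii) are off --- in~(ii) the two \emph{adjacent} starred regions do not merge with each other (only opposite quadrants merge), rather one of them merges with an unstarred region so that $F$ is starred, which is why $\nabla_{L_0}$ equals a single minor $\Delta_R$ (or $\Delta_L$) in that case; in~(iii) the two starred regions are the $1$-labelled ones and neither participates in the merge at all. Finally, this theorem is stated for starred \emph{link} diagrams in $S^2$, so there are no endpoints and the ``two labels on one region'' issue from linkoids does not arise here.
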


\begin{proof}

We begin by making the observation that $L_{+}$ and $L_{-}$ admit the same universe. Let $U$ denote the universe of $L_{+}$, $L_{-}$,  and $U_{0}$ denote the universe of $L_{0}$.  Assume first that none of the regions incident to $v$ receives a star. This case is studied in \cite{FKT} for classical link diagrams. We extend the argument for this case of linkoids.

Since the regions incident to $v$ do not receive a star, the face of $U_{0}$ that is obtained by smoothing the crossing $v$ in $L_{+}$ in the oriented way, does not receive a star. Let us call this region $F$.
Let $\delta$ and $\delta_0$ be the sets of all states of $L_{+}$ (also of $L_{-}$) and $L_{0}$, respectively. Consider the partition of $\delta_0$ into two subsets, $\mathcal{L}$ and $\mathcal{R}$ where $\mathcal{L}$ denotes the collection of states where the state marker at the region $F$ lies at a crossing that lies on the left hand side of the former crossing $v$, and $\mathcal{R}$ denotes the collection of states where the state marker at the region $F$ lies at a crossing that is on the right hand side of $v$.
  
Consider now the partition of $\delta$ into the following four subsets: $\mathcal{L}_U$, where the state marker at $v$ is placed at its left,  $\mathcal{R}_U$ where the state marker at $v$ is placed at its right, $\mathcal{U}_U$ where the state marker at $v$ is placed up and $\mathcal{D}_U$ where the state marker is placed down.

There is a one-to-one correspondence  between $\mathcal{R}_U$, where the crossing $v$ receives its marker at its right and $\mathcal{L}$. Similarly, there is a one-to-one correspondence between $\mathcal{L}_U$ and $\mathcal{R}$. These correspondences result in the equalities in Figure \ref{fig:skein2} that induce the equalities in Figure \ref{fig:skeinpf} with the arguments above. The skein relation is obtained by subtracting the second equality in Figure \ref{fig:skeinpf} from the first one.

\begin{figure}[H]
\centering
\includegraphics[width=.5\textwidth]{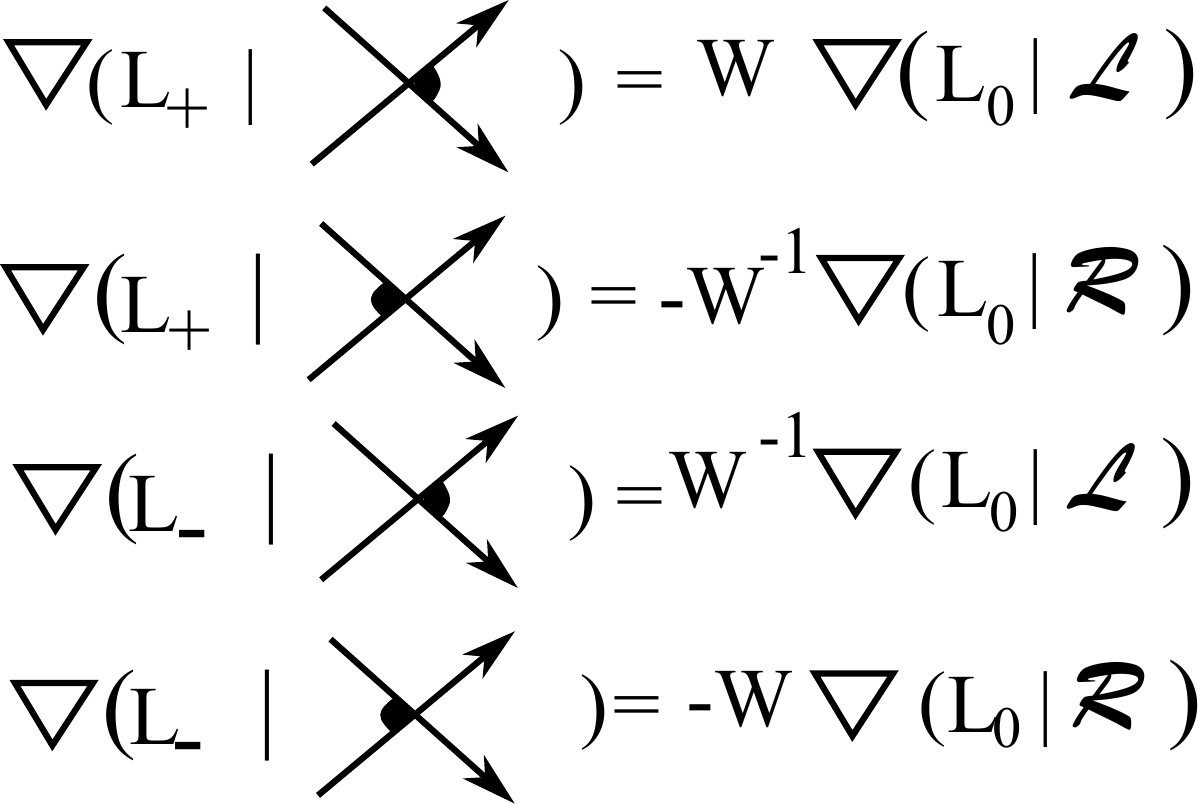}
\caption{The relations induced by different placement of markers at $v$.}
\label{fig:skein2}
\end{figure}

\begin{figure}[H]
\centering
\includegraphics[width=1\textwidth]{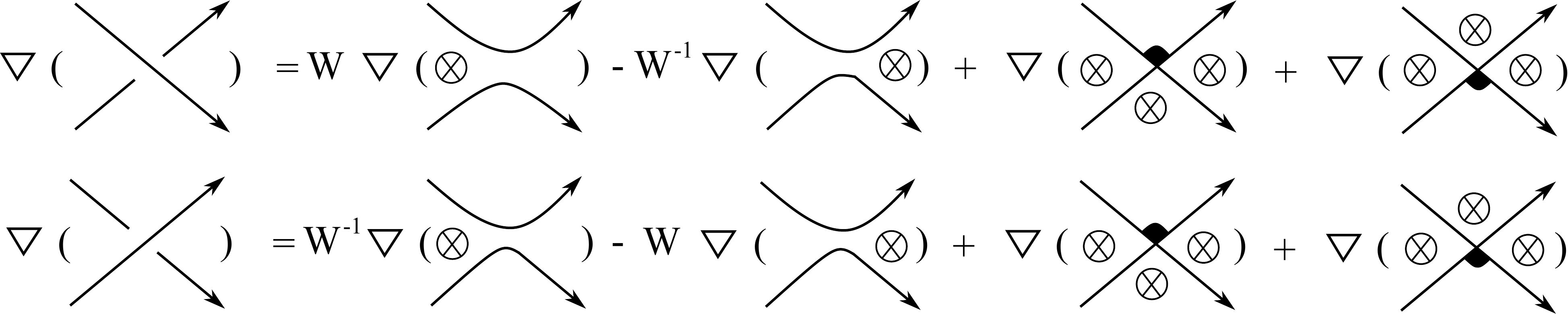}
\caption{State expansions at crossings far away from starred regions.}
\label{fig:skeinpf}
\end{figure}

Assume that \textit{ii.} holds. That is, any two of the regions incident to $v$ that are adjacent to each other, are starred. This implies the region $F$ of $L_{0}$ receives a star.  Without loss of generality, assume that the region that lies on left hand side of $v$ in $L_+$ (and so, in $L_{-}$) is starred. We observe the equalities given in Figure \ref{fig:skeinpf2} at $v$. By subtracting the second equality from the first one, we find the skein relation. (If the region on the right hand side of $v$ is starred then similar equalities hold as in Figure \ref{fig:skeinpf2}, with the difference in the coefficients. Precisely, in the first (top) equality we see $- W^{-1}$ instead of $W$, and $- W$ in the second (bottom) equality instead of $W^{-1}$. From this, we can deduce the skein relation.

\begin{figure}[H]
\centering
\includegraphics[width=.8\textwidth]{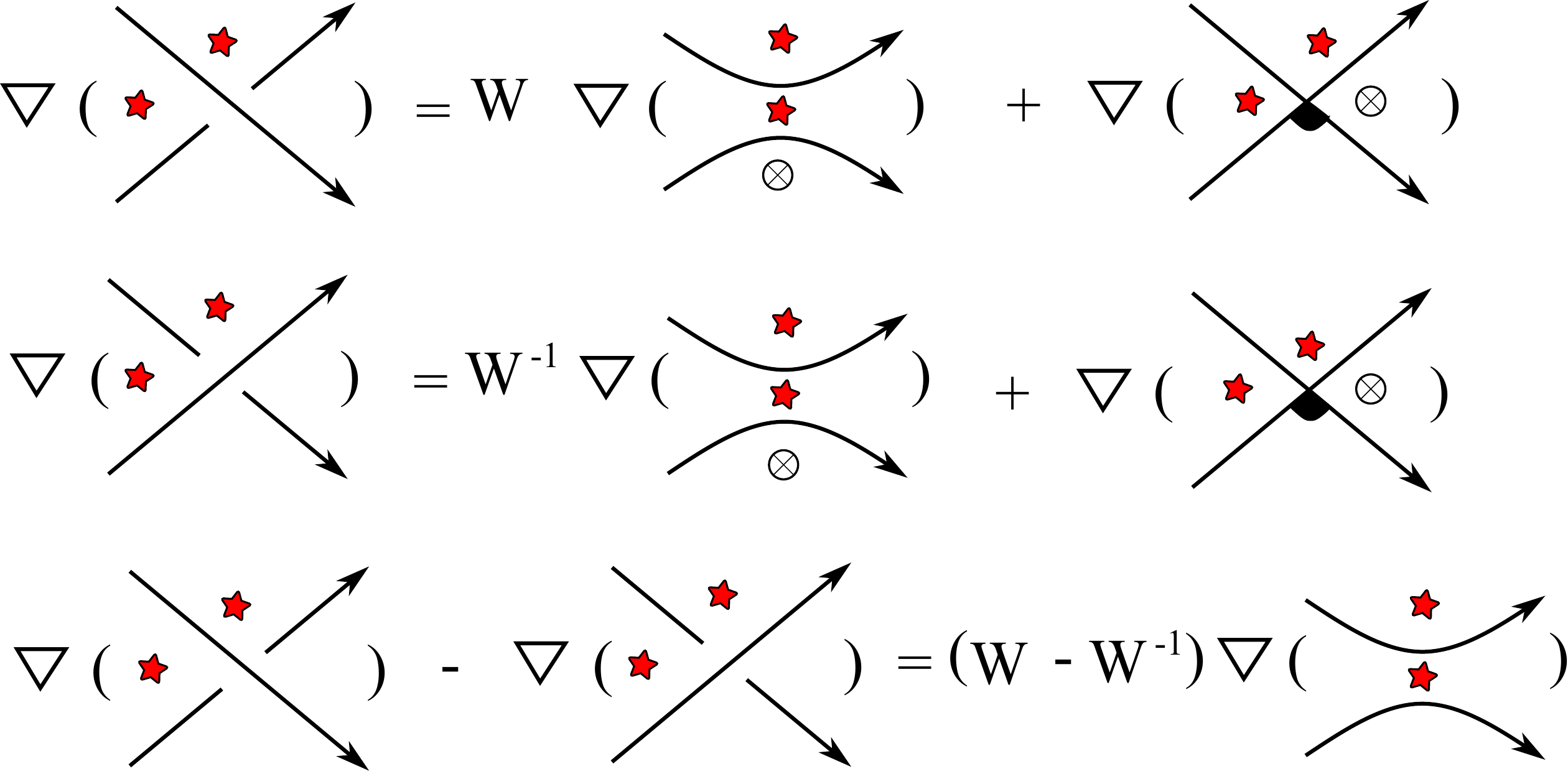}
\caption{State expansions at crossings incident to starred regions.}
\label{fig:skeinpf2}
\end{figure}

Now assume that \textit{iii.} holds for $v$. The observation made for \textit{i.} in Figure \ref{fig:skein2} directly applies to this case. Since there will be no state markers at top and bottom regions in any state of $L_{+}$ and $L_{-}$, in the skein state expansion at $v$ we only observe the first two terms that are given on the right hand side of the equalities in Figure \ref{fig:skeinpf} for both types of crossings. This results in the skein relation.

\end{proof}


\begin{proposition}
Let $L_{+}$ be a starred link diagram in $S^2$ and $v$ be the positive crossing of $L_{+}$ where $L_{+}$ differs both from $L_{-}$ and $L_0$, as shown in Figure \ref{fig:skein}.
Assume that the left hand side and the right hand side regions (not necessarily distinct regions) of  $v$ are starred. Then 
$$\nabla_{L_{+}}(W) = \nabla_{L_{-}}(W).$$
\end{proposition}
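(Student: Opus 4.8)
The plan is to analyze the state sums of $L_{+}$ and $L_{-}$ directly, exploiting the hypothesis that both the left-hand and right-hand regions incident to $v$ carry stars. Since those two regions receive no state markers, in any state of $L_{+}$ (and likewise $L_{-}$) the marker that would otherwise be assigned to the left or right quadrant of $v$ is simply absent; the only quadrants of $v$ that can ever receive a marker are the up and down quadrants, and each of those can receive at most one marker coming from the (distinct or coincident) top and bottom regions. So I would first set up the partition of the state set of $L_{+}$ according to whether the up quadrant, the down quadrant, both, or neither receives a marker at $v$, and observe that this partition is identical, cell for cell, to the corresponding partition for $L_{-}$, because $L_{+}$ and $L_{-}$ share the same universe and the star placement is the same. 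The claim then reduces to a purely local statement: the contribution of the quadrants at $v$ to $\langle L_{+}\,|\,S\rangle$ equals the contribution to $\langle L_{-}\,|\,S'\rangle$ for the matching state $S'$.

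Next I would invoke the local vertex weights of Figure~\ref{fig:labelgeneral}: for a positive crossing the four quadrants carry labels (in cyclic order) of the form involving $1$, $W$, $-W^{-1}$ (etc.), and switching the crossing to make it negative replaces the positive-crossing labels by the negative-crossing labels exactly as recorded in Figure~\ref{fig:mir}, i.e. essentially $W \leftrightarrow W^{-1}$ up to signs. The key point is that the up and down quadrants of $v$ carry the \emph{same} label before and after the switch — the mirror substitution only interchanges the labels at the left and right quadrants, which are precisely the two quadrants that are deleted here because their regions are starred. Hence for every state $S$ of $L_{+}$ with matching state $S'$ of $L_{-}$ we get $\langle L_{+}\,|\,S\rangle = \langle L_{-}\,|\,S'\rangle$, and summing over all states gives $\nabla_{L_{+}}(W) = \nabla_{L_{-}}(W)$.

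The main obstacle — really the only subtlety — is the degenerate case in which the left- and right-hand regions of $v$ are the \emph{same} region (the proposition explicitly allows this), and similarly the possibility that the top and bottom regions coincide. When the left and right regions are one region, that single starred region still simply receives no marker, so the argument that no marker sits at the left or right quadrant of $v$ is unaffected; I would note this explicitly. If the top and bottom regions coincide, that region places exactly one marker, at the up quadrant or the down quadrant, and again the up/down labels are untouched by the crossing switch, so the bijection $S \mapsto S'$ and the weight equality persist. After handling these coincidences, the conclusion follows. I would also remark that, unlike Theorem~\ref{thm:skeinthm}, there is no need to bring in $L_{0}$ at all: the smoothed diagram never enters because the two ``surviving'' quadrants behave identically under the crossing change.
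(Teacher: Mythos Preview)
Your proposal is correct and follows essentially the same approach as the paper's proof: observe that the only markers that can sit at $v$ land in the up or down quadrant (since the left and right regions are starred), note that those two quadrants carry weight $1$ regardless of the crossing sign, and conclude via the obvious bijection between the state sets of $L_{+}$ and $L_{-}$.

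One small remark: your four-way partition (up, down, both, neither) is finer than necessary and betrays a slight slip about what a state is. By definition a state is a bijection between non-starred regions and non-starred crossings, so the crossing $v$ receives \emph{exactly one} marker in every state; the ``both'' and ``neither'' cells of your partition are empty. The paper's proof simply partitions into the two non-vacuous cases (marker at top of $v$, marker at bottom of $v$). This does not affect the validity of your argument, since the weight comparison you make is correct in every cell regardless, but it would streamline the write-up. Your explicit handling of the degenerate coincidences (left region equals right region, or top equals bottom) is a nice addition that the paper leaves implicit.
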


\begin{proof}

The collections of states of $L_{+}$ and $L_{-}$ can both be partitioned into two subsets of states, one which $v$ receives the state marker at the region that lies on top of $v$ and another which $v$ receives the state marker at the region that lies at the  bottom of $v$, and the weight of a state marker at the top or the bottom of $v$ is $1$. Then the mock Alexander polynomials of $L_{+}$ and $L_{-}$ are equal.
\end{proof}

\begin{proposition}\label{prop:split2}
Let $L_{+}$ be a starred link diagram in $S^2$ with stars lying on a pair of adjacent regions.
If the crossing $v$ is nugatory, then
$$\nabla_{L_{+}}(W) = \nabla_{L_{-}}(W).$$


\end{proposition}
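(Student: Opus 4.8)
The plan is to identify both $\nabla_{L_+}(W)$ and $\nabla_{L_-}(W)$ with the classical Alexander--Conway polynomial of the underlying link and then to observe that switching a nugatory crossing leaves that link unchanged. For the first step, recall that $L_-$ is obtained from $L_+$ by switching $v$, and (as noted at the start of the proof of Theorem~\ref{thm:skeinthm}) a crossing switch does not alter the universe. Hence the two regions of $L_+$ carrying the stars are still a pair of \emph{adjacent} regions when viewed as regions of $L_-$, so the theorem of \cite{FKT} recalled above applies to both diagrams: under the specialization $W=B^{-1}$ we get $\nabla_{L_+}(W)=\nabla^{AC}(L_+)$ and $\nabla_{L_-}(W)=\nabla^{AC}(L_-)$, where $\nabla^{AC}(\cdot)$ denotes the Alexander--Conway polynomial (in $z=W-W^{-1}$) of the underlying unstarred link. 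It therefore suffices to prove $\nabla^{AC}(L_+)=\nabla^{AC}(L_-)$.

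For the second step I would use that $v$ is nugatory. By definition one of the two smoothings of $v$ is a split diagram; for a link diagram in $S^{2}$ this yields a simple closed curve $\gamma\subset S^{2}$ meeting the diagram transversally only at $v$ (push the curve separating the two pieces of the split smoothing back through the site of $v$). Thus everything of $L_+$ on one side of $\gamma$ is attached to the other side only through the crossing $v$, and $v$ can be removed by an ambient isotopy of $S^{2}$ that retracts the sub-tangle on the smaller side back across the strand it meets at $v$. This untwisting never uses the over/under information at $v$, so it carries both $L_+$ and $L_-$ to one and the same reduced diagram $L'$; hence $L_+$ and $L_-$ are ambient isotopic links in $S^{2}$ and $\nabla^{AC}(L_+)=\nabla^{AC}(L')=\nabla^{AC}(L_-)$, which with the first step gives $\nabla_{L_+}(W)=\nabla_{L_-}(W)$. (One could instead try to run the Alexander--Conway skein relation at $v$ together with the vanishing of $\nabla^{AC}$ on split links, but the \emph{oriented} smoothing of a nugatory crossing need not be the split one, so the isotopy argument is the clean route.)

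The feature worth underlining --- and the reason a local state count as in the preceding proposition will not do --- is that the untwisting isotopy that kills $v$ will in general drag arcs of the diagram across the two starred regions, so it is \emph{not} a star equivalence and there is no reason for $L_+$ and $L_-$ to be star equivalent. What rescues the argument is exactly that for stars on an adjacent pair the potential is a genuine link invariant, which is insensitive to such forbidden moves. Consequently the only delicate point in a full write-up is purely classical, namely that a nugatory crossing can be removed by an ambient isotopy of $S^{2}$ with a sign-independent result; this is standard (it is what makes ``reduced diagram'' a meaningful notion), so the substantive content of the proposition is the identification with $\nabla^{AC}$ in the first step.
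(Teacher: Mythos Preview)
Your proof is correct but follows a genuinely different route from the paper's. The paper gives a direct combinatorial argument at the level of states: it shows that when the two stars are adjacent and $v$ is nugatory, no state of $L_{+}$ (or $L_{-}$) can place its marker at $v$ in the left or right quadrant. The reason is a counting obstruction: such a placement would force both stars to lie in interior regions of a single sublink $K_1$ (or $K_2$), and then that sublink would have three occupied regions (two stars plus the marked exterior) against only $n$ crossings and $n+2$ regions, leaving too few free regions for a valid state. Hence every state marker at $v$ sits in the up or down quadrant, contributing weight $1$, and switching $v$ changes nothing.

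Your argument instead leverages the FKT identification of the adjacent-star potential with the genuine Alexander--Conway polynomial, and then uses the topological fact that a nugatory crossing can be removed by isotopy independently of its sign, so $L_{+}$ and $L_{-}$ represent the same link. This is perfectly valid and conceptually clean; it buys you the result without any local bookkeeping, at the cost of importing the full strength of the FKT theorem (independence from the choice of adjacent starred pair and invariance under all Reidemeister moves). The paper's argument is more self-contained and also yields the finer information that only up/down markers occur at $v$, which feeds into the surrounding discussion of state structure. One small correction to your final paragraph: a state-count argument \emph{does} succeed here---just not a purely local one at $v$. The paper's count is global in that it uses the sublink decomposition induced by the nugatory crossing, but it is still a direct analysis of states rather than an appeal to link invariance.
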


\begin{proof}
This case is discussed in \cite{FKT}.  We illustrate $L_{+}$ and $v$ in Figure \ref{fig:nugatory} where  $K_1$ and $K_2$ are the sublinks that can be obtained by separating $L_+$ at $v$.

\begin{figure}[H]
\centering
\includegraphics[width=.1\textwidth]{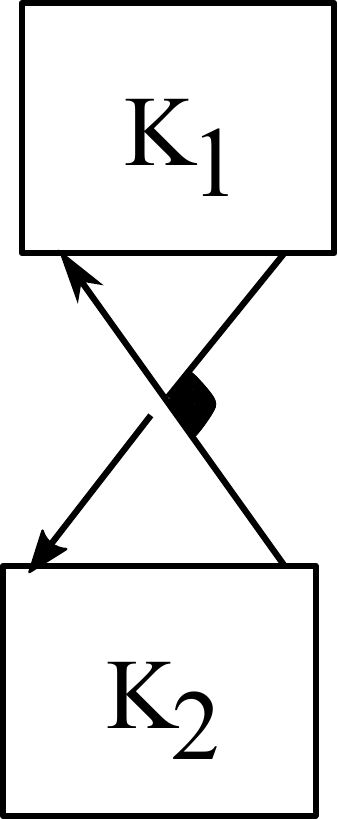}
\caption{An impossible state marker placement at a nugatory crossing}
\label{fig:nugatory}
\end{figure}

The main observation that is utilized for the proof is the following.
$L_{+}$ is assumed to be endowed with stars on a pair of adjacent regions. There is no state of $L_{+}$ that admits a state marker at $v$ lying on the left or the right  of $v$, so on the exterior region of $L_{+}$, as shown in the figure.  The reason for this is that the stars lie on adjacent regions and a state where $v$ receives a state market at its left (or at its right) forces the pair of stars to lie on a pair of interior regions of one of the sublinks that form $L_{+}$. This results in the corresponding sublink diagram, there are $3$ occupied regions (two of them with stars and one of them is the marked exterior region) and if it  has $n$ crossings, then it has $n-1$  available regions to be starred, which is contradictory to the definition of  state. Therefore, one of the stars is required to be placed on the left or right of the crossing $v$. 

From this observation, it follows that either the top or the bottom region that is incident to $v$ receives a state marker whose weight is $1$. Since this observation is also  true for $L_{-}$, we get the equality $\nabla_{L_{+}}(W) = \nabla_{L_{-}}(W).$

\end{proof}

\begin{remark} \normalfont
By Proposition \ref{prop:split2}, the Alexander polynomial of a split link is assumed to be zero. See \cite{FKT} for the discussion.
\end{remark}

\begin{proposition}\label{prop:separating1}
Let $L_{+} $ be a starred link diagram in $S^2$. Let $v$ be the crossing where $L_{+}, L_{-}, L_{0}$ differ from each other. Assume that the crossing $v$ is the unique nugatory crossing of $L_{+}$ and the stars lie on a pair of non-adjacent regions in $L_+$.

\begin{enumerate}[i.]
\item If the stars lie on two interior regions that belong to only one of the sublinks of $L_{+}$, that is, the stars are on either  $K_1$ or $K_2$ of Figure \ref{fig:nugatory}, or if one of the stars lie on the exterior region, then $\nabla_{L_{+}} (W) = \nabla_{L_{-}}(W)$.
\item If the stars lie on two interior regions, one star on $K_{1}$ and the other star on $K_{2}$, then 
$\nabla_{L_{+}} (W) - \nabla_{L_{-}} (W) = (W - W^{-1}) \nabla_{L_{0}}(W)$.

\end{enumerate}
\end{proposition}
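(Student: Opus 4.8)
The plan is to run the state-expansion argument from the proof of Theorem \ref{thm:skeinthm} together with the region-counting idea behind Proposition \ref{prop:split2}. Since $v$ is nugatory, I first fix a simple closed curve $\gamma$ on $S^{2}$ meeting $L_{+}$ only at the point $v$; it separates $L_{+}$ into the two sublinks $K_{1},K_{2}$ of Figure \ref{fig:nugatory}, and I write $n_{i}$ for the number of crossings of $L_{+}$ on the $K_{i}$-side, so $L_{+}$ has $n=n_{1}+n_{2}+1$ crossings. Near $v$ the curve $\gamma$ runs off into two opposite quadrants and, meeting $L_{+}$ nowhere else, stays inside a single region; hence those two opposite quadrants at $v$ belong to one and the same region, which is the exterior region $E$. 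The remaining two opposite quadrants at $v$ lie in two distinct, non-adjacent regions $A_{1}$ (on the $K_{1}$-side) and $A_{2}$ (on the $K_{2}$-side). Closing up the $K_{i}$-side across $\gamma$ produces a link diagram with $n_{i}$ crossings, hence $n_{i}+2$ regions by Proposition \ref{prop:euler}, of which exactly $n_{i}$ are interior to that side, the remaining two being $E$ and $A_{i}$; this count drives everything below. Finally, the oriented smoothing $L_{0}$ merges $A_{1}$ and $A_{2}$ into a single region $F$, and by Proposition \ref{prop:euler}/\ref{prop:eulerii} (comparing $f-n$ before and after) this is consistent only with $L_{0}$ being connected.

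The combinatorial heart, in the style of the proof of Proposition \ref{prop:split2}, is the claim: in any state of $L_{+}$ the marker at $v$ lies in whichever of $E,A_{1},A_{2}$ is unstarred, and it can lie in $E$ \emph{only if} $E$ is unstarred and exactly one of the two stars is an interior region of each sublink. Indeed, if the $v$-marker were placed in $E$, then $E$ is spoken for, so the $n_{i}$ crossings on the $K_{i}$-side must be matched bijectively with the $K_{i}$-side regions that are neither $E$ nor starred; by the count above there are $n_{i}$ of these when the $K_{i}$-side carries exactly one interior star, but only $n_{i}-1$ of them when it carries two, and the match is likewise impossible when $E$ itself is starred.

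Part (i) is then immediate. In each of its cases --- both stars interior to one sublink, or one star on $E$ --- no state of $L_{+}$ (equivalently of $L_{-}$, the two sharing a universe) places the $v$-marker in $E$. On the other hand, following the proof of Theorem \ref{thm:skeinthm}, the only contributions to $\nabla_{L_{+}}-\nabla_{L_{-}}$ come from states with the $v$-marker in $E$: these are precisely the two quadrants at $v$ whose local labels differ between a positive and a negative crossing, whereas states with the marker in $A_{1}$ or $A_{2}$ contribute identically to both potentials (the $A_{i}$-quadrant labels are unaffected by the over/under switch). Hence $\nabla_{L_{+}}(W)=\nabla_{L_{-}}(W)$. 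For part (ii), $E$ is unstarred and one star is interior to each side, so states with the $v$-marker in $E$ exist; running the proof of Theorem \ref{thm:skeinthm} verbatim, these states --- sorted by which of the two $E$-quadrants of $v$ is used --- are in weight-preserving bijection with the states of $L_{0}$, sorted by the side of $\gamma$ on which the marker of $F$ lies, and the two differing labels supply the overall factor $W-W^{-1}$; summing over all states yields $\nabla_{L_{+}}(W)-\nabla_{L_{-}}(W)=(W-W^{-1})\nabla_{L_{0}}(W)$. If one of the two stars happens to be $A_{1}$ or $A_{2}$, the merged region $F$ of $L_{0}$ inherits a star and $L_{0}$ is left with too few stars; then $\nabla_{L_{0}}=0$ while $L_{+}$ has no state whose $v$-marker is in $E$, so both sides vanish, consistently with the convention (cf.\ the Remark following Proposition \ref{prop:split2}) that a split or improperly starred diagram has zero potential.

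The step I expect to be the real obstacle is the bookkeeping in part (ii): establishing the weight-preserving bijection between the $E$-marker states of $L_{+}$ and the states of $L_{0}$ in this degenerate configuration, where two quadrants at $v$ have collapsed onto $E$ and the regions $A_{1},A_{2}$ have fused into $F$. One must pin down exactly which quadrant labels change under the crossing switch at $v$, verify that the surviving contributions assemble into $\pm(W-W^{-1})$ with consistent signs, and check that the marker-reassignment carrying an $L_{0}$-state back to an $L_{+}$-state is an honest bijection here and not merely a matching of cardinalities. The counting argument underlying part (i) and the ``only if'' half of the lemma is, by contrast, routine once the side-by-side region counts from Proposition \ref{prop:euler} are in place.
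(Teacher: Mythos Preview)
Your argument for part (i) is correct and matches the paper's reasoning: once you know that no state can place the $v$-marker in $E$, the remaining $A_{1},A_{2}$ quadrants carry weight $1$ independently of the crossing sign, and the equality follows. The region-counting lemma you extract is a clean way to phrase what the paper does more informally.

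Part (ii), however, rests on a geometric error. At a nugatory crossing the curve $\gamma$ is forced to run through the two opposite quadrants bounded by one incoming and one outgoing arc-end on each side; but those are exactly the quadrants that the \emph{oriented} smoothing merges. In other words, the smoothing merges the two $E$-quadrants, not $A_{1}$ and $A_{2}$ as you assert. Since those two quadrants already belong to the single region $E$, no regions are identified, and $L_{0}$ retains all $n+2$ regions while losing the crossing $v$: by Proposition~\ref{prop:eulerii} it is split, not connected. Your Euler computation ``confirming'' connectedness is built on the false premise that $A_{1}$ and $A_{2}$ fuse. Consequently $L_{0}$ is not admissible and has no states at all, so there is nothing on the $L_{0}$ side for your bijection to land on, and the appeal to Theorem~\ref{thm:skeinthm} ``verbatim'' cannot be carried out.

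What actually happens, and what the paper does, is this. Because the two $E$-quadrants belong to one region, the sets of $L_{+}$-states with $v$-marker in the left $E$-quadrant and in the right $E$-quadrant are in bijection with one another (same region marking the same crossing, different local quadrant), so both have total remaining weight $P_{1}$; this produces $\nabla_{L_{+}}-\nabla_{L_{-}}=2(W-W^{-1})P_{1}$, with a factor of $2$ that your bijection would never see. On the other side, $\nabla_{L_{0}}$ must be interpreted via an RII-connected diagram $\tilde{L}_{0}$; analysing the six local marker placements at the RII site, four cancel in pairs and the remaining two give $\nabla_{L_{0}}=2P_{1}$. The two factors of $2$ then cancel to yield the skein relation. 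The obstacle you flagged in your final paragraph is therefore not bookkeeping in the bijection but the fact that the bijection does not exist in this configuration; the substance of the proof lies in handling the split $L_{0}$ and matching the two doublings.
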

\begin{proof}
\begin{enumerate}[i.]
\item Assume first that the stars are placed on two nonadjacent interior regions of $K_1$. In this case, in any state of $L_+$,  the exterior region of $K_1$ needs to get a state marker in one of the neighboring crossings of $K_1$. This forces the placement of the state marker at the nugatory crossing $v$ is either up or down the crossing $v$. Thus, a state marker at the nugatory crossing contributes to $\nabla_{L_{+}}$ trivially. This is also true for $L_{-}$. Since the states of $L_{+}$ and $L_{-}$ coincide, the equality follows. The equality is also true for the latter case: If the exterior region is occupied with a star, then the state marker placement at $v$ is either up or down the crossing. The rest follows similarly. 

\item In this case, a state marker at the nugatory crossing $v$ can be placed on any side of $v$ (up, down, left or right). If the state marker lies at the right of $v$ in a state of $L_{+}$ then its weight is $W$. Let $P_1$ be the sum of the product of weights of state markers in states where the state marker at $v$ is at the right of $v$ then the total contribution of all such states to $\nabla_{L_{+}}$ is $WP_{1}$. It is clear that the states where the state marker at $v$ is at the right of $v$ coincide with the states where the state marker at $v$ is at the left of $v$, except for the state marker at $v$. Then, the total contribution of all states where the state marker at $v$ is at the left of $v$, to $\nabla_{L_{+}}$ is $-W^{-1}P_{1}$. Let $P_2$ and $P_3$ denote the sums of the product of weights of state markers in states where the state marker at $v$ is at the top and bottom of $v$, respectively. The weight of a state marker at the top or bottom of $v$ is $1$. We find that $\nabla_{L_{+}} = (W -W^{-1}) P_{1}  + P_{2} + P_{3} $.

States of $L_{+}$ clearly coincide with states of $L_{-}$. Similar arguments hold for $L_{-}$ and we find that $\nabla_{L_{-}} = (W^{-1} - W) P_{1} +P_{2} + P_{3}$. Then $\nabla_{L_{+}} (W) - \nabla_{L_{-}}  (W) = 2(W - W^{-1}) P_{1}$.

On the other hand, $L_{0}$ is a split link diagram. Let $\tilde{L_{0}}$ denote the connected link diagram that is obtained from $L_{0}$ by an RII move. As shown in Figure \ref{fig:RIIconnecting}, there are in total six possible placements of state markers at the crossings in the RII move site when none of the regions adjacent to the strands in the move site is endowed with a star. The states I, III, IV, VI of  $\tilde{L_{0}}$  depicted in Figure \ref{fig:RIIconnecting} coincide with the states of $L_{+}$ (also with $L_{-}$) where the state marker at $v$ is at the top or the bottom of $v$. Then, the total contributions of these states is $ W P_{2} - W P_{2} + W P_{3} - W P_{3} = 0$. The states II and V contribute as $2 P_{1}$, where $P_1$ is the sum of the product of weights of state markers in states of $L_{+}$ where the state marker at $v$ is at the right  (or left) of $v$. Then,  
$\nabla_{L_0} (W) = \tilde{L_{0}} (W) = 2 P_{1}$. Therefore,  $\nabla_{L_{+}} (W) - \nabla_{L_{-}} (W) = (W -W^{-1}) \nabla_{L_0} (W)$. 
\end{enumerate}

\end{proof}

\begin{figure}[H]
\centering
\includegraphics[width=.5\textwidth]{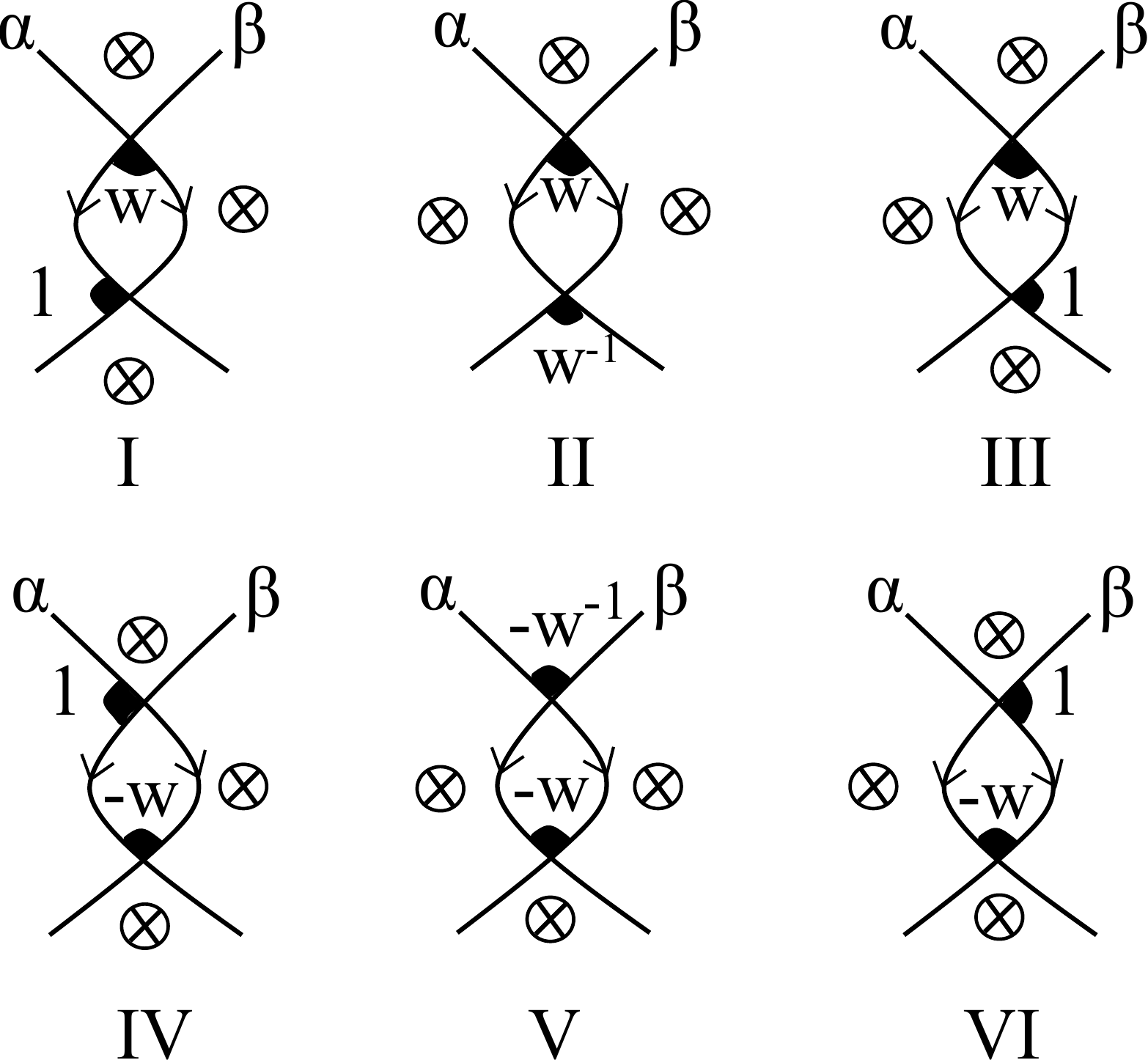}
\caption{The local states in an RII move site connecting disjoint knots $\alpha$, $\beta$.}
\label{fig:RIIconnecting}
\end{figure}



\begin{theorem}
Let $L_{+}, L_{-}$   and $L_{0}$ denote three starred linkoid diagrams in $S^2$ which differ from each other at a unique crossing $v$, as shown in Figure \ref{fig:skein}. Then the skein relation
$$\nabla_{L_{+}} (W) - \nabla_{L_{-}} (W) = (W - W^{-1}) \nabla_{L_{0}}(W)$$
holds  if $v$ is not a separating crossing.

\end{theorem}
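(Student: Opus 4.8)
The plan is to reduce the theorem to the already-proven cases of Theorem~\ref{thm:skeinthm} by a careful case analysis on the location of the stars of $L_{+}, L_{-}, L_{0}$ relative to the crossing $v$, together with the observation that a non-separating crossing in $S^2$ behaves locally just like the generic crossing treated in case \textit{i.} of Theorem~\ref{thm:skeinthm}. First I would recall the structure of the proof of Theorem~\ref{thm:skeinthm}: one partitions the states of $L_{\pm}$ into the four classes $\mathcal{L}_U, \mathcal{R}_U, \mathcal{U}_U, \mathcal{D}_U$ according to which of the four local quadrants at $v$ receives the marker of the adjacent region, partitions the states of $L_{0}$ according to whether the marker of the smoothed region $F$ sits to the left or right of the former crossing site, and then matches $\mathcal{L}_U \leftrightarrow \mathcal{R}$, $\mathcal{R}_U \leftrightarrow \mathcal{L}$; subtracting the two resulting state-expansion identities (Figure~\ref{fig:skeinpf}) yields the skein relation. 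The only place where that argument used something about $v$ was the implicit requirement, via Theorem~\ref{thm:skeinthm}, that the four local regions be mutually distinct and that the stars be in one of the three listed configurations.

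The key step is therefore to show that for a linkoid diagram with a \emph{non-separating} crossing $v$, we are automatically in (a linkoid analogue of) one of the admissible configurations. I would argue as follows. If $v$ is non-separating then neither oriented nor disoriented smoothing of $v$ disconnects the universe, and in particular the four local regions $N, S, E, W$ around $v$ cannot coincide in the "pinching" ways that create separations: one checks that $N = S$ would force the vertical smoothing to split off a component, and $E = W$ would force the horizontal smoothing to do so, so at most the harmless identifications $N=E$, $E=S$, etc., of \emph{adjacent} regions can occur. Next I would verify that the placement of the (at most two, for the linkoid situation — or the relevant number fixed by the admissibility count $f=n$) stars of $L_{\pm}$ cannot be at two \emph{opposite} local regions unless those opposite regions are actually equal, which is exactly the excluded case; so every star configuration around a non-separating $v$ falls under case \textit{i.}, \textit{ii.}, or \textit{iii.} of Theorem~\ref{thm:skeinthm}, possibly after merging coincident adjacent regions. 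The merging cases (two adjacent local regions equal, with one of them starred) are precisely what case \textit{ii.} already covers, and the argument there goes through verbatim because it only used adjacency of the two starred regions, not whether the diagram is a link or a linkoid.

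Finally I would assemble these pieces: given $L_{+}, L_{-}, L_{0}$ differing at a non-separating $v$, classify the star positions around $v$; invoke the matching case of Theorem~\ref{thm:skeinthm} (extending its case \textit{i.} proof, which was written for links, to linkoids exactly as the text already does when it says "We extend the argument for this case of linkoids") to get
$$\nabla_{L_{+}}(W) - \nabla_{L_{-}}(W) = (W - W^{-1})\,\nabla_{L_{0}}(W),$$
where invariance of $\nabla_{\bullet}$ under star equivalence (Corollary following Theorem~\ref{thm:invariance}) guarantees the identity does not depend on which diagram in each star-equivalence class we chose. The main obstacle I anticipate is the bookkeeping in the first part of the key step: proving rigorously that non-separating-ness of $v$ rules out exactly the "opposite local regions equal" degeneracy and nothing else, since one must handle the linkoid subtlety that a local region adjacent to $v$ might also be incident to an endpoint (so it can carry two labels), and one must make sure the one-to-one state correspondences $\mathcal{L}_U \leftrightarrow \mathcal{R}$, $\mathcal{R}_U \leftrightarrow \mathcal{L}$ still hold when such an endpoint-adjacent region is involved — but this is the same endpoint analysis already carried out in the proof of Theorem~\ref{thm:invariance} and in the linkoid extension of Theorem~\ref{thm:skeinthm} case \textit{i.}, so it should go through without essentially new ideas.
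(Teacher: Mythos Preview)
Your approach is essentially the same as the paper's: both reduce to the state-partition argument of Theorem~\ref{thm:skeinthm} after checking that a non-separating crossing $v$ forces the four local regions either to be mutually distinct or to have only an \emph{adjacent} pair coincide (precisely when $v$ sits next to an endpoint), and then observe that the identities of Figures~\ref{fig:skein2}--\ref{fig:skeinpf} go through. The paper's proof is just a terse version of what you wrote, with the details ``left to the reader.''

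One point where you are slightly looser than the paper: you speak of ``at most two'' stars near $v$ and try to funnel every configuration into cases \textit{i.}, \textit{ii.}, \textit{iii.} of Theorem~\ref{thm:skeinthm}. The paper instead makes the sharper observation that, because $L_{+}$ is a starred \emph{linkoid} in $S^2$, at most \emph{one} of the regions incident to $v$ can carry a star. This one-star situation is not literally any of cases \textit{i.}--\textit{iii.}, so your sentence ``every star configuration around a non-separating $v$ falls under case \textit{i.}, \textit{ii.}, or \textit{iii.}'' is not quite right. What is true (and what the paper is tacitly invoking) is that the same partitions $\mathcal{L}_U,\mathcal{R}_U,\mathcal{U}_U,\mathcal{D}_U$ and the same correspondences $\mathcal{R}_U\leftrightarrow\mathcal{L}$, $\mathcal{L}_U\leftrightarrow\mathcal{R}$ still yield the expansion identities of Figure~\ref{fig:skeinpf} when a single local region is starred; one of the four classes is simply empty and the bookkeeping is easier, not harder. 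If you replace your reduction-to-three-cases claim with this direct remark, your proposal matches the paper's argument exactly.
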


\begin{proof}

Let $v$ be a crossing in $L_+$ that is not starred and and not separating. Then either four of the regions that are adjacent to $v$ are mutually distinct or two of them are the same regions if $v$ is incident to an endpoint. But either of these cases, similar equalities that appear in the proof of Theorem \ref{thm:skeinthm} hold at $v$. Note that only one of the regions that are adjacent to $v$ may be starred since $L_+$ is a linkoid diagram. The details of the proof are left to the reader.

\end{proof}

\begin{remark}\normalfont
A linkoid diagram may have a separating crossing but still satisfies the skein relation at such crossing.
See the linkoid diagram given in Figure \ref{fig:skeinhold} with one and only separating crossing. Notice that $L_0$ is a split linkoid diagram with two components and is not an admissible diagram. Transforming $L_0$ to an admissible diagram $\tilde{L}_{0}$ by an RII move is possible as in Figure \ref{fig:skeinhold}. Then, direct computation shows that 
$\nabla_{\tilde{L}_{0}}(W) = 2$. Therefore, $\nabla_{L_{0}} (W)= 2$. One can verify easily that $$\nabla_{L_{+}}(W) - \nabla_{L_{-}} (W)= 2(W - W^{-1}) = (W - W^{-1}) \nabla_{L_{0}}(W).$$
\begin{figure}[H]
\centering
\includegraphics[width=.5\textwidth]{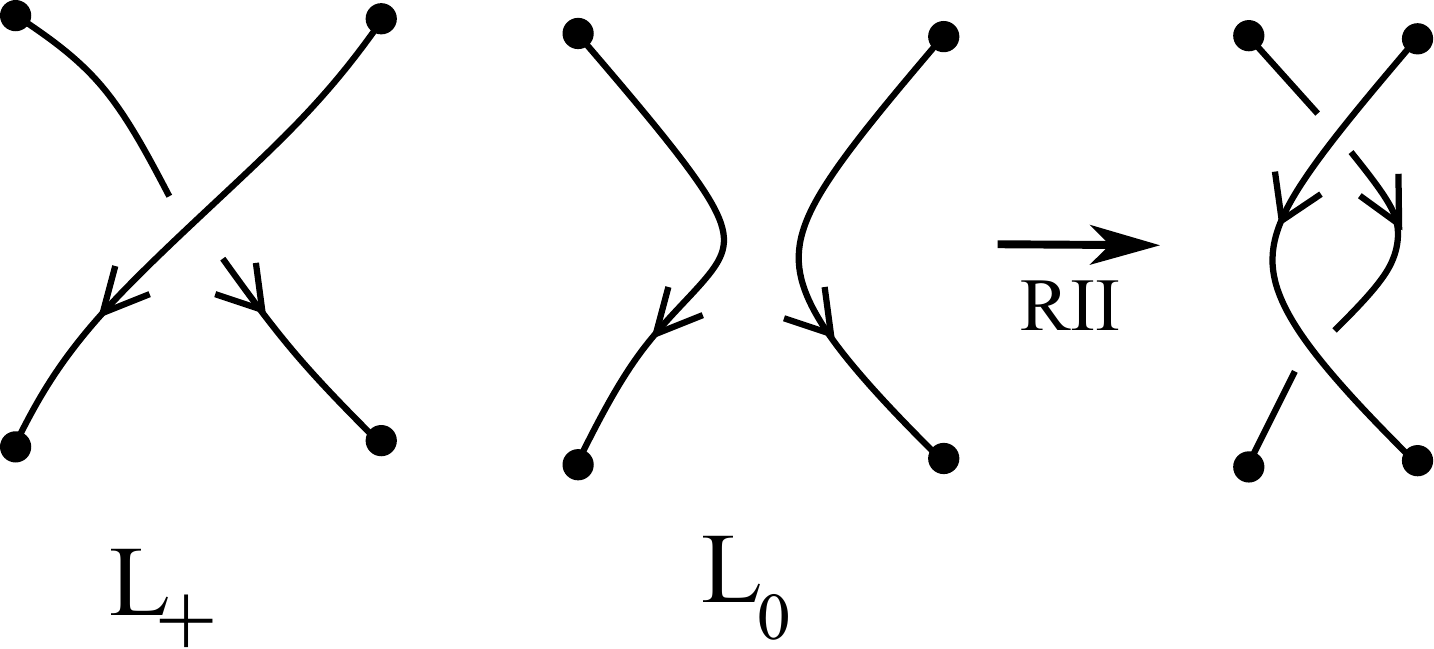}
\caption{A linkoid diagram with a separable crossing and its resolution.}
\label{fig:skeinhold}
\end{figure}

\end{remark}

\section{More on starred link diagrams}\label{sec:more}
\subsection{Admissible link diagrams in higher genus surfaces}
Let $L$ be a a connected link diagram that is tightly embedded in some surface of genus $g$. We know that the equality $f= 2 - 2g +n$ holds for $L$, where $f$ denotes the number of regions and $n$ denotes the number of crossings of $L$. In Section \ref{sec:decorated}, we discuss decorating regions or crossings with stars to obtain the equality $f = n$ for the regions and crossings without stars. See Figure \ref{fig:tori} for two examples, where we illustrate knot diagrams in a torus and double torus.
 \begin{figure}[H]
\centering
\includegraphics[width=1\textwidth]{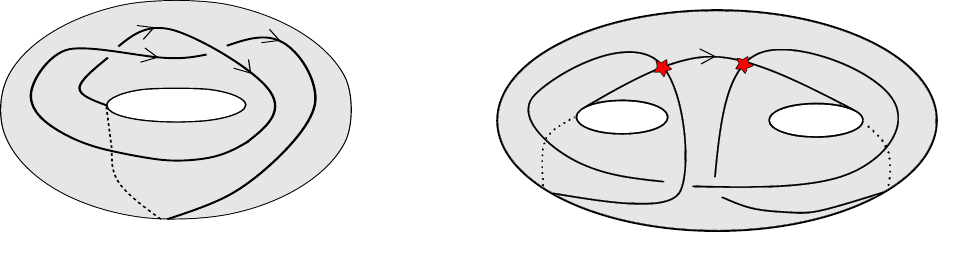}
\caption{Admissible knots in a torus and a double torus}
\label{fig:tori}
\end{figure}

The number of regions of $L$ can be also reduced by two by adding handles to $S^2$ that connect either a triple of regions or two distinct pairs of regions of $L$. The resulting link diagrams are admissible link diagrams in genus surface as it is described below.  We first discuss adding handles to $S^2$ that connect a triple of regions of $L$.

Let $\tau$ denote $S^2 - \bigcup^{3}_{i=1} D_i$, where $D_i$ is an open disk in $S^2$.  We call $\tau$ a \textit{trident surface}. Choose any three distinct regions in $L$ and delete an open disk from each of these regions.  A trident surface is added to $S^2$ by gluing its boundary components to the boundaries of the deleted disks, as illustrated in Figure \ref{fig:trident2}. As a result, $L$ is regarded as a link diagram in a genus two surface. It is not hard to observe that the chosen triple of regions of $L$  are connected by the trident surface and so represents  the same region of $L$ (named as $T$ in the figure) that lies in the genus two surface.  This implies the total number of regions in $L$ that lies in the resulting genus two surface is equal the number of its crossings. That is, $L$ becomes admissible in the genus two surface.

\begin{figure}[H]
\centering
\includegraphics[width=.75\textwidth]{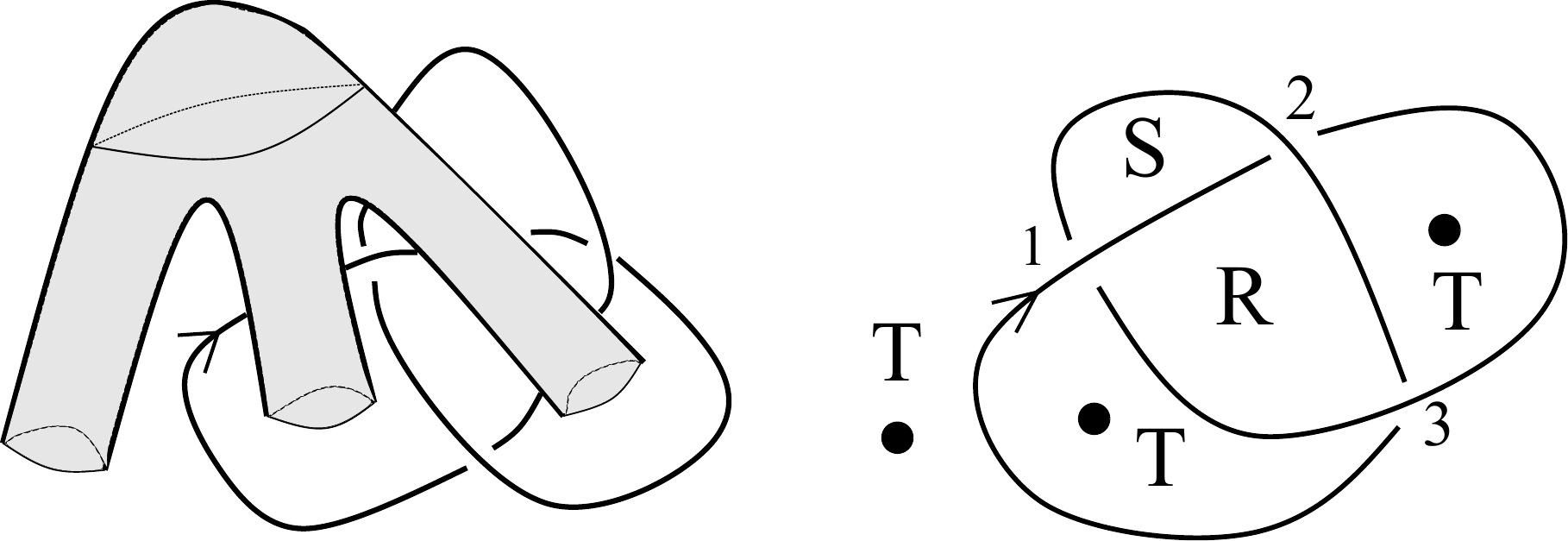}
\caption{Attaching handles onto a triple of regions of the trefoil diagram}
\label{fig:trident2}
\end{figure}

One another way to obtain an admissible link diagram in a genus two surface is to choose two distinct pairs of regions of $L$ in $S^2$ (so in total four distinct regions), delete out an open disk from each of the regions and connect these regions pair-wisely by attaching two disjoint handles through the boundaries of the disks.  The resulting link diagram is clearly an admissible link diagram in a genus two surface. 

\begin{figure}[H]
\centering
\includegraphics[width=.4\textwidth]{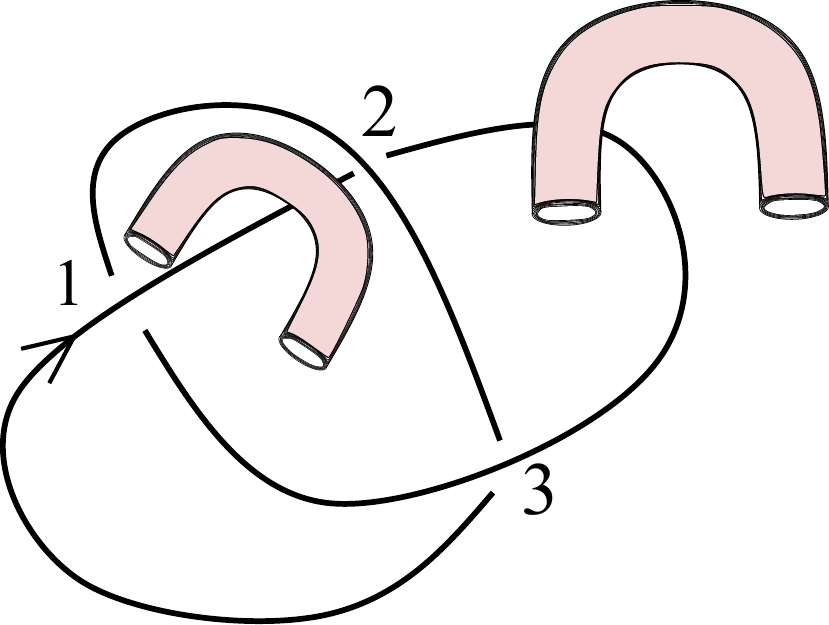}
\caption{Attaching disjoint handles on pairs of distinct regions of the trefoil diagram}
\label{fig:handle}
\end{figure}

These constructions of admissible link diagrams in genus two surfaces bring us to the question of classification of admissible link diagrams in higher genus surfaces. We answer to the question by the following theorem but we shall first give a necessary definition.

\begin{definition}\normalfont
Let $L$ be a connected link diagram that is admissible  in a surface of genus $g$.
Assume that all extraneous handles have been removed from the surface. This means that all faces with one boundary component are disks, and that any face with multiple boundary components consists in handles attached to disks in such a way
 that removal of any handle will disconnect the region.We call such a surface a \textit{minimal surface} for the link diagram $L$.

\end{definition}

\begin{theorem}
If $L$ is an admissible link diagram that lies in a surface $\Sigma_g$ of genus $g>0$ and $\Sigma_g$ is minimal for $L$, then $g = 1$ or $g= 2$. If $g= 2$, then the embedding can be obtained from a link diagram $L$ in $S^2$ by adding some handles in either the form of three disk regions made into one region (attaching a trident surface) or two distinct pairs of disk regions made into two regions by connecting pairs by two disjoint handles.

\end{theorem}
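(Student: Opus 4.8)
The plan is to reduce everything to Proposition~\ref{prop:euler} after surgering away the non-disk faces. Throughout write $L$ for the admissible diagram in the minimal surface $\Sigma_g$, with $n$ crossings, so its universe $U$ has $n$ vertices, $2n$ edges, is connected, and bounds $f=n$ faces $F_1,\dots,F_n$. The first step is to turn the informal hypothesis that $\Sigma_g$ is minimal into concrete topology. If a face $F_i$ has a single boundary component it is a disk by definition. If $F_i$ has $b_i\ge 2$ boundary components, minimality presents it as a union of disks glued along tubes (handles) whose removal disconnects $F_i$; hence the incidence graph with the disks as nodes and the tubes as edges is connected with every edge a bridge, i.e.\ it is a tree. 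One records the elementary fact that a tree of $b_i$ disks joined by $b_i-1$ tubes is a genus-$0$ surface with exactly $b_i$ boundary circles (Euler characteristic $b_i-2(b_i-1)=2-b_i$ and $b_i$ boundary circles forces genus $0$). In particular every face of $L$ is planar, and $F_i$ carries exactly $b_i-1$ tubes; configurations like a one-holed torus face, or two disks joined by two tubes, are exactly what this excludes.

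Now surger. Set $H:=\sum_{i=1}^{n}(b_i-1)$, the total number of tubes in the faces of $L$. Cut $\Sigma_g$ along the belt circle of each tube and cap the two resulting circles with disks. Since each tube is a bridge of its tree, each surgery splits one face into two and leaves $U$ untouched, so the new diagram $\widehat L$ has $\widehat f=n+H$ faces, all disks, and lies in a closed orientable surface $\widehat\Sigma$; because $U$ is connected and every face of $\widehat L$ is a disk glued along its boundary to $U$, the surface $\widehat\Sigma$ is connected, of some genus $\widehat g\ge 0$, and $\Sigma_g$ is recovered from $\widehat\Sigma$ by reattaching the $H$ tubes, each joining the interiors of two distinct disk faces. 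Since $\widehat L$ is a tight connected link diagram, Proposition~\ref{prop:euler}(1) applied to it gives $\widehat f-n=2-2\widehat g$, that is $H=2-2\widehat g\le 2$. On the other hand attaching a tube to a connected surface removes two open disks and glues in a cylinder, lowering the Euler characteristic by $2$, i.e.\ raising the genus by $1$; so reattaching the $H$ tubes gives $g=\widehat g+H$. Combining, $g=\widehat g+(2-2\widehat g)=2-\widehat g$, and since $\widehat g\ge 0$ while $g>0$ we get $g\in\{1,2\}$, proving the first assertion.

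If $g=2$ then $\widehat g=0$, so $\widehat\Sigma=S^2$ and $H=2$: thus $\Sigma_2$ is $S^2$ with exactly two tubes attached at interiors of disk faces of the $S^2$-diagram $\widehat L$. Distributing the two tubes among the faces of $\Sigma_2$ leaves exactly two possibilities. Either both tubes lie in a single face $F_i$, which then has $b_i-1=2$ tubes and so is a genus-$0$ surface with three boundary circles obtained by merging three disk faces of $\widehat L$ — this is precisely the attachment of a trident surface to a triple of disk regions; or the two tubes lie in two distinct faces, each then having $b_i=2$, which is precisely the attachment of two disjoint handles connecting two distinct pairs of disk regions. Taking the diagram ``$L$ in $S^2$'' of the statement to be $\widehat L$ finishes the proof.

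The step I expect to be the real work is the first one: extracting from the definition of a minimal surface the precise statements that every face is planar and that a face with $b_i$ boundary components is a tree of $b_i$ disks joined by $b_i-1$ tubes, together with the bookkeeping in the surgery step that $\widehat L$ has $n+H$ disk faces and the same universe. Once those are in place, the genus bound is immediate from Proposition~\ref{prop:euler}, and the only remaining delicate point is the connectivity of $\widehat\Sigma$ — which follows at once from the connectivity of $U$ — so that both Proposition~\ref{prop:euler} and the ``one tube $=$ one unit of genus'' count legitimately apply.
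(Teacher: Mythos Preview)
Your proof is correct and follows essentially the same strategy as the paper's: surger the tubes in the non-disk faces to obtain a tight diagram, apply Euler's formula (Proposition~\ref{prop:euler}) to bound the number of tubes by $2$, and then read off the two cases $g=1,2$ and the two handle-attachment patterns for $g=2$. Your version is more carefully argued than the paper's sketch---in particular you make explicit the tree structure of each face, the connectivity of $\widehat\Sigma$, and the genus bookkeeping $g=\widehat g+H=2-\widehat g$---but the underlying idea is the same.
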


\begin{proof}
Assume $L$ lies in its minimal genus surface in $\Sigma_g$ and that we can reduce the genus by cutting handles. This will increase $f$ by one for each handle we cut. We then obtain $L$ in $S_h$ where $h < g$ where it is a tight diagram with $f'<f
$ faces and $f’>n$. By Euler's formula, we know that $f’ - v = 2 - 2h$. Since $f’ - v > 0$, we have $2 - 2h > 0$ and so $h < 1$, whence $h = 0$. Thus $L$ in its minimal surface $\Sigma_g$ is obtained from a diagram $L$ in the 2-sphere $S^2$ by adding handles. We know that in $S^2$,  $f’ - v = 2$ and so we can only add two 1-handles. This yields the genus two case.  If $L$ is admissible in its minimal surface $\Sigma_g$, but there is no reduction by cutting handles, then all regions of $L$ are disks (that is, $L$ is tightly embedded in $\Sigma_g$ and we have $f - v = 0 = 2 - 2g$ implies $g =1$.

\end{proof}

\subsection{Two variants of the mock Alexander Polynomial}

\begin{definition}\normalfont
Let $L$ be a classical link diagram in $S^2$ and $\tilde{L}$ be the corresponding link diagram in a genus two surface that is obtained by adding a trident surface to $S^2$ that connects a triple of regions in $L$. The \textit{trident polynomial} of $\tilde{L}$ , $T_{\tilde{L}}$ is defined to be the mock Alexander polynomial of $\tilde{L}$.

\end{definition}

\begin{corollary}[Equivalent definition]
The trident polynomial of $\tilde{L}$,  is the permanent of the potential matrix of $\tilde{L}$.

\end{corollary}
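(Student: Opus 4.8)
The plan is to unwind the definitions and reduce to Proposition~\ref{prop:perm}. By the construction in this subsection, $\tilde{L}$ is an admissible link diagram in a genus-two surface, so $f = n$ and the potential matrix $M_{\tilde{L}}$ --- the square matrix whose rows are indexed by the crossings of $\tilde{L}$, whose columns are indexed by its regions, and whose $(v,R)$ entry is the sum of the local labels (as in Figure~\ref{fig:labelgeneral}) that the region $R$ receives at the crossing $v$ --- is well-defined, with entries among $0$, $1$, $W^{\pm 1}$ and short sums thereof. By definition the trident polynomial $T_{\tilde{L}}$ is the mock Alexander polynomial of $\tilde{L}$, that is, the specialization $B = W^{-1}$ of the potential $\nabla_{\tilde{L}}(W,B) = \sum_{S}\langle \tilde{L}\mid S\rangle$. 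Hence it suffices to prove $\nabla_{\tilde{L}}(W,B) = \mathrm{Perm}(M_{\tilde{L}})$ and then substitute $B = W^{-1}$.

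For this I would run the argument of Proposition~\ref{prop:perm}. Expanding the permanent, each non-zero summand $\prod_{i} M_{i\sigma(i)}$ records a choice by every region of a single incident crossing, i.e.\ a state of $\tilde{L}$, together with its weight --- except at a crossing $v$ incident to more than one of the three disk regions that the trident surface glued into the single region $T$. At such a $v$ the entry $M_{\tilde{L}}[v,T]$ is a genuine sum of two (or more, if several of the local faces at $v$ already coincided in $L$) labels, and multiplying it out splits the corresponding permanent summand into a sum of weights of states that agree everywhere except for the crossing at which $T$ places its marker. Collecting the permanent summands in this way therefore reproduces exactly the state sum $\nabla_{\tilde{L}}(W,B)$.

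The only genuinely new point, and the part that needs care, is this bookkeeping at the merged region $T$: one must check that grouping permanent summands according to which former region of $L$ supplies $T$'s label at each multiply-incident crossing yields a partition of the permanent expansion matching the partition of the set of states of $\tilde{L}$, with no omissions or repetitions. This is the same phenomenon already handled in the proof of Proposition~\ref{prop:perm} for the region incident to a linkoid endpoint (which there carries two labels and contributes to two states differing only at that crossing); the present situation differs only in that a single entry may now be a sum of more than two labels, which changes nothing essential in the correspondence. With this verified, $\mathrm{Perm}(M_{\tilde{L}}) = \nabla_{\tilde{L}}(W,B)$, and specializing $B = W^{-1}$ gives $T_{\tilde{L}} = \mathrm{Perm}(M_{\tilde{L}})$, as claimed.
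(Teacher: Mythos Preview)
Your proposal is correct and follows the same route the paper implicitly takes: the paper gives no explicit proof for this corollary, treating it as an immediate consequence of Proposition~\ref{prop:perm} applied to the admissible diagram $\tilde{L}$. Your write-up is in fact more careful than the paper, since you flag the one place where the argument of Proposition~\ref{prop:perm} needs a slight extension---namely that the merged region $T$ may receive more than two local labels at a single crossing---and correctly observe that this changes nothing in the bijection between permanent summands and states.
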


\begin{remark}\normalfont

The link diagram $L$ in $S^2$ can be considered as the planar representation of $\tilde{L}$ with the addition of nodes on the chosen triple of regions of $L$ that are connected by two handles to obtain $\tilde{L}$. Let $L_1$ and $L_2$ be two connected (classical) link diagrams in $S^2$ with nodes on triples of regions for each. $L_1$ and $L_2$ represent the same link in the genus two surface obtained by attaching two handles to $S^2$ if and only if one can be transformed to the other by a sequence of Reidemeister moves and spherical isotopy moves that take place far from the regions with nodes.

It is more convenient  to obtain the potential matrix of $\tilde{L}$ by using the planar representation, and  so to compute the trident polynomial of $\tilde{L}$.  The potential matrix of the trefoil diagram with columns determined by the regions $R,S,T$ in order, is given as follows.

$$
\begin{bmatrix}
1 & W & -W^{-1} +1\\
1 & -W^{-1} & W+1\\
1 &0 & W -W^{-1}+1\\
\end{bmatrix}
$$
The permanent calculation shows that $T_{\tilde{L}} (W) = 2(W^2 + W^{-2}) + 2(W -W^{-1}) -2$.
\end{remark}




\begin{theorem}

The trident polynomial is an invariant of classical connected link diagrams that is embedded in a genus two surface obtained by adding a trident surface connecting three regions of classical link diagrams in $S^2$.

\end{theorem}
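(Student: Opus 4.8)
The plan is to reduce the statement to Theorem~\ref{thm:invariance} in the zero-star case. First I would fix the equivalence relation under which ``invariant'' is being asserted: by the Remark preceding the statement, two connected classical diagrams in $S^{2}$, each carrying nodes on a chosen triple of regions, give the same link in the genus-two surface produced by the trident attachment exactly when one is carried to the other by Reidemeister moves RI, RII, RIII and spherical isotopy performed away from the noded regions. So the task is precisely to verify that $T_{\tilde{L}}(W)$ --- the permanent $Perm(M_{\tilde{L}})$, equivalently the state sum $\nabla_{\tilde{L}}(W)$ with $W=B^{-1}$ --- is unchanged under each such move.

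Second I would confirm that $\tilde{L}$ is a legitimate input for the mock Alexander machinery. The trident attachment fuses the three chosen regions of $L$ into the single region $T$, lowering the face count by two and hence converting the planar deficit $f-n=2$ (Proposition~\ref{prop:euler}) into $f=n$ in $\Sigma_{2}$; connectedness is clearly preserved, and the entries of $M_{\tilde{L}}$ depend only on the region--crossing incidence data, so the precise location of the deleted disks within the three regions is immaterial. Thus $\tilde{L}$ is an admissible link diagram in a genus-two surface with zero stars and $T_{\tilde{L}}(W)$ is defined. The only point requiring attention is that $T$ is a pair of pants, not a disk; I would check that this is harmless for the state/permanent formalism --- $T$ still borders crossings and may receive its marker at any incident crossing, and if it meets a crossing in several quadrants its column entry is, as always, the sum of the corresponding labels.

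The decisive step is to invoke Theorem~\ref{thm:invariance}: for a starred link or linkoid diagram in a surface of any genus $g\ge 0$, the potential is invariant under Reidemeister moves away from starred regions or vertices, and under surface isotopy, provided $WB=1$. Since $\tilde{L}$ has no stars, all RI/RII/RIII moves and all surface isotopies are permitted --- in particular the more restrictive moves of the Remark, which by hypothesis are confined to the disk part of the diagram and never touch $T$. I would stress that the local state-expansion computations proving Theorem~\ref{thm:invariance} (and the RIII cases imported from \cite{FKT}) depend only on the counting identity $f=n$, preserved by each Reidemeister move, and on the local picture at the move site, never on the global topology of the regions involved; they therefore transcribe verbatim here. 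Chaining the three steps shows $T_{\tilde{L}}(W)$ depends only on the equivalence class, which is the assertion.

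The expected difficulty is bookkeeping, not a new idea: one must be genuinely certain that the non-disk region $T$ interferes neither with the definition of a state nor with any case of the inherited Reidemeister analysis. Because the admissible moves are exactly those away from the noded regions, the move analysis only ever sees disk regions and planar local pictures, so it reduces to already-established cases; the one-off definitional check that $T$ behaves well in the permanent formula is the sole genuinely new verification.
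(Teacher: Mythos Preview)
Your proposal is correct and follows essentially the same approach as the paper: the trident polynomial is by definition the mock Alexander polynomial of $\tilde{L}$, so invariance follows from Theorem~\ref{thm:invariance}. The paper's proof is a single sentence (``This follows by the definition of the trident polynomial''), whereas you have carefully unpacked the bookkeeping --- in particular the point that $T$ is not a disk and that the Reidemeister analysis only sees local pictures away from it --- but the underlying argument is the same reduction.
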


\begin{proof}
This follows by the definition of the trident polynomial.

\end{proof}

In a similar manner, one can define the following variant of the Mock Alexander polynomial for admissible linkd diagrams in a genus two surface, due to the second handle attaching construction of an admissible diagram in a genus two surface. 

\begin{definition}\normalfont
Let $L$ be a classical link diagram in $S^2$ and $\tilde{L}$ be the corresponding link diagram in a genus two surface that is obtained by adding two disjoint handles to $S^2$ that connects two distinct pairs of regions of $L$.
The \textit{handle polynomial} of $\tilde{L}$ , $T_{\tilde{L}}$ is defined to be the Mock Alexander polynomial of $\tilde{L}$.
\end{definition}

\begin{corollary}[Equivalent definition]
The handle polynomial of $\tilde{L}$,  is the permanent of the potential matrix of $\tilde{L}$.

\end{corollary}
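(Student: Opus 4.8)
The plan is to deduce this immediately from Proposition~\ref{prop:perm} together with the definitions of the handle polynomial and of the mock Alexander polynomial. First I would observe that $\tilde{L}$ is admissible in the genus two surface: by the handle-attaching construction just described, attaching two disjoint handles that join two distinct pairs of regions of $L$ identifies each chosen pair into a single region, hence decreases the number of regions by two; since $L$ satisfies $f = n + 2$ in $S^2$ (Proposition~\ref{prop:euler}), the resulting diagram $\tilde{L}$ satisfies $f = n$. Thus $\tilde{L}$, regarded as a starred diagram with zero stars, has a well-defined \emph{square} potential matrix $M_{\tilde{L}}$, and Proposition~\ref{prop:perm} applies to it.

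Next I would invoke Proposition~\ref{prop:perm}, which asserts that for a starred link or linkoid diagram with $n \geq 1$ crossings and regions one has $\nabla_{\tilde{L}}(W,B) = Perm(M_{\tilde{L}})$. By definition the handle polynomial $T_{\tilde{L}}$ is the mock Alexander polynomial of $\tilde{L}$, which in turn is the potential $\nabla_{\tilde{L}}$ under the substitution $W = B^{-1}$. Specializing the entries of $M_{\tilde{L}}$ accordingly (so that every entry lies in $\{0,\,1,\,W^{\pm 1},\,1 \pm W^{\pm 1}\}$) and using that $Perm$ is a polynomial in the matrix entries, we get $T_{\tilde{L}}(W) = Perm(M_{\tilde{L}})$, which is the assertion. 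This is, in fact, the same specialization argument already recorded in the text for the general mock Alexander polynomial.

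There is essentially no obstacle here; the corollary is formal once Proposition~\ref{prop:perm} is in hand. The only point that genuinely needs a word of justification is the admissibility of $\tilde{L}$, i.e.\ that the two-handle construction produces a diagram with $f = n$ so that $M_{\tilde{L}}$ is square and the permanent is meaningful — and that is exactly what the preceding paragraph of the paper establishes. One could therefore state the proof in a single line ("immediate from Proposition~\ref{prop:perm} and the definition of the handle polynomial"), but including the admissibility check makes the statement self-contained.
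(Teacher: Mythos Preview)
Your proposal is correct and matches the paper's approach: the paper states this corollary without an explicit proof, treating it as an immediate consequence of Proposition~\ref{prop:perm} together with the definition of the handle polynomial as the mock Alexander polynomial of $\tilde{L}$. Your inclusion of the admissibility check (that the two-handle construction yields $f=n$) is a helpful addition that makes the argument self-contained, but the core reasoning is identical.
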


\begin{theorem}

The handle polynomial is an invariant of classical connected link diagrams that is embedded in a genus two surface obtained by adding a trident surface connecting three regions of classical link diagrams in $S^2$.

\end{theorem}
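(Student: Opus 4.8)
The plan is to derive the statement directly from the definition of the handle polynomial together with the invariance of the mock Alexander polynomial established in Theorem \ref{thm:invariance}, mirroring the one-line argument already used for the trident polynomial. By construction, the diagram $\tilde{L}$ obtained by attaching the trident surface to $S^2$ along three chosen regions of $L$ is an admissible link diagram in a genus two surface: the three selected regions are amalgamated into a single region of $\tilde{L}$, so the equality $f = n$ holds and $\tilde{L}$ carries zero stars. The handle polynomial $T_{\tilde{L}}(W)$ is by definition the mock Alexander polynomial $\nabla_{\tilde{L}}(W)$, which by Proposition \ref{prop:perm} equals the permanent of the square potential matrix $M_{\tilde{L}}$. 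Thus it suffices to show that $\nabla_{\tilde{L}}(W)$ depends only on the genus two link represented by $\tilde{L}$ and not on the chosen planar representative.

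First I would fix the correct equivalence relation on the planar data. A connected classical link diagram $L$ in $S^2$ together with a marking of the three regions along which the trident surface is glued encodes the genus two link $\tilde{L}$, and two such marked planar diagrams represent the same genus two link precisely when they are related by a sequence of Reidemeister moves and spherical isotopies that take place away from the marked regions; a move that would push a strand into or across one of the three glued regions alters the embedding in $\Sigma_2$ and is forbidden. In this way the marked regions play exactly the role of starred regions in the star equivalence of Section~\ref{sec:starred}: they are frozen, and no move may be performed through them.

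Next I would invoke Theorem \ref{thm:invariance}. Since $\tilde{L}$ is admissible in $\Sigma_2$, the invariance of $\nabla_{\tilde{L}}(W)$ under RI, RII and RIII moves performed in the complement of the marked regions, under the assumption $W = B^{-1}$, is exactly what Theorem \ref{thm:invariance} provides: the local state-expansion computations there are carried out for a single crossing and its incident regions and are insensitive to whether a distant region happens to be one of the amalgamated trident regions. Invariance under isotopy of $\Sigma_2$ away from the marked regions is immediate, since such isotopy permutes neither the crossings nor the regions that carry state markers. Combining these, $\nabla_{\tilde{L}}(W)$ is unchanged under every move in the defining equivalence relation, which gives the claimed invariance.

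The main obstacle is verifying that the three regions amalgamated by the trident surface genuinely behave as a single frozen region for the state sum, so that the local arguments of Theorem \ref{thm:invariance} apply without modification. Concretely one must check that the amalgamated region $T$ contributes to $M_{\tilde{L}}$ as an ordinary single column, that no state of $\tilde{L}$ is forced to place a marker which would have to be ``threaded'' through the glued region, and that a Reidemeister move occurring near $T$ (but still in the complement of $T$ in the universe) does not covertly change the incidence pattern recorded by $M_{\tilde{L}}$. Once it is confirmed that the forbidden moves are exactly those excluded by the definition of genus two equivalence, the invariance follows verbatim from Theorem \ref{thm:invariance}, precisely as in the trident polynomial case.
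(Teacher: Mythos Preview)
Your proposal is correct and takes essentially the same approach as the paper: the paper's proof consists of the single sentence ``This follows by the definition of the handle polynomial,'' and your argument simply unpacks that sentence by noting that the handle polynomial is by definition the mock Alexander polynomial of the resulting admissible genus two diagram and then invoking Theorem~\ref{thm:invariance}. Your additional discussion of why the amalgamated region behaves as a single frozen column and why the planar equivalence coincides with star equivalence makes explicit what the paper leaves implicit, but the underlying logic is identical.
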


\begin{proof}
This follows by the definition of the handle polynomial.

\end{proof}

\begin{Remark}\normalfont
\begin{enumerate}
\item The link diagram that lies in the genus two surface obtained by attaching the trident to $S^2$ connecting three distinct regions of the diagram in $S^2$, is clearly not a tight embedding as the connected regions contain genus.
\item A similar construction applies to knotoid diagrams in $S^2$ as follows. Choose a pair of regions in a knotoid diagram in $S^2$ and delete a pair of open disks out from these regions. Then attach a handle to $S^2$ over the boundaries of the deleted disks. In this way, one obtains a knotoid diagram immersed in torus which is an admissible diagram since two distinct regions in $S^2$ turn out to be the same region in the torus. Then, the Mock Alexander polynomial is an available invariant for knotoids immersed in torus. We plan to work on this observations further in a subsequent paper.

\end{enumerate}
\end{Remark}

\subsubsection{Virtual closure of knotoids}
Let $K$ be a knotoid diagram in $S^2$. The virtual closure of $K$ is obtained by connecting two endpoints of $K$ with a simple arc that may intersect $K$. Each intersection of the connecting arc with $K$ is declared to be a virtual crossing so that the resulting closed diagram is a virtual knot diagram in $S^2$. The virtual closure of any knotoid diagram can be represented in a torus by attaching a handle to $S^2$ that holds the connecting arc realizing the closure. Figure \ref{fig:virtualclosure} depicts the virtual closure of a knotoid diagram and its torus representation.

 \begin{figure}[H]
\centering
\includegraphics[width=1\textwidth]{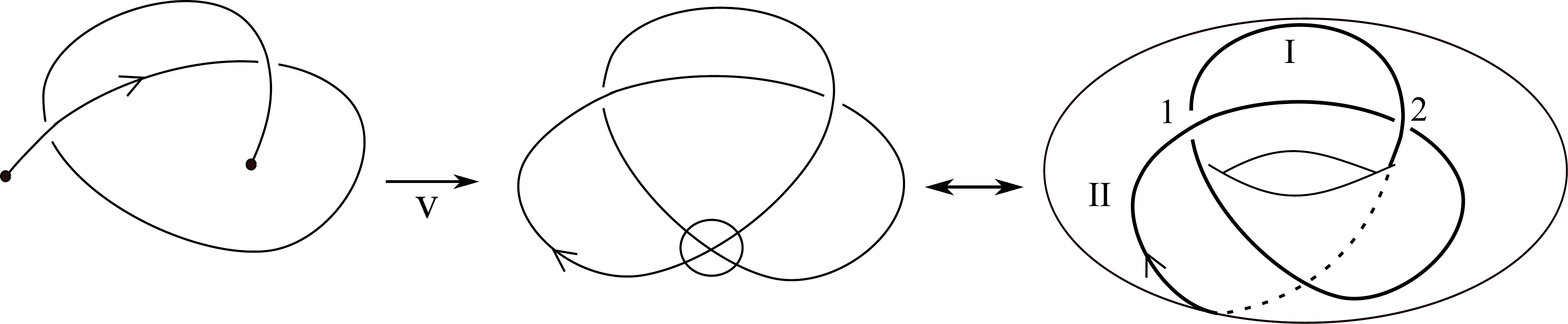}
\caption{The virtual closure of a knotoid}
\label{fig:virtualclosure}
\end{figure}

 The torus representation of the virtual closure of any knotoid diagram is an admissible diagram in the torus. The potential matrix of the resulting knot $v(K)$ in the torus, appearing in Figure \ref{fig:virtualclosure} is given as 
 $
\begin{bmatrix}
-W^{-1} & W+2\\
W& 2-W^{-1} \\
\end{bmatrix}.
$ By direct calculation of the permanent, we find that $\nabla_{v(K)} (W) = W^2 + W^{-2}+ 2( W-W^{-1}).$  Notice that in this case,  the mock Alexander polynomial of $v(K)$ is in fact, equal to $ \nabla_{K_{ext}}(W) + \nabla _{K_{int}} (W)$. See Example \ref{ex:simpleknotoid} where we calculate $\nabla^{\sharp}_{K}(W) = \nabla_{K_{ext}} (W)$
\begin{lemma}
Let $K$ be a knotoid diagram in $S^2$ whose tail lies at the exterior region and $v(K)$ denote the torus representation of its virtual closure. Then $$\nabla_{v(K)} (W)= \nabla _{K_{ext}} (W) + \nabla_{K_{int}}(W).$$

\end{lemma}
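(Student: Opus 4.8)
The plan is to identify the potential matrix of $v(K)$ as a column--merge of the potential matrix of the underlying knotoid diagram $K$, and then to invoke multilinearity of the permanent. Write $R_t$ for the region of $K$ containing the tail (the exterior region, by hypothesis) and $R_h$ for the region containing the head. By Proposition~\ref{prop:euler} a knotoid diagram with $n$ crossings in $S^2$ has $f=n+1$ regions, so the potential matrix $M_K$ of the underlying diagram has $n$ rows (crossings) and $n+1$ columns (regions). By definition $M_{K_{ext}}$ is $M_K$ with the $R_t$--column deleted and $M_{K_{int}}$ is $M_K$ with the $R_h$--column deleted, and by Proposition~\ref{prop:perm} their permanents are $\nabla_{K_{ext}}(W)$ and $\nabla_{K_{int}}(W)$.

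First I would check the combinatorics of $v(K)$. Passing to the virtual closure and realizing the connecting arc $\gamma$ over a handle attached along small disks in the interiors of $R_t$ and $R_h$ introduces no new crossings and changes no crossing locally: $\gamma$ stays inside $R_t\cup R_h$ except on the handle, and it meets the rest of the diagram only at the former tail and head, which are not crossings. Hence $v(K)$ has the same $n$ crossings as $K$, every region of $K$ other than $R_t$ and $R_h$ survives unchanged, and $R_t,R_h$ are fused along the handle and then cut by $\gamma$ into a single region $T$; since $R_t$ and $R_h$ with a disk removed from each, joined by the handle, form an annulus and $\gamma$ joins its two boundary circles, $T$ is a disk, so $v(K)$ is tightly embedded in the torus with $n$ regions --- i.e.\ admissible, as asserted above --- and its potential matrix $M_{v(K)}$ is $n\times n$ with $\mathrm{Perm}(M_{v(K)})=\nabla_{v(K)}(W)$.

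The crucial observation is then that the $T$--column of $M_{v(K)}$ is exactly the sum of the $R_t$--column and the $R_h$--column of $M_K$. Indeed, at any crossing $c$ the quadrants incident to $T$ are precisely the $R_t$--quadrants of $c$ together with the $R_h$--quadrants of $c$ (the fusion and the cut occur away from $c$), so the $(c,T)$--entry of $M_{v(K)}$ is the sum of the $(c,R_t)$-- and $(c,R_h)$--entries of $M_K$, while every other column of $M_{v(K)}$ equals the corresponding column of $M_K$. Writing $M_{v(K)}$ as $M_K$ with the $R_t$-- and $R_h$--columns deleted and the column $\mathrm{col}(R_t)+\mathrm{col}(R_h)$ appended, and using that the permanent is linear in each column,
\[
\mathrm{Perm}(M_{v(K)}) = \mathrm{Perm}\bigl(M_K \text{ with } R_t,R_h \text{ replaced by } \mathrm{col}(R_t)\bigr) + \mathrm{Perm}\bigl(M_K \text{ with } R_t,R_h \text{ replaced by } \mathrm{col}(R_h)\bigr).
\]
The first matrix has columns indexed by all regions except $R_h$, i.e.\ it is $M_{K_{int}}$; the second has columns indexed by all regions except $R_t$, i.e.\ it is $M_{K_{ext}}$. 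Applying Proposition~\ref{prop:perm} to $v(K)$, $K_{ext}$ and $K_{int}$ yields $\nabla_{v(K)}(W)=\nabla_{K_{ext}}(W)+\nabla_{K_{int}}(W)$.

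The main obstacle is the bookkeeping in the second step: verifying that threading $\gamma$ over the handle creates no spurious region--crossing incidences and that the merged region is a disk, and --- in the degenerate case $R_t=R_h$ (head also in the exterior region) --- checking that the construction still produces an admissible diagram. In that degenerate case $K_{ext}=K_{int}$, the $T$--column becomes $2\,\mathrm{col}(R_t)$, and multilinearity produces exactly the factor $2$ matching $\nabla_{K_{ext}}(W)+\nabla_{K_{int}}(W)$, so the identity persists; alternatively one can argue at the level of the state sum, setting up a weight--preserving bijection between the states of $v(K)$ and the disjoint union of the states of $K_{ext}$ and $K_{int}$ according to whether the marker of $T$ sits in an $R_t$--quadrant or an $R_h$--quadrant.
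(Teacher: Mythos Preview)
Your argument is correct and is essentially the same as the paper's: both identify $M_{v(K)}$ as $M_K$ with the $R_t$- and $R_h$-columns merged into a single column equal to their sum, and then use multilinearity of the permanent to split $\mathrm{Perm}(M_{v(K)})$ as $\mathrm{Perm}(M_{K_{ext}})+\mathrm{Perm}(M_{K_{int}})$. The paper's proof simply asserts the column-sum description of $M_{v(K)}$ and concludes; your version supplies the geometric justification (no new crossings, the merged region is a disk, tight embedding in the torus) and handles the degenerate case $R_t=R_h$, which the paper does not discuss.
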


\begin{proof}
Let $M_{K_{int}}$ and $M_{K_{ext}}$ be the potential matrices of $K_{int}$ and $K_{ext}$ with size $n \times n$, respectively. $M_{K_{int}}$ and $M_{K_{ext}}$ coincide except for one unique column which represents the exterior and the interior regions of $K_{int}$ and $K_{ext}$, respectively. Without loss of generality, we can assume these columns are the $n^{th}$ columns of the matrices.  It is clear that the potential matrix of $M_{v(K)}$ consists of the same columns with $M_{K_{int}}$ and $M_{K_{ext}}$ except its  $n^{th}$ column is the sum of the $n^{th}$ columns of  $M_{K_{int}}$ and $M_{K_{ext}}$. From this, it follows that $\nabla_{v(K)} (W)= \nabla _{K_{ext}} (W) + \nabla_{K_{int}}(W)$.
\end{proof}
 
 From Conjecture \ref{conj:ii}, we obtain the following equality.
\begin{proposition} 
Let $K$ be a knotoid in $S^2$ and $v(K)$ its virtual closure.

$$\nabla_{v(K)} (W) = \nabla^{\sharp}_{K} (W) + \nabla^{\sharp} _{K} (-W^{-1}).$$

\end{proposition}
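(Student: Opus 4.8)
The plan is to deduce the formula from the preceding Lemma together with Conjecture \ref{conj:ii}; the only genuinely new observation needed is that $W \mapsto -W^{-1}$ is an involution on $\mathbb{Z}[W,W^{-1}]$. First I would normalise the diagram: since $\nabla^{\sharp}_{K}$ is an invariant of the knotoid $K$ (by Theorem \ref{thm:invariance}) and $v(K)$ is built canonically from $K$, I may replace $K$ by any convenient diagram representing it, and by Turaev's result \cite{Turaev} I choose one whose tail lies in the exterior region. For such a representative the region adjacent to the tail \emph{is} the exterior region, so by the definition of $\nabla^{\sharp}$ we have $\nabla_{K_{ext}}(W) = \nabla^{\sharp}_{K}(W)$.

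Next I would apply the preceding Lemma, which gives $\nabla_{v(K)}(W) = \nabla_{K_{ext}}(W) + \nabla_{K_{int}}(W) = \nabla^{\sharp}_{K}(W) + \nabla_{K_{int}}(W)$, where $K_{int} = K_{in}$ is the starred diagram carrying a star on the region containing the head of $K$. It therefore suffices to show $\nabla_{K_{int}}(W) = \nabla^{\sharp}_{K}(-W^{-1})$. For this I invoke Conjecture \ref{conj:ii} in the form $\nabla^{\sharp}_{K}(W) = \nabla_{K_{ext}}(W) = \nabla_{K_{in}}(-W^{-1}) = \nabla_{K_{int}}(-W^{-1})$. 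Substituting $W \mapsto -W^{-1}$ into this identity and using $(-W^{-1})^{-1} = -W$, hence $-(-W^{-1})^{-1} = W$, shows that applying $W \mapsto -W^{-1}$ twice is the identity, so the substitution yields $\nabla^{\sharp}_{K}(-W^{-1}) = \nabla_{K_{int}}(W)$. Combining the two displayed equalities gives $\nabla_{v(K)}(W) = \nabla^{\sharp}_{K}(W) + \nabla^{\sharp}_{K}(-W^{-1})$.

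Since everything rests on results already available (granting Conjecture \ref{conj:ii}), I do not expect a substantive obstacle. The only points requiring care are bookkeeping ones: checking that, once the tail has been isotoped to the exterior, the region ``adjacent to the tail'' coincides with the exterior region, and that the $K_{int}$ of the Lemma is exactly the $K_{in}$ of Conjecture \ref{conj:ii}; and noting that $\nabla^{\sharp}_{K} \in \mathbb{Z}[W,W^{-1}]$ so that the evaluation at $-W^{-1}$ and its iterate are well-defined Laurent polynomials. If one wanted an unconditional proof one would instead have to establish $\nabla_{K_{int}}(W) = \nabla^{\sharp}_{K}(-W^{-1})$ directly --- essentially the starred-region-exchange symmetry expressed by Conjectures \ref{conj:conj1} and \ref{conj:ii} --- and \emph{that} is the hard part; modulo the conjecture, the proposition is immediate.
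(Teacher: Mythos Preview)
Your proposal is correct and follows essentially the same approach as the paper: the paper simply states that the proposition is obtained ``from Conjecture~\ref{conj:ii}'' applied to the preceding Lemma, and you have spelled out precisely those details, including the involution $W\mapsto -W^{-1}$ needed to pass from $\nabla_{K_{ext}}(W)=\nabla_{K_{in}}(-W^{-1})$ to $\nabla_{K_{int}}(W)=\nabla^{\sharp}_{K}(-W^{-1})$. Your care in normalising the diagram so that the tail lies in the exterior region and in identifying $K_{int}$ with $K_{in}$ makes explicit what the paper leaves implicit.
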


\begin{corollary}\label{cor:notclosure}
The potential matrix of the virtual knot in torus, depicted in Figure \ref{fig:notvirtualclosure} is,$$\begin{bmatrix}
W&-W^{-1} &2\\
W^{-1}&W& 2 \\
0&1&1+W-W^{-1}\\
\end{bmatrix}.
$$

It can be verified by direct permanent calculation that $\nabla_{K} (W) = 2W^2 + W- W ^{-1} + W^{-2} - W^{-3} -1$.  $\nabla_K$ cannot be written as the sum $\nabla_{K} (W) + \nabla _{K} (-W^{-1})$ for some knotoid $K$, as the symmetric term for $W^{-3}$, $-W^3$ is missing in the polynomial. Thus, the virtual knot depicted in Figure \ref{fig:notvirtualclosure} is not the virtual closure of a knotoid. 

\end{corollary}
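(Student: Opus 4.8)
The statement has two parts, and the plan is to handle them in order: first confirm the displayed potential matrix, then use the structural constraint on virtual closures from the Proposition just above to rule out realizability as a virtual closure of a knotoid.

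For the matrix, the plan is simply to read off the local region labels from Figure~\ref{fig:notvirtualclosure}. The virtual knot pictured there sits in the torus as an admissible diagram with three classical crossings and three regions (the virtual crossings lie in the attached handle and are not counted), so by the definition of the potential matrix it is $3 \times 3$, and its $(i,j)$ entry is the sum of the labels of Figure~\ref{fig:labelgeneral}, taken with $W = B^{-1}$, that the $i$th crossing contributes to the $j$th region. Going crossing by crossing: each entry $2$ records a region meeting a crossing in two quadrants whose labels add to $2$, the $0$ records non-incidence, and the entry $1 + W - W^{-1}$ records a region occupying quadrants of a single crossing whose labels add to $1 + W - W^{-1}$. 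Assembling these reproduces the matrix in the statement.

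Next I would compute $\nabla_K(W)$ as the permanent of this matrix: the expansion $\sum_{\sigma \in S_3}\prod_i M_{i\sigma(i)}$ has six terms, two of which vanish because of the $0$ in position $(3,1)$, and collecting the rest gives the Laurent polynomial displayed; by Proposition~\ref{prop:perm} this is the mock Alexander polynomial of the torus diagram. Now suppose, for contradiction, that this virtual knot were the torus representation of the virtual closure of some knotoid $K'$ in $S^2$ (with its tail in the exterior region, which by \cite{Turaev} is no loss of generality). Then the Proposition immediately preceding this corollary gives $\nabla_K(W) = \nabla^{\sharp}_{K'}(W) + \nabla^{\sharp}_{K'}(-W^{-1})$, so $\nabla_K$ has the form $p(W) + p(-W^{-1})$ and hence is invariant under the involution $W \mapsto -W^{-1}$. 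Writing $\nabla_K(W) = \sum_k a_k W^k$, that invariance forces $a_k = (-1)^k a_{-k}$ for every $k$; but in the computed polynomial the coefficient of $W^{-3}$ is $-1 \ne 0$ while the coefficient of $W^3$ is $0$, so the relation fails at $k = 3$, i.e. the symmetric counterpart of the $W^{-3}$ term is absent. This contradiction shows the virtual knot of Figure~\ref{fig:notvirtualclosure} is not the virtual closure of any knotoid.

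Everything here is routine. The only step that needs care is transcribing the local region labels at the three crossings of Figure~\ref{fig:notvirtualclosure} correctly, and (if one wants to be fully explicit) checking that the nonzero permanent terms collect to exactly the stated polynomial; after that the argument rests on the single elementary fact that any Laurent polynomial of the form $p(W) + p(-W^{-1})$ is fixed by $W \mapsto -W^{-1}$. I do not anticipate any real obstacle beyond this bookkeeping.
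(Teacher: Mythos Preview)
Your proposal is correct and follows essentially the same approach as the paper: the corollary in the paper is self-contained (there is no separate proof environment), and its reasoning is exactly to display the matrix, assert the permanent value, and note that the ``symmetric term for $W^{-3}$, $-W^3$'' is absent. Your write-up simply makes that last step precise by observing that any polynomial of the form $p(W)+p(-W^{-1})$ is fixed by the involution $W\mapsto -W^{-1}$, hence satisfies $a_k=(-1)^k a_{-k}$; this is a clean sharpening of the paper's informal remark and adds nothing foreign to the argument.
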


 \begin{figure}[H]
\centering
\includegraphics[width=.3\textwidth]{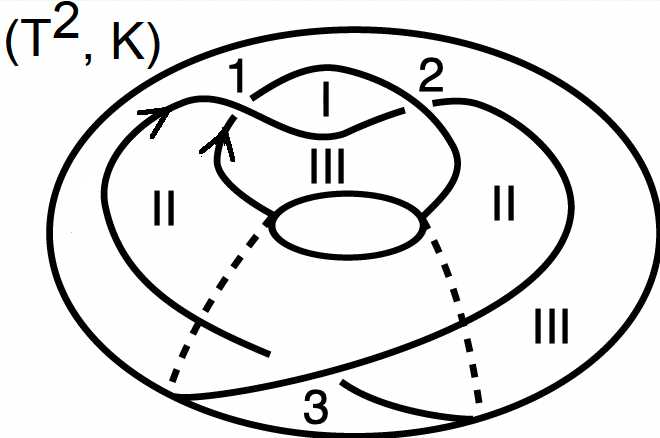}
\caption{A virtual knot in $T^2$ which is not the virtual closure of a knotoid}
\label{fig:notvirtualclosure}
\end{figure}

\begin{remark}
It was shown in \cite{GK2} that there are infinitely many virtual knots of virtual genus one obtained by virtualization and that do not lie in the image of the virtual closure map. This was shown by utilizing the surface bracket polynomial of virtual knots \cite{Dye}. As Corollary \ref{cor:notclosure} shows,  $\nabla_{v(K)}$ can be also used to determine whether a virtual knot of virtual genus one, is the virtual closure of a knotoid in $S^2$ or not.  

\end{remark}

The virtual closure map $v$ is  a well-defined map on the set of knotoids in $S^2$ to the set of virtual knots of virtual genus at most $1$. This implies that any virtual knot invariant gives rise to an invariant of knotoids in $S^2$ via the virtual closure map \cite{GK1}. Therefore, for every knotoid diagram $K$ in $S^2$, the mock Alexander polynomial of the torus representation of  the virtual closure of $K$ can be considered an invariant of $K$. 

\begin{definition}\normalfont
Let $K$ be a knotoid diagram in $S^2$, and $v(K)$ be the virtual closure of $K$, represented in a torus. The virtual Alexander polynomial of $K$, $\nabla_{K}^{v}$ is defined to be the mock Alexander polynomial of $v(K)$. For any knotoid $K$,  $\nabla_{K}^{v}$ is an invariant of $K$.

\end{definition}

\subsection{Starred links vs. links in handlebodies}
 In Section \ref{sec:decorated}, we have introduced starred link diagrams  in surfaces as link diagrams admitting stars on a number their regions or crossings so that the diagrams satisfy the equality $f=n$. In this section, we present the notion of a starred link diagram that lies in $\mathbb{R}^2$ in a more general sense.
 
 \begin{definition}\normalfont
   A \textit{region starred link diagram} in $\mathbb{R}^2$ is a classical link diagram with $n$ crossings such that $g$ of its regions, where $0 \leq g \leq n+2$, admit a number of stars.  A region starred link diagram is not necessarily an admissible diagram and some of its regions may get multiple stars.
 We assume that two region starred link diagrams are \textit{equivalent} if there is a sequence of star equivalence moves and planar isotopy moves transforming one to another.

A \textit{region starred link} in $\mathbb{R}^2$ is an equivalence class of starred link diagrams in $\mathbb{R}^2$ taken up to star equivalence.

 \end{definition}
 
Let $L$ be a link diagram in $\mathbb{R}^2$ with $g \geq 0$ of its regions are starred.  $L$ represents a unique link diagram embedded in genus $g$ handlebody by considering $L$ in $\mathbb{R}^2 \times I$, where $I$ is the unit interval $[0,1]$, and drilling holes in $\mathbb{R}^2 \times I$ through the starred regions. Conversely, a link diagram in a genus $g$ handlebody uniquely represents a starred link diagram in $\mathbb{R}^2$ by taking the projection of the link diagram onto  $\mathbb{R}^2$, and representing each genus as a star in a region. This correspondence induces a bijection between the set of  all region starred links in $\mathbb{R}^2$ and the set of links in a handlebody of genus $g$. This correspondence was initially utilized in \cite{Bat} for the construction of the Jones polynomial of links in handlebodies.

\section{Discussion}\label{sec:discussion}

The results of the present paper arise from generalizing the methods of \cite{FKT}. This paper is the first of a series of papers on these generalizations, which will include wider versions of the Clock Theorem of Formal Knot Theory  and other applications of these ideas.

All the invariants discussed in this paper are expressed by state summations that can be expressed as permanents of matrices associated with the diagram of a knot, link, linkoid or knotoid. In principle, permanents are of complexity higher than the polynomial time algorithms that compute determinants. At this time it remains an open problem to determine the complexity type of our permanent based invariants. This problem will be the subject of a paper subsequent to the present work. See \cite{Jaeger} where complexity results about the Jones polynomial are proved.

 Consider the starred knotoid diagram and its states depicted in Figure \ref{fig:labeltwo}. Given a state of $K$, each crossing can be resolved in the direction of the state. This gives an \textit{Eulerian trail} underlying $K$ that is a path from one endpoint to another, visiting all edges of $K$ exactly once.  In fact, there is a one-to-one correspondence between the states of a starred knotoid diagram and the Eulerian trails underlying the graph of the knotoid diagrams. Moreover, each Eulerian trail determines a unique spanning tree of a generalized Tait graph of $K$. In Figure \ref{fig:tree}, we illustrate all Eulerian trails and the corresponding spanning trees (in blue) of $K$.
  \begin{figure}[H]
\centering
\includegraphics[width=1\textwidth]{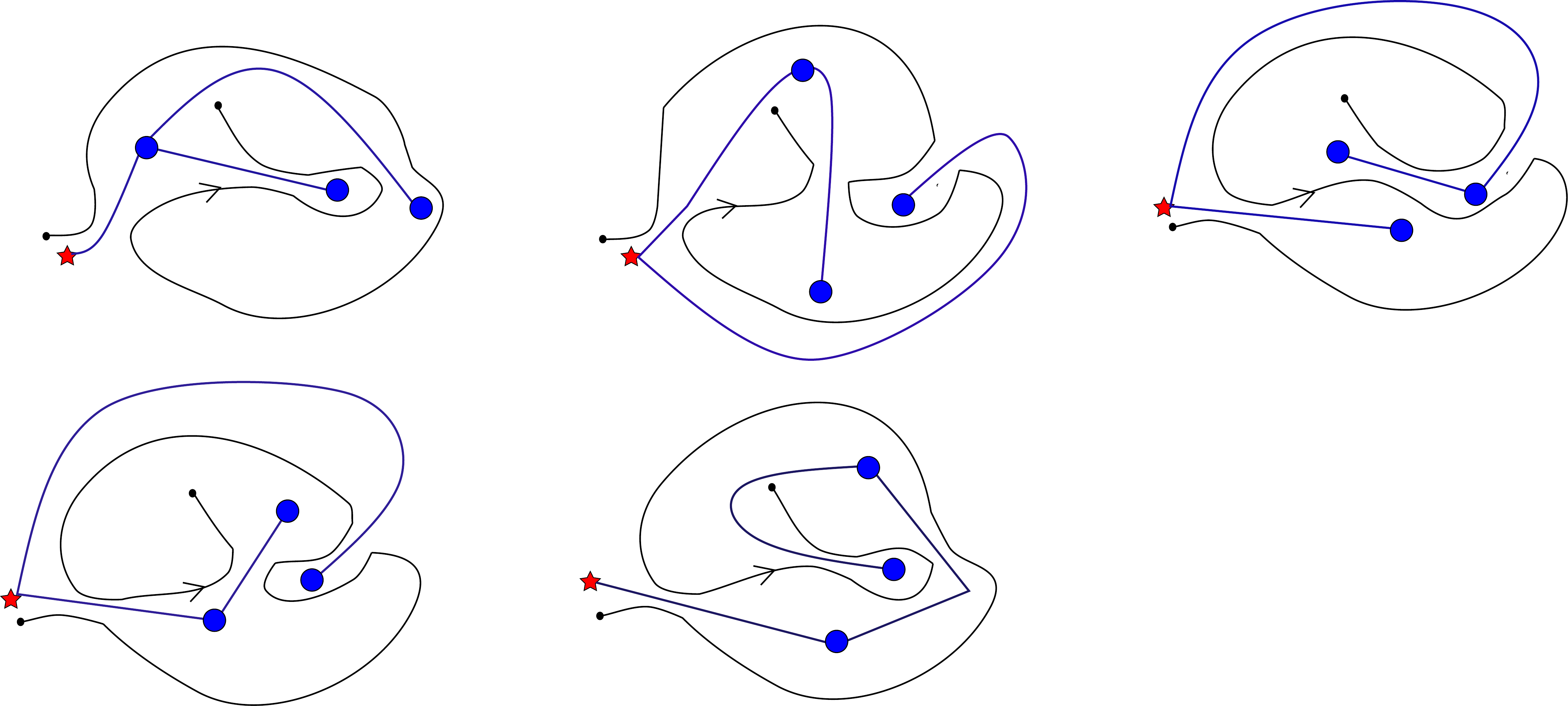}
\caption{Euler trails and spanning trees determined by states of $K$.}
\label{fig:tree}
\end{figure}
 There are versions of Heegaard Floer Link homology that are based on the correspondence between the Formal Knot Theory states and spanning trees of a link diagram. See \cite{Levine, Ozsvath, KrizKriz}.
We are in the process of using our generalizations of these states to study new versions of link homology for knotoids and their relatives as discussed in the present paper.

\section{Appendix}\label{sec:appendix}
Consider a general labeling at crossings of a starred link or linkoid diagram with commuting variables $U,D,L,R, U^{'}, D^{'}, L^{'}, R^{'}$, as given in Figure \ref{fig:apendix}, and a generalized state-sum with respect to such labeling. We obtain the following conditions on the labels, if we impose invariance under an RII move on the state-sum.
\begin{figure}[H]
\centering
\includegraphics[width=.45\textwidth]{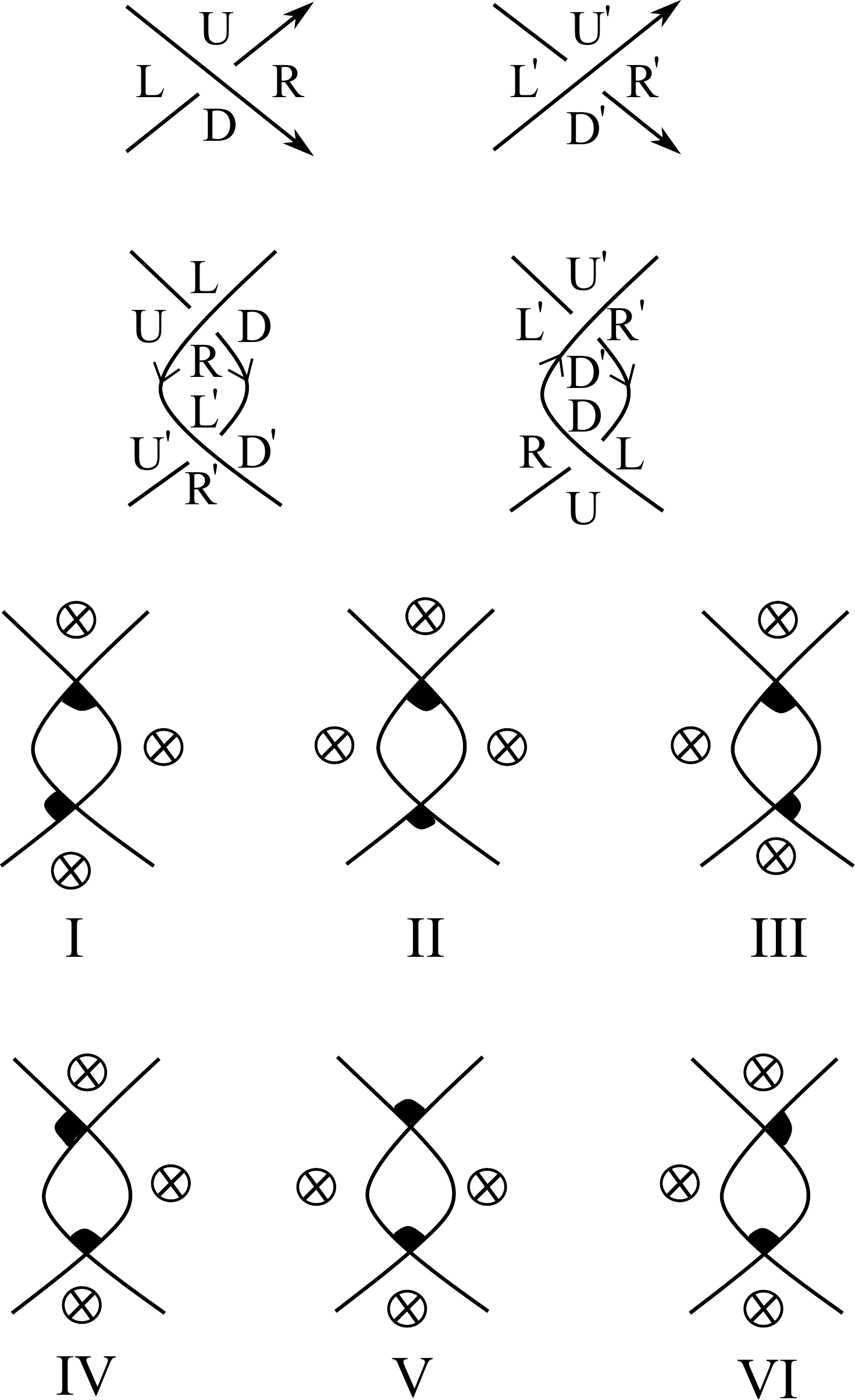}

\caption{General labeling at crossings and state configurations at an RII move site.}
\label{fig:apendix}
\end{figure}

If the RII move is of first type depicted on the left hand side of the second row in Figure \ref{fig:apendix}, then from states II and V  we have,\\
$$RR^{'} =1 = L L^{'}.$$
From the states I and IV, we have 
$$R U^{'} + U L^{`} = 0. $$
From state III and VI, we have
$$ RD^{`} + D L^{'} = 0.$$
If the RII move if the second type depicted on the right hand side of the second row in Figure \ref{fig:apendix}, then from states II and V, we have
$$D^{'} U =1 = U^{'} D.$$
From states I and IV, we have
$$RD^{'} + L^{'} D = 0.$$
From states III and VI, we have
$$D^{'} L + R^{'} D =0.$$

\begin{figure}[H]
\centering
\includegraphics[width=.35\textwidth]{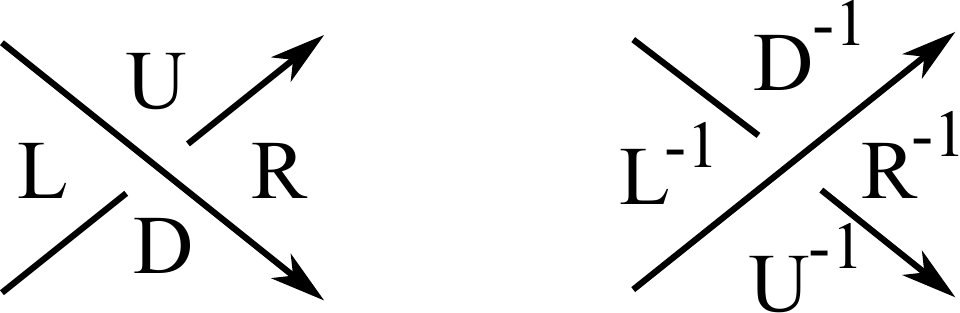}

\caption{Necessary labeling for invariance under an RII move.}
\label{fig:apendixx}
\end{figure}

By substituting the new labels into equations, we find
$$R = -UDL^{-1}$$ 
$$L= -UD R^{-1}.$$
Then by direct calculation, $$R= -(UD)^{-1} L^{-1}.$$
Therefore $(UD)^{2}= 1$. We assume $UD=1$, then the final labeling is as given in Figure \ref{fig:apendixxx}. This labeling is a generalization of the one we assumed for the potential summation throughout the paper with $R=W$.

\begin{figure}[H]
\centering
\includegraphics[width=.35\textwidth]{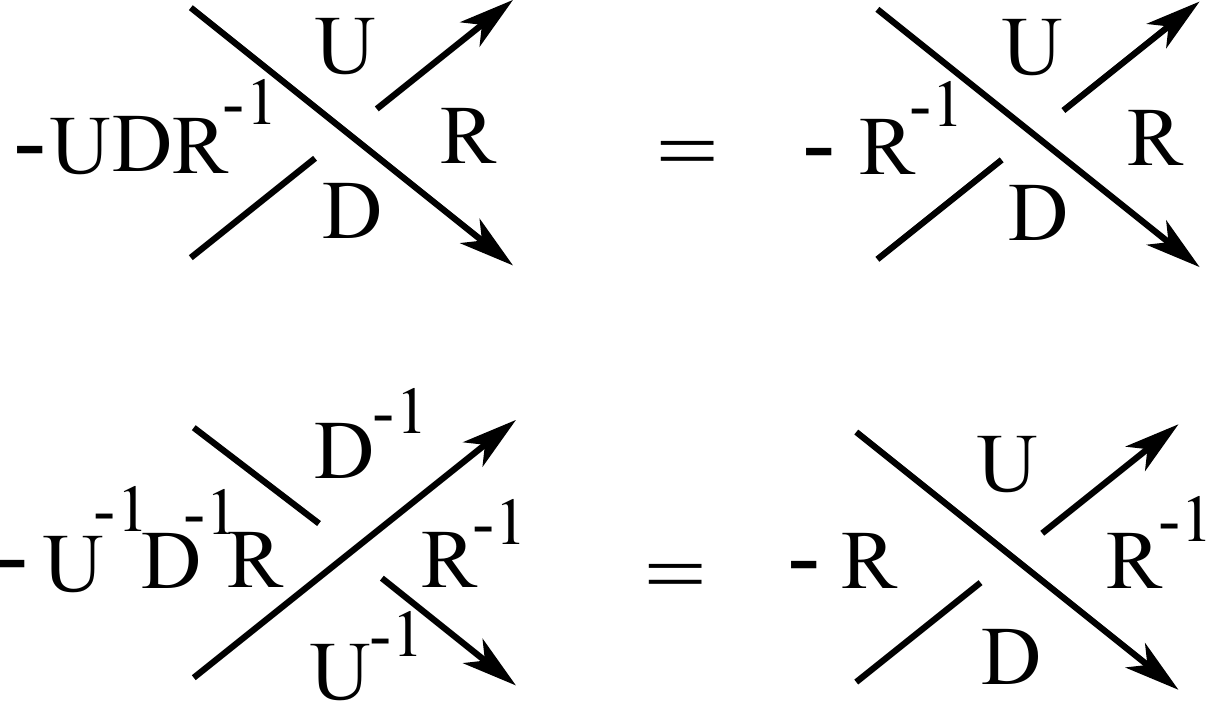}

\caption{Final labels at crossings.}
\label{fig:apendixxx}
\end{figure}

 Let $K$ be an oriented knot or knotoid diagram. Smoothing each crossing of $K$ in the oriented way results in a number of oriented circles (namely, Seifert circles) and a long segment if $K$ is a knotoid diagram. A Seifert circle that is oriented counterclockwise as a \textit{positive} circle and a circle that is oriented clockwise as \textit{negative} circle. 
 
 A generalization of the mock Alexander polynomial of $K$ with stars on a pair of its regions, can be defined as the state-sum over the labelings in Figure \ref{fig:apendixxx}. Let $\nabla_{K}(W,D)$ denote this state sum. Let $Deg(K) = p(K) - n(K)$, where $p(K)$ is the number of positive Seifert circles in an oriented smoothing of $K$ and $n(K)$ is the number of negative Seifert circles. Define $\tilde{\nabla}_{K}(W,D) = D^{-Deg(K)} \nabla_{K}(W,D)$. Then one can verify that $\tilde{\nabla}$ is an invariant of planar links and planar linkoids by using the same techniques  we used in Theorem \ref{thm:invariance}. We call $\tilde{\nabla}$ the \textit{planar potential of K}. We note here that the planar potential of a knot or link diagram with adjacent stars is a multiple of $\nabla_{K} (W)$ (the original potential function in variable $W$) by $D^{Deg(K)}$, but is more complex for other cases. The planar potential function will be discussed further in a subsequent paper. 
 
 Here we end with an example, given in Figure \ref{fig:last} that shows a knotoid $K$ and the knotoid $K!^{*}$ (obtained taking the reflection of $K$ through the vertical line in $\mathbb{R}^2$ and then taking the mirror image of the resulting knotoid by switching all the crossings) can be distinguished by the planar potential. More precisely, $Deg(K) = 0 = Deg (K!^{*})$ as the diagrams do not admit any Seifert circles and $\tilde{\nabla}_K (W, D ) =  W^2 + WD -W^{-1} D$ and $\tilde{\nabla}_{K!^{*}} (W, U)= W^2 + WU - W^{-1}U$, where $U =D^{-1}$.  Notice that $\nabla_{K} (W) = \nabla_{K!^{*}} (W)$ and also $K$ and $K!^{*}$ admit the same Jones polynomial. We have previously distinguished these two knotoids via an application of parity in \cite{GK2}. The method indicated here has general applicability due to the formula $\tilde{\nabla}_{K!^{*}} (W, D)=\tilde{\nabla}_K (W, U)$ holding for any knotoid $K.$\\
 
 \begin{figure}[H]
\centering
\includegraphics[width=.55\textwidth]{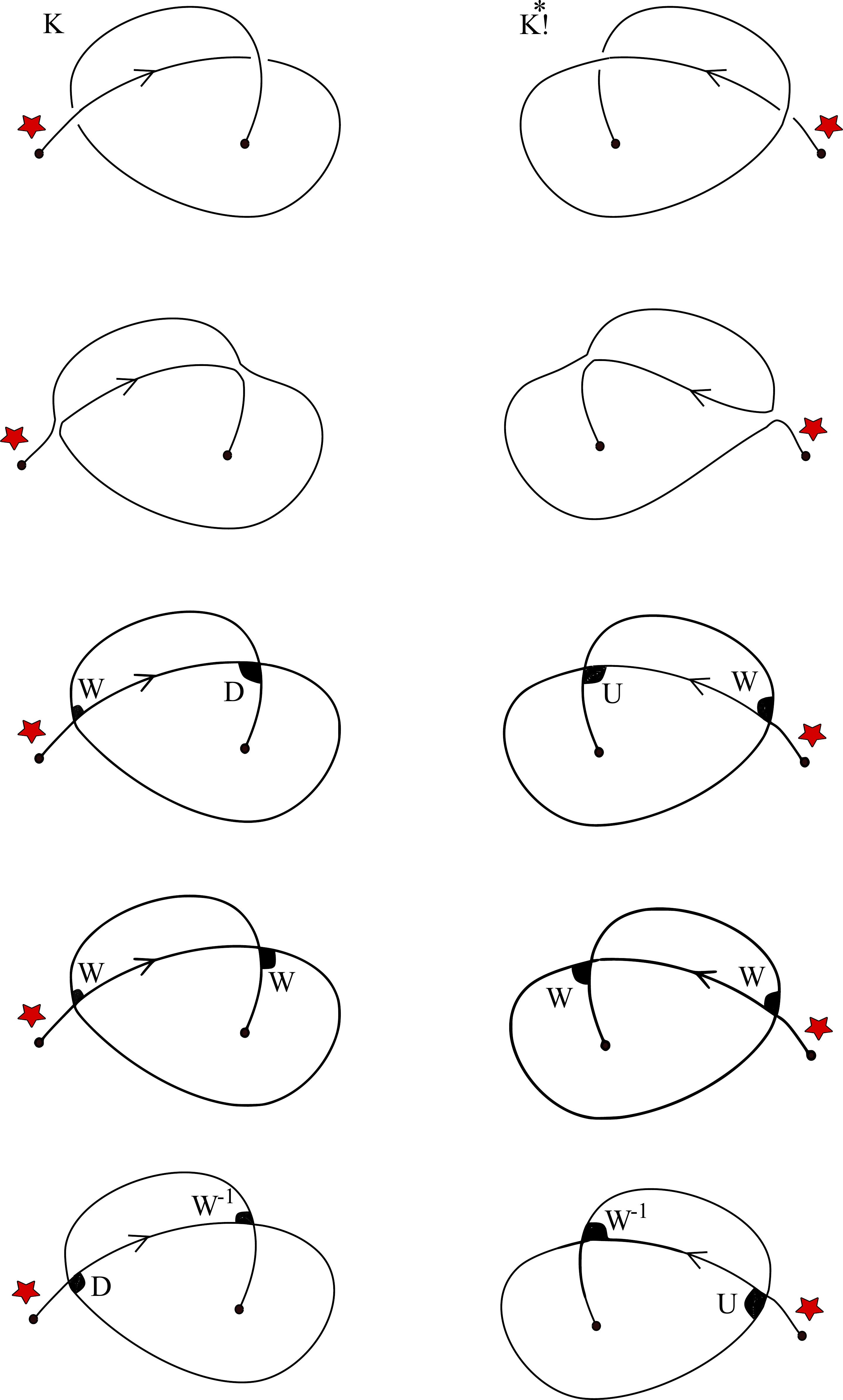}

\caption{A knotoid $K$ and its symmetric mirror image $K!^{*}$, and oriented smoothings of them}
\label{fig:last}
\end{figure}


\begin{thebibliography}{30}

\bibitem{Adams} Adams C., Bonat A., Chande M., Chen J., Jiang M., Romrell Z., Santiago D., Shapiro B.,Woodruff B. Generalizations of Knotoids and Spatial Graphs, \url{https://arxiv.org/abs/2209.01922}.

\bibitem{Alex} Alexander J. W. (1928). Topological invariants of knots and links. Transactions of the American Mathematical Society. 30 (2): 275–306.  

\bibitem{Levine} Baldwin J., Levine A. S., Advances in Mathematics, Volume 231, Issues 3–4, October–November 2012, Pages 1886-1939

\bibitem{Bat} Bataineh K.,  Hajij M.  Jones Polynomial for links in the handlebody. The Rocky Mountain Journal of Mathematics, Vol. 43, No. 3 (2013), pp. 737-753 (17 pages)

\bibitem{Dye} Dye H.A., Kauffman L.H. Minimal surface representations of virtual knots and links.  Algebr. Geom. Topol. 5 (2009) 509–535.

\bibitem{Fox1}Fox R.H. A quick trip through knot theory. In “Topology of 3-Manifolds”, ed. by M. K. Fort Jr., Prentice Hall (1962), 120-167.

\bibitem{Fox2} Crowell R.H , Fox R.H. Introduction to Knot Theory, Blaisdell
Pub. Co. (1963).




\bibitem{GK1} G\"ug\"umc\"u N., Kauffman L. New invariants of knotoids, 

\bibitem{GK2}  G\"ug\"umc\"u N., Kauffman L. Parity, virtual closure and minimality of knotoids.  Journal of Knot Theory and Its RamificationsVol. 30, No. 11, 2150076 (2021)

\bibitem{Woutpaper} G\"ug\"umc\"u N., Kauffman L.,  Monaghan J. E., Moltmaker W.  in preparation.

\bibitem{Jaeger} Jaeger F., Welsh D.,Vertigan D., On the Computational Complexity of the Jones and Tutte
polynomials, Proceedings of the Cambridge Philosophical Society 108 (1990), 5-53.

\bibitem{FKT} Kauffman L.H. Formal Knot Theory. Princeton University Press, Lecture Notes Series 30 (1983).

\bibitem{OnKnots} Kauffman L.H. On Knots. (AM-115), Volume 115.  Princeton University Press  Annals of Mathematics Studies (1988)

\bibitem{KrizKriz} Kriz D., Kriz I. , Advances in Mathematics Volume 255, 1 April 2014, Pages 414-454.

\bibitem{Lando} Lando S, Zvonkin K, Alexander K. (2004), Graphs on Surfaces and their Applications.

\bibitem{Ozsvath}Ozvath P., Szabo Z. Kauffman States, Bordered Algebras, and a Bigraded Knot Invariant. Advances in Mathematics, Volume 328, 13 April 2018, Pages 1088-1198.

\bibitem{Rolfsen} Rolfsen D. Knots and Links.  Publish or Perish Press (1987).

\bibitem{Turaev} Turaev V. Knotoids. Osaka J of Math. 49(1): 195-223 (March 2012).

\end{thebibliography}
\end{document}